\setlist{label={$$\roman{enumi}\kern1pt$)$}}
\newtheorem{thm}{Theorem}[section]
\newtheorem{prop}[thm]{Proposition}
\newtheorem{cor}[thm]{Corollary}
\newtheorem*{cor*}{Corollary}
\newtheorem{lema}[thm]{Lemma}
\newtheorem*{lema*}{Lemma}
\numberwithin{equation}{section}
\theoremstyle{definition}
\newtheorem*{Def}{Definition}
\newtheorem{Example}{Example}
\newtheorem*{obs}{Remark}
\newtheorem*{nota}{Notation}
\numberwithin{equation}{section}
\newcommand{\PI}[2]{\left\langle \,#1 , #2\, \right\rangle}
\newcommand{\K}[2]{\left[\,#1 , #2\, \right]}
\newcommand{\pas}{P_{W,\St}}
\newcommand{\cpas}{P_{W,\ol{\St}}}
\newcommand{\St}{\mathcal{S}}
\newcommand{\V}{\mathcal{V}}
\newcommand{\E}{\mathcal{E}}
\newcommand{\HH}{\mathcal{H}}
\newcommand{\KK}{\mathcal{K}}
\newcommand{\M}{\mathcal{M}}
\newcommand{\N}{\mathcal{N}}
\newcommand{\W}{\mathcal{W}}
\newcommand{\mc}[1]{\mathcal{#1}}
\newcommand{\T}{\mathcal{T}}
\newcommand{\ol}{\overline}
\newcommand{\clran}{\ol{\mathrm{ran}}\,}
\newcommand{\cldom}{\ol{\mathrm{dom}}\,}
\newcommand{\clmul}{\ol{\mathrm{mul}}\,}
\newcommand{\ra}{\rightarrow}
\DeclareMathOperator{\Sp}{Mp}
\DeclareMathOperator{\SP}{MP}
\DeclareMathOperator{\MP}{MP}
\newcommand{\PMN}{P_{\M,\N}}
\newcommand{\spl}{sp}
\newcommand{\WS}{\St(W)}
\newcommand{\WSb}{\St^\bot(W)}
\newcommand{\matriz}[4]{\displaystyle\
	\left(
	\begin{array}{cc}
		{#1}&{#2}\\
		{#3}&{#4}
	\end{array}
	\right)}
\newcommand{\vecdm}[2]{\displaystyle\
	\begin{array}{cc}
		{#1}\\{#2}
\end{array}}
\DeclareMathOperator{\ran}{ran}
\DeclareMathOperator{\dom}{dom}
\DeclareMathOperator{\mul}{mul}
\newcommand{\pl}{{\mathbin{/\mkern-3mu/}}}
\newcommand{\lr}{\mathrm{lr}}
\begin{document}


\baselineskip=17pt


\title[Matrix representations of multivalued projections and LS problems]{Matrix representations of multivalued projections and least squares problems}

\author[M. L. Arias]{M. Laura Arias}
\address{Instituto Argentino de Matemática Alberto P. Calderón- CONICET\\
Buenos Aires, Argentina \\ and\\ Depto. de Matemática, Facultad de Ingeniería, Universidad de Buenos Aires\\
Buenos Aires, Argentina}
\email{lauraarias@conicet.gov.ar}

\author[M. Contino]{Maximiliano Contino}
\address{Depto. de An\'alisis Matem\'atico, Facultad de Matem\'aticas, Universidad Complutense de Madrid, Madrid, Spain \\ and \\ Instituto Argentino de Matemática Alberto P. Calderón- CONICET\\
	Buenos Aires, Argentina \\ and \\ Depto. de Matemática, Facultad de Ingeniería, Universidad de Buenos Aires\\
	Buenos Aires, Argentina}
\email{mcontino@fi.uba.ar }

\author[A. Maestripieri]{Alejandra Maestripieri}
\address{Instituto Argentino de Matemática Alberto P. Calderón- CONICET\\
	Buenos Aires, Argentina \\ and \\ Depto. de Matemática, Facultad de Ingeniería, Universidad de Buenos Aires\\
	Buenos Aires, Argentina}
\email{amaestri@fi.uba.ar}

\author[S. Marcantognini]{Stefania Marcantognini}
\address{Instituto Argentino de Matemática Alberto P. Calderón- CONICET\\
	Buenos Aires, Argentina\\ and \\
	Universidad Nacional de General Sarmiento – Instituto de Ciencias\\
	Los Polvorines, Pcia. de Buenos Aires, Argentina}
\email{smarcantognini@ungs.edu.ar}

\begin{abstract} Multivalued projections are applied to the study of weighted least squares solutions of linear relations equations (or inclusions) and some of its applications. 
	To this end a matrix representation of multivalued projections with respect to the closure of their ranges is described. 
\end{abstract}

\maketitle


\renewcommand{\thefootnote}{}

\footnote{2020 \emph{Mathematics Subject Classification}: Primary 47A06; 47A58; 47A64.}

\footnote{\emph{Key words and phrases}:  linear relations, multivalued projections, least squares problems, smoothing problems.}

\renewcommand{\thefootnote}{\arabic{footnote}}
\setcounter{footnote}{0}


\section{Introduction}
In many approximation problems, associated spaces often decompose as the sum of two subspaces which are not complementary (in the sense that their intersection is nontrivial). 
Examples of this include least squares problems with a positive weight \cite{Elden, CM} or selfadjoint weight  \cite{Hassibi1, Hassibi2, patel2002solving, giribet2010indefinite}, where linear estimation in Krein spaces with applications to $\HH^{\infty}$-estimation and control theory are considered. Positive definite and indefinite abstract splines also exhibit this behaviour \cite{Atteia, CMS, delvos1978splines} and \cite{Canu1,  giribet2010indefinite2}, as do their regularized versions (known as smoothing problems) \cite{Tikhonov}. Other examples occur in sampling and reconstruction  \cite{dvorkind2009robust}, as well as in machine learning \cite{Canu2}, to list just a few.

Most of these problems were studied using matrix and operator techniques.
However, multivalued projections provide for instance a more natural way of describing weighted least squares solutions of equations. 
A \emph{multivalued projection}, or  \emph{semi-projection}, $E$ acting on $\HH$ is a subspace of $\HH \times \HH$ (commonly referred to as a \emph{linear relation}) which is \emph{idempotent} ($E^2=E$) and with the property that $\ran E$ (the range of $E$) is included in $\dom E$ (the domain of $E$). Any multivalued projection $E$ is completely determined by $\ran E$ and  $\ker E$ (the kernel of $E$); its multivalued part and domain being $\ran E \cap \ker E,$  $\ran E + \ker E,$ respectively. A detailed account of multivalued projections can be found in \cite{Cross1}, \cite{Cross} and \cite{Labrousse}.

When looking for least squares solutions of the equation $Ax=b,$ for a given vector $b\in \HH,$ one can consider $A$ as a relation in $\HH \times \HH$ instead of an operator. In this case, $Ax$ is a linear manifold and the equation ``$Ax=b$'' becomes the relation equation or \emph{the inclusion} ``$b \in Ax$''.	The study of least squares solutions of inclusions with a weight then provides a general framework for all of the previous problems. 

Least squares solutions of inclusions were studied by Z. Nashed \cite{LN} and T. \'Alvarez \cite{TA} and these sorts of approximation problems appear in control theory \cite{Lee, lee1989constrained}. Here, we consider least squares solutions of inclusions with a positive weight. More precisely, given a linear relation $A$ in a Hilbert space $\HH,$ a vector $b \in \HH,$ and a positive bounded linear operator  $W$  acting on $\HH,$ the vector $x_0$ is a weighted least squares solution of the inclusion $b \in Ax$ if $x_0 \in \dom A$ and there exists $z \in Ax_0$ such that
\begin{equation}\label{ALSSintro}
	||z-b||_{W}= \underset{y \in \ran A}{\min} \ \ ||y-b||_{W},
\end{equation}
where $\Vert \cdot \Vert_W$ is the semi-norm arising from the (semi-definite) inner product $\PI{W\cdot}{\cdot}.$  To analize \eqref{ALSSintro} it would be useful to have a $W$-orthogonal projection onto $\ran A.$ But,  with respect to this inner product, the orthogonal companion of a subspace $\St$ need not be a complement, since the sum may not be the whole space, and even when it is, it may not be direct. 
All this information is encoded in a multivalued projection $\pas$ that depends on $\St$ and the weight $W.$

In dealing with \eqref{ALSSintro}, it is helpful to have a certain matrix representation for multivalued projections; namely, a matrix representation with respect to the closure of their ranges. A bounded projection operator $E$ always admits the matrix representation with respect to the decomposition $\HH=\ran E \oplus\ran E^{\perp}$
\begin{equation}\label{mstandardintro}
	E=\matriz{I}{x}{0}{0},
\end{equation}
where  $x: \ran E^{\perp} \ra \ran E$ is a bounded operator. 
Multivalued projections may not have such matrix representations and even when they do, they may not be unique. However, under certain conditions, a mutivalued projection does admit a representation with respect to the decomposition $\HH=\clran E\oplus (\ran E)^{\perp}$ that generalizes \eqref{mstandardintro}. In particular, if $\ran E$ is closed this representation always exists and  it reduces to \eqref{mstandardintro} where $x$ is a linear relation in $\ran E^{\perp} \times \ran E.$ 

In what follows, notation and background material on linear relations are given in Section \ref{two}. Section \ref{three} starts by looking at those multivalued projections which can be represented as $2\times 2$ matrices with respect to the closures of their ranges, and as before, the $(1,2)-$coefficients play a determining role. Thus we get a similar representation to (\ref{mstandardintro}) as well as a version in our context of Ando’s formula for the $(1,2)-$coefficient \cite{Ando}.  These  results are extended to the linear relations $E$ such that $E\subseteq E^2$, the so-called \emph{super-idempotents} \cite{idem}. In turn the idempotents admitting such a representation are also characterized.

Section \ref{fourth} is devoted to describing the multivalued projection $\pas,$ where $W$ is a selfadjoint bounded operator acting on $\HH,$ $\St$ is a subspace of $\HH$ and $\pas$ is the multivalued projection with range $\St$ and kernel the  $W$-orthogonal companion of $\St.$ 
A notion that is useful in this context is that of \emph{complementability} of $W$ with respect to $\St,$ defined by Ando for matrices \cite{AndoC} and extended to operators  in  \cite{Szeged} and \cite{bilateral}. Complementability is equivalent to $\pas$ having domain the whole space. Basic properties of $\pas$ include that it is $W$-symmetric and is $W-$selfadjoint whenever $W$ is $\St-$complementable.  When $\St$ is closed, $\pas$ has a representation as in \eqref{mstandardintro} and the $(1,2)-$coefficient gives a criterion for the $\mathcal S-$complementability of $W;$ explicitly, $W= \left(\begin{smallmatrix} a & b \\ b^* & c \end{smallmatrix}\right)$ is $\mathcal S-$complementable if and only if $ax=b$
or equivalently $x=a^{-1}b$ where $a^{-1}$ is the inverse of $a$ as a relation. Also, when $W$ is positive semi-definite and $\mathcal S-$complement\-able, the formula $W(I - P_{W,\mathcal S})$ for the Schur complement of $W$ to $\St^{\perp}$ is obtained.

In Section \ref{fifth} we turn our attention to the weighted least squares solutions of the inclusion problem \eqref{ALSSintro}. The study is carried out using the multivalued projection $P_{W, \mathcal S}$ with $W$ the weight and $\mathcal S := \mbox{\rm ran }A$. There is then a solution to \eqref{ALSSintro} if and only if $b \in \dom \pas.$  Since the multivalued part of $\pas$ is $\St \cap \ker W,$ the relation $W^{1/2}(I-\pas)$ turns out to be an operator and, when $b \in \dom \pas$, the minimum in \eqref{ALSSintro} is given by $\Vert W^{1/2}(I-\pas)b \Vert.$ In particular, there is a solution to (\ref{ALSSintro}) for all $b\in \HH$ if and only if $W$ is $\St-$complementable. We prove that many of the classical results of least squares solutions of equations have an analogue for inclusions. For example, if $b \in \mbox{\rm dom }P_{W, \mathcal S}$, the set of weighted least squares solutions of the inclusion $b \in Ax$ is $x_0 + A^{-1}\mbox{\rm ker } W$ with $x_0$ any particular solution. Also, least squares solutions satisfy a normal equation similar to the classical one, in terms of relations. Finally the analysis of the weighted least squares inclusion problem is applied to the abstract spline problem and the associated Tikhonov regularization or smoothing problem.

\section{Preliminaries} \label{two}
\label{sec:preliminaries}

Throughout, $\HH,$ $\KK$ and  $\mc E$  are complex and separable Hilbert spaces.  The orthogonal   sum of two subspaces $\M$ and $\N$ of a Hilbert space $\HH$ is $\M \oplus \N.$ 
The orthogonal complement of a subspace $\mc{M} \subseteq \HH$ is written as $\mc{M}^\perp,$ or $\HH \ominus \M$ interchangeably. If $\M$ is closed,  $P_\M$ denotes the
orthogonal projection onto $\M.$

We frequently use the following result \cite[Theorem 13]{Deutsch}.
\begin{prop}\label{Deutch} Let $\M, \N$ be closed subspaces of $\HH$ then $\M+\N$ is closed if and only if $\M^{\perp}+\N^{\perp}$ is closed.
\end{prop}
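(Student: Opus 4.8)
The plan is to transfer the statement into a spectral assertion about the single positive operator $A:=P_\M+P_\N$, and then to exploit a hidden symmetry of $\sigma(A)$ about the point $1$. Throughout write $P:=P_\M$ and $Q:=P_\N$ for the orthogonal projections onto the (closed) subspaces $\M$ and $\N$.

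First I would realize both sums as operator ranges. Consider the bounded operator $D\colon\HH\oplus\HH\to\HH$ given by $D(x,y)=Px+Qy$, so that $\ran D=\ran P+\ran Q=\M+\N$. A direct computation gives $D^*h=(Ph,Qh)$ and hence $DD^*=P+Q=A$. Recalling that a bounded operator $D$ has closed range if and only if $DD^*$ has closed range, we get that $\M+\N$ is closed if and only if $\ran A$ is closed. Applying the same device to $I-P$ and $I-Q$ shows, in exactly the same way, that $\M^\perp+\N^\perp$ is closed if and only if $\ran(2I-A)$ is closed, since $(I-P)+(I-Q)=2I-A$. Thus the proposition reduces to a single assertion about $A=P+Q$: the range of $A$ is closed if and only if the range of $2I-A$ is closed; equivalently, $\sigma(A)$ has a gap immediately to the right of $0$ if and only if it has a gap immediately to the left of $2$.

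The heart of the matter is therefore a symmetry of $\sigma(A)$ about $1$. Set $B:=A-I=P+Q-I$ and $C:=P-Q$, both self-adjoint. A short calculation gives $B^2+C^2=I$ and, crucially, $BC=-CB$, so $C$ intertwines $B$ with $-B$. Consequently $CE_B(\Omega)=E_B(-\Omega)C$ for every Borel set $\Omega$, where $E_B$ is the spectral measure of $B$. One also checks that $\ker C=(\M\cap\N)\oplus(\M^\perp\cap\N^\perp)$ and that $B$ equals $+I$ on $\M\cap\N$ and $-I$ on $\M^\perp\cap\N^\perp$; hence $E_B(\Omega)$ annihilates $\ker C$ whenever $\Omega\subseteq(-1,1)$, so $\ran E_B(\Omega)\subseteq(\ker C)^\perp$, on which $C$ is injective. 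Combining this with the intertwining relation forces the implication: if $\lambda\in\sigma(B)\cap(-1,1)$ then $-\lambda\in\sigma(B)$ as well. In other words $\sigma(B)\cap(-1,1)$ is symmetric about $0$, i.e.\ $\sigma(A)\cap(0,2)$ is invariant under $\lambda\mapsto 2-\lambda$.

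Finally I would run the gap argument. If $\ran A$ is not closed, then $0\in\sigma(A)$ is not isolated, so there exist $\mu_n\in\sigma(A)$ with $\mu_n\to 0^+$; for large $n$ these lie in $(0,2)$, and by the symmetry just established $2-\mu_n\in\sigma(A)$ with $2-\mu_n\to 2^-$, so $2$ is not isolated in $\sigma(A)$ and $\ran(2I-A)$ is not closed either. The reverse implication is identical, so $\ran A$ is closed iff $\ran(2I-A)$ is closed, which by unwinding the two reductions is precisely the proposition. The main obstacle is the middle step, namely proving the spectral symmetry rigorously: the delicate point is to keep the relevant spectral subspaces away from $\ker C$ near the extreme values $0$ and $2$, which is exactly why the decomposition $\ker C=(\M\cap\N)\oplus(\M^\perp\cap\N^\perp)$ and the injectivity of $C$ on its orthogonal complement are needed.
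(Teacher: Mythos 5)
Your argument is correct, but it takes a genuinely different route from the paper: the paper does not prove this proposition at all, it quotes it as \cite[Theorem 13]{Deutsch}, where the result is obtained from the theory of angles between subspaces --- one shows that $\M+\N$ is closed if and only if the Friedrichs angle satisfies $c(\M,\N)<1$, and combines this with the duality $c(\M,\N)=c(\M^{\perp},\N^{\perp})$. You instead encode everything in the single positive operator $A=P_\M+P_\N$: closedness of $\M+\N$ is reduced to closedness of $\ran A$ (via the standard fact that $\ran D$ is closed iff $\ran DD^*$ is closed, applied to $D(x,y)=P_\M x+P_\N y$), and likewise $\M^{\perp}+\N^{\perp}$ corresponds to $\ran(2I-A)$; then the anticommutation $BC=-CB$ for $B=A-I$ and $C=P_\M-P_\N$ yields the spectral symmetry of $\sigma(A)\cap(0,2)$ about $1$, and the gap characterization of closed range for self-adjoint operators finishes the proof. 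I checked the delicate points and they hold: $B^2+C^2=I$ and $BC=-CB$ are correct computations; $\ker C=(\M\cap\N)\oplus(\M^{\perp}\cap\N^{\perp})$ is right (if $P_\M x=P_\N x=:y$, then $y\in\M\cap\N$ and $x-y\in\M^{\perp}\cap\N^{\perp}$); and since $B=\pm I$ on the two summands of $\ker C$, every spectral projection $E_B(\Omega)$ with $\Omega\subseteq(-1,1)$ has range orthogonal to $\ker C$, so the intertwining $CE_B(\Omega)=E_B(-\Omega)C$ together with injectivity of $C$ on $(\ker C)^{\perp}$ indeed forces $\sigma(B)\cap(-1,1)$ to be symmetric about $0$. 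As for what each approach buys: the angle proof is quantitative and elementary (it gives the equality of angles, hence norm estimates, with no functional calculus), while your proof, at the price of invoking the spectral theorem and Borel functional calculus, establishes a stronger structural fact --- the full symmetry of $\sigma(P_\M+P_\N)$ on the open interval $(0,2)$ --- of which the closedness duality is just the statement about gaps at the two endpoints.
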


\begin{lema} \label{lemasubs} Let $\M$ and $\St$ be subspaces of $\HH$ with $\St$ closed.
	Then the following are equivalent:
	\begin{enumerate}
		\item $P_{\St} (\M) \subseteq \M;$
		\item  $\M=\St \cap \M \oplus  \St^{\perp}\cap \M;$
		\item $P_{\St}(\M)=\St \cap \M.$ 
	\end{enumerate}
\end{lema}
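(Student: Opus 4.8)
The plan is to prove the three statements equivalent by establishing the cycle (i) $\Rightarrow$ (ii) $\Rightarrow$ (iii) $\Rightarrow$ (i). The only structure I would use is the orthogonal decomposition $\HH=\St\oplus\St^\perp$ (available because $\St$ is closed, so $P_\St$ is a well-defined bounded projection with $I-P_\St=P_{\St^\perp}$) together with the purely algebraic fact that $\M$, being a linear manifold, is closed under differences. Before starting the cycle I would note that the sum appearing in (ii) is automatically \emph{orthogonal and direct}: its two summands lie in $\St$ and $\St^\perp$ respectively, so $\St\cap\M$ and $\St^\perp\cap\M$ meet only in $\{0\}$ and are mutually orthogonal. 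This justifies the symbol $\oplus$ and means (ii) is really an assertion about a single set equality.

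For (i) $\Rightarrow$ (ii), the inclusion $\St\cap\M+\St^\perp\cap\M\subseteq\M$ is immediate since both summands sit inside $\M$. For the reverse inclusion I would take $x\in\M$ and split it as $x=P_\St x+(I-P_\St)x$. The hypothesis $P_\St(\M)\subseteq\M$ gives $P_\St x\in\M$, and since also $P_\St x\in\St$ we get $P_\St x\in\St\cap\M$; then $(I-P_\St)x=x-P_\St x$ lies in $\M$ because $\M$ is a subspace, and it lies in $\St^\perp$, so $(I-P_\St)x\in\St^\perp\cap\M$. Hence $x$ belongs to the right-hand side, proving (ii).

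For (ii) $\Rightarrow$ (iii) I would argue both inclusions. The inclusion $\St\cap\M\subseteq P_\St(\M)$ holds because any $y\in\St\cap\M$ satisfies $P_\St y=y$ with $y\in\M$, so $y\in P_\St(\M)$. For $P_\St(\M)\subseteq\St\cap\M$, take an arbitrary $x\in\M$ and use the decomposition furnished by (ii) to write $x=u+v$ with $u\in\St\cap\M$ and $v\in\St^\perp\cap\M$; applying $P_\St$ gives $P_\St x=u\in\St\cap\M$. Finally (iii) $\Rightarrow$ (i) is trivial: $P_\St(\M)=\St\cap\M\subseteq\M$.

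I do not expect any genuine obstacle here, since every step is algebraic. The one point worth flagging is that $\M$ is \emph{not} assumed closed; this is harmless because the arguments use only that $\M$ is a linear manifold (closed under subtraction) and that $P_\St$ is bounded. The closedness hypothesis is needed solely on $\St$, to guarantee the existence of the orthogonal projection $P_\St$ driving the whole proof.
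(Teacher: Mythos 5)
Your proof is correct: each implication in the cycle (i) $\Rightarrow$ (ii) $\Rightarrow$ (iii) $\Rightarrow$ (i) is valid, and the observation that only the closedness of $\St$ (not of $\M$) is needed, since the argument uses just the decomposition $x=P_\St x+(I-P_\St)x$ and the fact that $\M$ is a linear manifold, is exactly right. The paper itself dismisses this lemma with the single word ``Straightforward,'' and your argument is precisely the routine verification the authors had in mind, so there is nothing to compare beyond noting that you have supplied the details they omitted.
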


\begin{proof} Straightforward.
\end{proof}

We  consider the standard inner product on $\HH \times \KK$ 
$$\PI{(h,k)}{(h',k')}=\PI{h}{h'}+\PI{k}{k'}, \ (h,k), (h',k') \in \HH \times \KK,$$
with the associated norm  $\Vert (h,k)\Vert=(\Vert h \Vert^2+\Vert k \Vert^2)^{1/2}.$

A linear relation from  $\HH$ into $\KK$ is a linear subspace of the Cartesian product $\HH \times \KK.$ The set of linear relations from $\HH$ into $\KK$ is denoted by $\lr(\HH,\KK),$ and $\lr(\HH)$ when $\HH=\KK.$ 
The domain,  range, kernel or nullspace and multivalued part of $T\in \lr(\HH,\KK)$ are denoted by $\dom T,$  $\ran T,$ $\ker T$ and  $\mul T:=\{y \in \KK: (0,y) \in T\},$ respectively. When $\mul T=\{0\},$ $T$ is (the graph of) an operator. 

The space of bounded linear operators from $\HH$ to $\KK$ is written as $L(\HH, \KK),$ or $L(\HH)$ when $\HH=\KK.$

The next lemma was stated by Arens \cite[2.02]{Arens}. See also \cite[Proposition 1.21]{Labrousse}.
\begin{lema}\label{lemalr} Let $S, T \in \lr(\HH,\KK).$ Then $S=T$ if and only if $S\subseteq T,$ $\dom T\subseteq\dom S$ and $\mul T \subseteq \mul S.$
\end{lema}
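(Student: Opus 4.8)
The plan is to prove only the nontrivial implication, since the converse is immediate: if $S=T$ then $S\subseteq T$, $\dom S=\dom T$ and $\mul S=\mul T$, so in particular all three conditions hold. Assume henceforth that $S\subseteq T$, $\dom T\subseteq\dom S$ and $\mul T\subseteq\mul S$; it then suffices to establish the reverse inclusion $T\subseteq S$.

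First I would fix an arbitrary $(h,k)\in T$ and aim to produce a partner element of $S$ sharing its first coordinate. Since $h\in\dom T\subseteq\dom S$, there is some $k'\in\KK$ with $(h,k')\in S$; and because $S\subseteq T$, this same pair lies in $T$ as well. The point of this step is that the domain inclusion lets me lift the given $h$ to an actual element of $S$.

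Next I would exploit the linear (subspace) structure of the relations. Subtracting $(h,k')$ from $(h,k)$ inside $T$ gives $(0,k-k')\in T$, so by definition $k-k'\in\mul T\subseteq\mul S$, that is, $(0,k-k')\in S$. Adding this back to $(h,k')\in S$ yields
\[
(h,k)=(h,k')+(0,k-k')\in S,
\]
again using that $S$ is a subspace. As $(h,k)\in T$ was arbitrary, $T\subseteq S$, and combined with the hypothesis $S\subseteq T$ this gives $S=T$.

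There is no genuine obstacle here; the whole argument is bookkeeping of the three hypotheses, and I expect the proof in the paper to be correspondingly short. The only thing to keep straight is the role of each assumption: the domain inclusion supplies a competitor $(h,k')\in S$ above a given $h$, the multivalued inclusion repairs the discrepancy $k-k'$ in the second coordinate, and the subspace property of $S$ together with $S\subseteq T$ is what glues these two steps into the conclusion.
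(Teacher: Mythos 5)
Your proof is correct and complete: the domain inclusion supplies a pair $(h,k')\in S$ over any given $h\in\dom T$, the inclusion $\mul T\subseteq\mul S$ absorbs the discrepancy $(0,k-k')$, and linearity of $S$ assembles these into $(h,k)\in S$, which is exactly the right bookkeeping. Note that the paper itself offers no proof of this lemma---it is quoted from Arens (2.02) and Labrousse (Proposition 1.21)---and your argument is precisely the standard one behind those references, so there is nothing to reconcile.
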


Consider $T, S \in \lr(\HH,\KK).$ Denote by $T \cap \St$  and $T \ \hat{+}  \ S$ the usual intersection and sum of $T$ and $S$ as subspaces, respectively.
In particular, $\mul(T  \cap  S)=\mul T\cap \mul S$ and $\ker(T  \cap  S)=\ker T \cap \ker S,$  $\dom(T \ \hat{+}  \ S)=\dom T+\dom S$ and $\ran(T \ \hat{+}  \ S)=\ran T+\ran S.$ 
We write  $T \hat{\oplus} S$ if $T \subseteq S^{\perp}.$ 

The sum of $T$ and $S$ is the linear relation defined by
$$T+S:=\{(x,y+z): (x,y ) \in T \mbox{ and } (x,z) \in S\}.$$
In particular, $\dom(T+S)=\dom T \cap \dom S$ and $\mul(T+S)=\mul T+ \mul S.$

If $R \in \lr(\KK,\mc E)$  the product $RT$ is the linear relation from $\HH$ to $\mc E$ defined by
$$RT:=\{(x,y): (x,z) \in T \mbox{ and } (z,y) \in R \mbox{ for some } z \in \mc{K}\}.$$ Then $-T=-IT=\{(u,-v): (u,v) \in T\}.$

The inverse of $T$ is the linear relation defined by $T^{-1}:=\{(y,x) : (x,y) \in T\}$ so that $\dom T^{-1}=\ran T$ and $\mul T^{-1} =\ker T.$

Given a subspace $\M$ of $\HH$, $I_\M:=\{(u,u): u\in\M\}$ and $0_\M:=\M \times \{0\}.$ When $\M=\HH$ we write $I$ and $0$ instead. 

Set
$$
T(\mc{M}):=\{y : (x,y) \in T \mbox{ for some } x \in \mc{M} \},
$$  in particular, for  $x \in \HH,$ $Tx:=T(\{x\})=\{y\in \KK: (x,y)\in T\}$. It holds that  $Tx=y+\mul T$ for any $(x,y) \in T.$ 

Write
$$T|_{\M}:=\{(x,y) \in T : x \in \M\}=T\cap (\M\times \HH).$$
Then
\begin{equation} \label{Productres}
	(ST)|_{\M}=S(T|_{\M}).
\end{equation}

\begin{lema} \label{propbasicas} Let $T \in \lr(\HH,\mc K),$ $R\in \lr(\mc K,\mc E).$ Then
	\begin{enumerate}
		\item[1.] $\ran(RT)=R(\ran T).$
		\item[2.] $\mul(RT)=R(\mul T).$
	\end{enumerate} 
\end{lema}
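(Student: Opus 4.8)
The plan is to deduce both items from the single general identity
\begin{equation}\label{genimage}
(RT)(\M) = R(T(\M)), \qquad \M \subseteq \HH \ \text{a subspace,}
\end{equation}
and then specialize. Taking $\M = \HH$ yields item 1, since $\ran(RT) = (RT)(\HH)$ and $\ran T = T(\HH)$; taking $\M = \{0\}$ yields item 2, since $\mul(RT) = (RT)(\{0\})$ and $\mul T = T(\{0\})$ by the very definition of the multivalued part recalled in Section \ref{two}. Thus no separate argument for the two items is really needed once \eqref{genimage} is in hand.

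To prove \eqref{genimage} I would argue by a chain of equivalences on membership, tracking the intermediate variable carefully. By definition of the product, $(x,y) \in RT$ holds precisely when there exists $z \in \KK$ with $(x,z) \in T$ and $(z,y) \in R$. Hence $y \in (RT)(\M)$ if and only if there exist $x \in \M$ and $z \in \KK$ with $(x,z) \in T$ and $(z,y) \in R$. On the other side, $y \in R(T(\M))$ if and only if there is some $z \in T(\M)$ with $(z,y) \in R$, and in turn $z \in T(\M)$ if and only if there is $x \in \M$ with $(x,z) \in T$. Splicing these two existential statements gives exactly the same condition: there exist $x \in \M$ and $z \in \KK$ with $(x,z) \in T$ and $(z,y) \in R$. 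The two characterizations coincide, which establishes \eqref{genimage}.

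There is no substantive obstacle here: the entire argument is bookkeeping of the existential quantifier on the intermediate variable $z$, and the only point deserving minimal attention is to confirm that the witness $z$ ranging over all of $\KK$ in the product matches the witness implicit in forming the image $T(\M)$. Alternatively one may verify items 1 and 2 independently by the identical unwinding, reading item 2 as the restriction of the item 1 argument to inputs $x = 0$; packaging both through \eqref{genimage} simply avoids the repetition.
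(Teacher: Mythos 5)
Your proof is correct, and it is essentially the argument the paper has in mind: the paper states this lemma without proof, treating it as a routine unwinding of the definitions of product, image, range, and multivalued part, which is exactly what you carry out. Your packaging of both items through the single identity $(RT)(\mathcal{M}) = R(T(\mathcal{M}))$, specialized to $\mathcal{M}=\mathcal{H}$ and $\mathcal{M}=\{0\}$, is a clean and valid way to organize that verification.
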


The \emph{closure} $\ol{T}$ of $T$ is the closure of the  subspace $T$ in $\HH \times \KK.$ 
The relation $T$ is  \emph{closed} when it is closed as a subspace of $\HH \times \KK.$ 
The \emph{adjoint} of $T\in \lr(\HH,\KK)$ is the linear relation from $\KK$ to $\HH$ defined by
$$T^*:=\{(x,y) \in \KK \times \HH: \PI{g}{x}=\PI{f}{y} \mbox{ for all } (f,g) \in T \}.$$ It holds that
$$T^*=JT^{\perp}=(JT)^{\perp},$$ where $J(x,y)=i(-y,x).$ 
The adjoint of $T$
is  a closed linear relation,  $\ol{T}^*=T^*$ and $T^{**}:=(T^*)^*=\ol{T}.$ It holds that $\mul T^* =(\dom T)^{\perp}$ and $\ker T^* =(\ran T)^{\perp}.$ Therefore, if $T$ is closed both $\ker T$ and $\mul T$ are closed subspaces.

If $T \in \lr(\HH,\mc K)$ and $R\in \lr(\mc K,\mc E)$  then
\begin{equation} \label{product}
	T^*R^* \subseteq (RT)^*	
\end{equation}

\begin{lema}[{\cite[Lemma 2.9]{Derkach}}] \label{productolema} If $T \in \lr(\HH,\mc K)$ and $R \in \lr(\mc{K}, \mc{E})$ is closed with closed domain and $\ran T \subseteq \dom R$  then there is equality in \eqref{product}.
	In particular, if $R \in L(\mc{K}, \mc{E})$ there is equality in \eqref{product}.
\end{lema}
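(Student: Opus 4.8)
The inclusion $T^*R^* \subseteq (RT)^*$ is exactly \eqref{product}, so the plan is to establish only the reverse inclusion $(RT)^* \subseteq T^*R^*$. Fix $(u,v) \in (RT)^*$, that is, $u \in \mc E$, $v \in \HH$ and $\PI{y}{u} = \PI{x}{v}$ for every $(x,y) \in RT$. By the definition of the product of relations it suffices to produce an intermediate vector $w \in \mc K$ with $(u,w) \in R^*$ and $(w,v) \in T^*$, for then $(u,v) \in T^*R^*$. Constructing $w$ is the heart of the matter, and it is there that the standing hypotheses on $R$ enter.

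First I would check that $u \perp \mul R$: since $0 \in \ran T \subseteq \dom R$ (from $(0,0)\in T$), for each $g \in \mul R$ one has $(0,g)\in R$ and hence $(0,g) \in RT$, so the defining condition of $(RT)^*$ gives $\PI{g}{u} = \PI{0}{v} = 0$. The key step is then to exhibit $w$ with $(u,w)\in R^*$, equivalently to show $u \in \dom R^*$. Because $R$ is closed, $\mul R$ is closed and $R$ splits as $R = R_{\mathrm{op}} \,\hat{+}\, (\{0\}\times \mul R)$, where $R_{\mathrm{op}} := R \cap (\mc K \times (\mul R)^{\perp})$ is a closed single-valued relation with $\dom R_{\mathrm{op}} = \dom R$. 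Since $\dom R$ is closed, $R_{\mathrm{op}}$ is a closed operator defined on the whole Hilbert space $\dom R$, so by the closed graph theorem it is bounded. Setting $w := R_{\mathrm{op}}^* u \in \dom R$ and writing each $(a,b)\in R$ as $b = R_{\mathrm{op}} a + g$ with $g \in \mul R$, the relation $u \perp \mul R$ yields $\PI{b}{u} = \PI{R_{\mathrm{op}} a}{u} = \PI{a}{w}$, which is precisely $(u,w)\in R^*$. This existence of $w$ is the only place where both the closedness of $R$ and the closedness of its domain are used (through the boundedness of $R_{\mathrm{op}}$), and I expect it to be the main obstacle; the remainder is formal.

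It then remains to verify $(w,v)\in T^*$, i.e. $\PI{d}{w} = \PI{c}{v}$ for all $(c,d)\in T$, and this works for any $w$ with $(u,w)\in R^*$. Given $(c,d)\in T$, the inclusion $\ran T \subseteq \dom R$ supplies $e$ with $(d,e)\in R$, whence $(c,e)\in RT$; applying $(u,v)\in (RT)^*$ gives $\PI{e}{u}=\PI{c}{v}$, while $(u,w)\in R^*$ applied to $(d,e)\in R$ gives $\PI{e}{u}=\PI{d}{w}$. Comparing the two identities yields $\PI{d}{w}=\PI{c}{v}$, so $(w,v)\in T^*$ and therefore $(u,v)\in T^*R^*$, completing the reverse inclusion and hence the equality. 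Finally, the particular case $R \in L(\mc K,\mc E)$ is subsumed, since then $R$ is closed, $\dom R = \mc K$ is closed, and $\ran T \subseteq \mc K = \dom R$ holds automatically, so equality in \eqref{product} follows at once.
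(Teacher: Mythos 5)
Your proof is correct. Note, however, that the paper offers no proof of this lemma at all: it is quoted verbatim from Derkach--Hassi--Malamud--de Snoo \cite[Lemma 2.9]{Derkach}, so there is no internal argument to compare yours against; what you have done is supply the proof the paper delegates to the literature. Your route is the natural one and is self-contained modulo one standard fact: the splitting $R = R_{\mathrm{op}} \ \hat{+} \ (\{0\}\times \mul R)$ with $R_{\mathrm{op}} := R \cap (\mc K \times (\mul R)^{\perp})$ single-valued, closed, and with $\dom R_{\mathrm{op}} = \dom R$, which is exactly the decomposability of closed relations that the paper itself records in Section \ref{two} (the decomposition \eqref{Tdecom}, citing \cite{Hassi2}); from there the closed graph theorem gives boundedness of $R_{\mathrm{op}}$ on the Hilbert space $\dom R$, and $w := R_{\mathrm{op}}^* u$ is a legitimate witness since your preliminary observation $\mul R \subseteq \mul (RT)$ yields $u \perp \mul R$, so that $\PI{b}{u} = \PI{R_{\mathrm{op}}a}{u} = \PI{a}{w}$ for every $(a,b)\in R$. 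The bookkeeping is also clean in that each hypothesis is used exactly once and where it must be: closedness of $R$ for the splitting, closedness of $\dom R$ for boundedness of the operator part, and $\ran T \subseteq \dom R$ only in the final verification that $(w,v)\in T^*$. One cosmetic remark: in the step showing $u \perp \mul R$ you do not actually need $0 \in \ran T \subseteq \dom R$; the pair $(0,g)$ with $g \in \mul R$ lies in $RT$ simply because $(0,0)\in T$ and $(0,g)\in R$, so that aside can be dropped.
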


\subsection{Matrix representations of linear relations}

Here we collect several results from \cite{HassiMR} regarding the operational calculus for block matrices with entries that are linear relations. 

Let $\St$ be a closed subspace of $\HH$ and let $a \in \lr(\St),$ $b\in \lr(\St^{\perp},\St),$ $c \in \lr(\St,\St^{\perp})$ and $d \in \lr(\St^{\perp})$ be linear relations. 

The \emph{column} of $a$ and $c$ is the linear relation in $\St \times \HH$  defined by
$$\begin{pmatrix}
	a \\ c
\end{pmatrix}_\St:=\left\{ \left(x, w_1+w_2 \right): (x,w_1) \in a, (x,w_2) \in c\right\}$$ and the \emph{row} of $a$ and $b$ is the linear relation in $\HH \times \St$ defined by
$$\begin{pmatrix}
	a & b
\end{pmatrix}_\St:=\left\{ \left(x_1+x_2, w+z \right): (x_1,w) \in a, (x_2,z) \in b\right\}.$$

The  linear relation in $\HH \times \HH$ \emph{generated} by the blocks $a,$  $b,$ $c$ and $d$ is defined as
$$\matriz{a}{b}{c}{d}_{\St}\!\!:=\left\{ \left(x_1+x_2, (w_1+z_1)+(w_2+z_2)\right): \! \!\!\!\vecdm{(x_1,w_1) \in a, (x_2,z_1)\in b}{(x_1,w_2) \in c, (x_2,z_2)\in d}\right\}.$$

If $T:=\matriz{a}{b}{c}{d}_{\St}$ then $\dom T=\dom a \cap \dom c \oplus \dom b \cap \dom d$ and $\mul T=(\mul a +\mul b) \oplus (\mul c + \mul d).$

When no confusion arises, we shall omit the sub-index $\St$.

\begin{obs} Different block matrices may generate the same linear relation. From now on, when we write that two block matrices are equal, we mean that they generate the same linear relation.
\end{obs}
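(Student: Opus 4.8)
The plan is to establish this existence claim by isolating the source of non-uniqueness and then exhibiting two distinct tuples of entries that generate the same relation.

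First I would record, for $T := \matriz{a}{b}{c}{d}_{\St}$, the membership criterion obtained by unwinding the definition together with the identity $Tx = y + \mul T$ for $(x,y) \in T$: a pair $(x_1 + x_2, y)$ with $x_1 \in \St$ and $x_2 \in \St^\perp$ belongs to $T$ precisely when $x_1 \in \dom a \cap \dom c$, $x_2 \in \dom b \cap \dom d$, $P_{\St}\, y \in a\, x_1 + b\, x_2$ and $P_{\St^\perp}\, y \in c\, x_1 + d\, x_2$. The key observation to extract is that the first row $(a,b)$ enters this description only through the coset sum $a\, x_1 + b\, x_2 \subseteq \St$; in particular its multivalued content is felt by $T$ only through $\mul a + \mul b$, in agreement with the formula $\mul T = (\mul a + \mul b) \oplus (\mul c + \mul d)$ recorded above. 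The analogous remark applies to the second row $(c,d)$.

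Second, I would convert this redundancy into an explicit witness under the assumption $\St \neq \{0\}$ and $\St^\perp \neq \{0\}$. I fix the second-row entries to be the zero operators $c = \St \times \{0\}$ and $d = \St^\perp \times \{0\}$, and compare two choices of first row that relocate a full copy of $\St$ between the two slots: take $(a,b)$ with $a = \{0\}\times\St$ (so $\mul a = \St$ and $\dom a = \{0\}$) and $b = \St^\perp \times \{0\}$, against $(a',b')$ with $a' = \{(0,0)\}$ and $b' = \St^\perp \times \St$ (so $\mul b' = \St$). Feeding either tuple into the criterion forces $x_1 = 0$ and leaves $x_2 \in \St^\perp$ free; for the first tuple $P_{\St}\, y$ ranges over $a\, 0 + b\, x_2 = \mul a + \{0\} = \St$, for the second over $a'\, 0 + b'\, x_2 = \{0\} + \St = \St$, while $P_{\St^\perp}\, y = 0$ throughout. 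Hence both tuples generate the single relation $T = \St^\perp \times \St$, even though $a \neq a'$ and $b \neq b'$.

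Finally, I would phrase the conclusion so as to justify the convention in the Remark: since the entries are pinned down by $T$ only up to this redistribution of multivalued parts — and, more generally, up to any modification leaving the two domains $\dom a \cap \dom c$, $\dom b \cap \dom d$ and the two coset families $a\, x_1 + b\, x_2$, $c\, x_1 + d\, x_2$ unchanged — equality of block matrices can only mean equality of the generated linear relations, a coincidence that is in turn testable through Lemma \ref{lemalr}. The one delicate point is the bookkeeping in the second step: one must verify that the prescribed changes to the entries alter none of those four data, which is exactly what the domain and multivalued-part formulas already available deliver.
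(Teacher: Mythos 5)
Your verification is correct, but note that the paper does not prove this Remark at all: it is a convention-setting observation, asserted without argument, so there is no in-paper proof to match yours against. What you supply is the missing witness, and it checks out. Your membership criterion — $(x_1+x_2,y)\in T$ iff $x_1\in\dom a\cap\dom c$, $x_2\in\dom b\cap\dom d$, $P_{\St}y\in ax_1+bx_2$ and $P_{\St^{\perp}}y\in cx_1+dx_2$ — is exactly what the definition of the generated relation gives once $y$ is split along $\HH=\St\oplus\St^{\perp}$, and both of your tuples (with $x_1$ forced to $0$ and $x_2$ free in $\St^{\perp}$) generate the single relation $\St^{\perp}\times\St$ while $a\neq a'$ and $b\neq b'$; the only (harmless) hypothesis is $\St\neq\{0\}$ and $\St^{\perp}\neq\{0\}$, which is all the Remark's ``may'' requires. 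The structural point you isolate — each row enters only through the coset sums, so multivalued mass can be redistributed among the entries of a row — is the real content, and it is tacitly confirmed later in the paper: in the proof of Corollary \ref{semiMR} the authors themselves equate the two distinct block matrices $\left(\begin{smallmatrix} P_{\M,\M\cap\N} & x \\ 0 & 0\end{smallmatrix}\right)$ and $\left(\begin{smallmatrix} I_\M & x \\ 0 & 0\end{smallmatrix}\right)$, the difference $\{0\}\times(\M\cap\N)$ being absorbed by $\mul x=\M\cap\N$, which is precisely your redistribution phenomenon in action. Your final paragraph is also the right reading of the convention: since Theorem \ref{theoMLR} singles out one canonical choice of blocks among possibly many, ``equality'' of block matrices can only sensibly mean equality of the generated relations, testable via Lemma \ref{lemalr}. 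In short: correct, complete for what the (unproved) statement claims, and more explicit than the paper itself.
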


On the other hand, given a linear relation $T \in \lr(\HH)$ and  a closed subspace $\St$ of $\HH,$ we say that $T$ \emph{admits a $2 \times 2$ block matrix representation with respect to $\St$} if there exist blocks  $a \in \lr(\St),$ $b\in \lr(\St^{\perp},\St),$ $c \in \lr(\St,\St^{\perp})$ and $d \in \lr(\St^{\perp})$ such that 
$$T=\matriz{a}{b}{c}{d}_{\St}.$$
It is easy to check that
$\dom a  \cap \dom c = \St \cap \dom T$ and $\dom b  \cap \dom d = \St^{\perp} \cap \dom T,$ and $\mul a  + \mul b = \St \cap \mul T$ and $\mul c   + \mul d  = \St^{\perp} \cap \mul T.$

\begin{thm}[\normalfont{cf. \cite[Theorem 5.1]{HassiMR}}] \label{theoMLR}  Let $T \in \lr(\HH)$ and let $\St$ be a closed subspace of $\HH.$  Then the following are equivalent:
	\begin{enumerate}
		\item $T$ admits a $2 \times 2$ block matrix representation with respect to $\St;$
		\item  $P_{\St} (\dom T) \subseteq \dom T$ and $P_{\St} (\mul T) \subseteq \mul T;$ 
		\item $T$ admits the representation 
		\begin{equation} \label{Acanon}
			T=\matriz{a}{b}{c}{d}_{\St},
		\end{equation} where $a:=P_{\St}T|_{\St},$  $b:=P_{\St}T|_{\St^{\perp}},$ $ c:=P_{\St^{\perp}}T|_{\St}$ and  $d:=P_{\St^{\perp}}T|_{\St^{\perp}}.$
	\end{enumerate}
\end{thm}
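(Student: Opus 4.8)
The plan is to establish the cycle of implications $(iii)\Rightarrow(i)\Rightarrow(ii)\Rightarrow(iii)$, the first two being routine and the last one carrying the real content. The implication $(iii)\Rightarrow(i)$ is immediate, since $(iii)$ exhibits an explicit $2\times 2$ block matrix representation of $T$ with respect to $\St$, which is precisely what $(i)$ asserts.

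For $(i)\Rightarrow(ii)$ I would invoke the formulas for the domain and multivalued part of a relation generated by a block matrix, namely $\dom T=(\dom a\cap\dom c)\oplus(\dom b\cap\dom d)$ and $\mul T=(\mul a+\mul b)\oplus(\mul c+\mul d)$. In the first identity the left summand lies in $\St$ and the right one in $\St^{\perp}$, so for $x\in\dom T$ the vector $P_{\St}x$ equals the $\St$-component, which already belongs to $\dom a\cap\dom c\subseteq\dom T$; hence $P_{\St}(\dom T)\subseteq\dom T$. The same orthogonal-direct-sum argument applied to the second identity gives $P_{\St}(\mul T)\subseteq\mul T$.

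The substantive step is $(ii)\Rightarrow(iii)$. Setting $a:=P_{\St}T|_{\St}$, $b:=P_{\St}T|_{\St^{\perp}}$, $c:=P_{\St^{\perp}}T|_{\St}$ and $d:=P_{\St^{\perp}}T|_{\St^{\perp}}$, one checks these are relations of the required types, and a direct computation yields $\dom a=\dom c=\St\cap\dom T$, $\dom b=\dom d=\St^{\perp}\cap\dom T$, $\mul a=\mul b=P_{\St}(\mul T)$ and $\mul c=\mul d=P_{\St^{\perp}}(\mul T)$. Writing $R$ for the relation generated by these blocks, I would prove $T=R$ by verifying the three hypotheses of Lemma \ref{lemalr}: $R\subseteq T$, $\dom T\subseteq\dom R$ and $\mul T\subseteq\mul R$. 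The inclusion $\dom T\subseteq\dom R$ follows from $P_{\St}(\dom T)\subseteq\dom T$, since it forces both $P_{\St}x$ and $P_{\St^{\perp}}x$ into $\dom T$ for every $x\in\dom T$, placing them in $\dom a\cap\dom c$ and $\dom b\cap\dom d$ respectively; and $\mul T\subseteq\mul R$ is immediate from the computation $\mul R=P_{\St}(\mul T)\oplus P_{\St^{\perp}}(\mul T)$, as any $m\in\mul T$ decomposes as $P_{\St}m+P_{\St^{\perp}}m$ with the two pieces in the respective summands.

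I expect the inclusion $R\subseteq T$ to be the main obstacle. The difficulty is that a generic element of $R$ is assembled from the four blocks separately, so the $\St$- and $\St^{\perp}$-parts contributed by $a$ and $c$ (respectively $b$ and $d$) may a priori come from \emph{different} elements of $T(x_1)$ (respectively $T(x_2)$), which differ only by a vector of $\mul T$. To reassemble these pieces into a genuine element of $T$, I would use that the hypothesis $P_{\St}(\mul T)\subseteq\mul T$ is equivalent to $P_{\St^{\perp}}(\mul T)\subseteq\mul T$ (since $\mul T$ is a subspace containing both $m$ and $P_{\St}m$, it contains $m-P_{\St}m=P_{\St^{\perp}}m$): this is exactly what guarantees that the resulting mismatch lies in $\mul T$ and can be absorbed, so that $(x_1,w_1+w_2)\in T$ and $(x_2,z_1+z_2)\in T$, whence their sum, the generic element of $R$, lies in $T$. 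With $R\subseteq T$ established, Lemma \ref{lemalr} gives $T=R$, which is the representation claimed in $(iii)$.
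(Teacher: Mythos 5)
Your proof is correct. Note that the paper itself does not prove Theorem \ref{theoMLR}; it quotes the result from \cite[Theorem 5.1]{HassiMR}, so there is no internal argument to compare against, and your write-up supplies a valid self-contained proof from the paper's stated ingredients. Your cycle $(iii)\Rightarrow(i)\Rightarrow(ii)\Rightarrow(iii)$, with Lemma \ref{lemalr} (Arens' criterion) reducing $T=R$ to the three inclusions $R\subseteq T$, $\dom T\subseteq\dom R$, $\mul T\subseteq\mul R$, is the natural route, and you correctly isolated and resolved the one genuine difficulty: in checking $R\subseteq T$, the $\St$- and $\St^{\perp}$-components produced by the blocks $a,c$ (resp.\ $b,d$) come from possibly different elements $y_1,y_2\in Tx_1$ (resp.\ $u_1,u_2\in Tx_2$), and the identity $P_{\St}y_1+P_{\St^{\perp}}y_2=y_1+P_{\St^{\perp}}(y_2-y_1)$ together with $P_{\St^{\perp}}(\mul T)\subseteq\mul T$ (which, as you observe, is equivalent to the hypothesis $P_{\St}(\mul T)\subseteq\mul T$) absorbs the mismatch into $\mul T$, giving $(x_1,w_1+w_2)\in T$ and $(x_2,z_1+z_2)\in T$.
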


\begin{lema}[\normalfont{\cite[Lemma 5.5]{HassiMR}}] \label{suma} Let $\St$ be a closed subspace of $\HH$ $a, a' \in \lr(\St),$ $b, b'\in \lr(\St^{\perp},\St),$ $c, c' \in \lr(\St,\St^{\perp}),$ $d, d' \in \lr(\St^{\perp}),$  $T=\matriz{a}{b}{c}{d}_{\St},$ $S=\matriz{a'}{b'}{c'}{d'}_{\St}$ and $\lambda \in \mathbb{C}.$ Then
	$$T+\lambda S=\matriz{a+\lambda a'}{b+\lambda b'}{c+\lambda c'}{d+\lambda d'}_{\St}.$$
\end{lema}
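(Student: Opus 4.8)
The plan is to establish the asserted identity by a direct double inclusion, unwinding the definition of the relation generated by a block matrix together with the definitions of the sum and the scalar multiple of linear relations. Throughout I write each input vector according to the orthogonal decomposition $\HH=\St\oplus\St^{\perp}$ and exploit that this decomposition is unique; this uniqueness is what lets me match up data coming from $T$ and from $\lambda S$.

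First I would take $(x,\eta)\in T+\lambda S$. By the definition of the sum there are $(x,y)\in T$ and $(x,y')\in S$ with $\eta=y+\lambda y'$, where $\lambda S=\{(u,\lambda v):(u,v)\in S\}$. Expanding $(x,y)\in T=\matriz{a}{b}{c}{d}_{\St}$ gives $x=x_1+x_2$ with $x_1\in\St$, $x_2\in\St^{\perp}$ and $y=(w_1+z_1)+(w_2+z_2)$, where $(x_1,w_1)\in a$, $(x_2,z_1)\in b$, $(x_1,w_2)\in c$, $(x_2,z_2)\in d$; likewise $(x,y')\in S$ yields $x=x_1'+x_2'$ and $y'=(w_1'+z_1')+(w_2'+z_2')$ with the analogous memberships in $a',b',c',d'$. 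Since $x_1,x_1'\in\St$ and $x_2,x_2'\in\St^{\perp}$, uniqueness of the decomposition forces $x_1=x_1'$ and $x_2=x_2'$. Grouping $\eta=y+\lambda y'$ into its $\St$- and $\St^{\perp}$-components and applying the definition of the sum of relations to each block, one reads off $(x_1,w_1+\lambda w_1')\in a+\lambda a'$, $(x_2,z_1+\lambda z_1')\in b+\lambda b'$, $(x_1,w_2+\lambda w_2')\in c+\lambda c'$ and $(x_2,z_2+\lambda z_2')\in d+\lambda d'$, which says precisely that $(x,\eta)$ lies in the right-hand side relation $\matriz{a+\lambda a'}{b+\lambda b'}{c+\lambda c'}{d+\lambda d'}_{\St}$. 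This establishes one inclusion.

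The reverse inclusion is obtained by running the same computation backwards: starting from an element of $\matriz{a+\lambda a'}{b+\lambda b'}{c+\lambda c'}{d+\lambda d'}_{\St}$, each membership such as $(x_1,W_1)\in a+\lambda a'$ splits as $W_1=w_1+\lambda w_1'$ with $(x_1,w_1)\in a$ and $(x_1,w_1')\in a'$, and reassembling the unprimed and primed pieces separately exhibits the vector as $y+\lambda y'$ with $(x,y)\in T$ and $(x,y')\in S$. Hence the element lies in $T+\lambda S$. Alternatively, once one inclusion is in hand, I could invoke Lemma \ref{lemalr} and merely compare domains and multivalued parts, but the symmetric direct argument is just as short and avoids separately computing $\dom$ and $\mul$.

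The argument is essentially bookkeeping, so the only genuinely delicate point is the scalar multiple. For $\lambda\neq 0$ the identity $\lambda S=\{(u,\lambda v):(u,v)\in S\}$ makes the blockwise grouping transparent. The degenerate case $\lambda=0$, in which $0\cdot S=\dom S\times\{0\}$ \emph{restricts} the domain, has to be watched: there the surviving constraints $x_1\in\dom a'\cap\dom c'$ and $x_2\in\dom b'\cap\dom d'$ appear, but since they arise identically on both sides the same double-inclusion absorbs this case. Thus the main obstacle is purely organizational, namely keeping the eight component vectors and their ambient subspaces straight; no real difficulty is involved.
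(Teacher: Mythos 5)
Your argument is correct. Note, however, that the paper itself gives no proof of this lemma: it is imported verbatim from the cited source (\cite[Lemma 5.5]{HassiMR}), so the only comparison to make is against what such a proof must contain, and yours contains it. The two genuinely load-bearing points are both handled properly: (1) the identification $x_1=x_1'$, $x_2=x_2'$ via uniqueness of the decomposition $\HH=\St\oplus\St^{\perp}$ (this is exactly where closedness of $\St$ enters, and it is what allows the four blockwise sums $a+\lambda a'$, etc., to be formed with a \emph{common} first component, as the definition of the sum of relations requires); and (2) the degenerate case $\lambda=0$, where $0\cdot S=\dom S\times\{0\}$ restricts domains rather than acting trivially --- your observation that the restriction $x_1\in\dom a'\cap\dom c'$, $x_2\in\dom b'\cap\dom d'$ appears identically on both sides (using $\dom S=(\dom a'\cap\dom c')\oplus(\dom b'\cap\dom d')$ and, again, uniqueness of the orthogonal splitting) closes that case. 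One small stylistic remark: in the forward direction with $\lambda=0$ you implicitly choose \emph{some} $y'$ with $(x,y')\in S$ in order to run the blockwise bookkeeping; it would be worth one sentence making that choice explicit, since the element of $0\cdot S$ itself only records $x\in\dom S$ and not a witness $y'$. Your alternative suggestion of proving one inclusion and then invoking Lemma \ref{lemalr} by comparing $\dom$ and $\mul$ would also work (both sides have domain $(\dom a\cap\dom a'\cap\dom c\cap\dom c')\oplus(\dom b\cap\dom b'\cap\dom d\cap\dom d')$ for $\lambda\neq0$, and multivalued parts match by the formula $\mul T=(\mul a+\mul b)\oplus(\mul c+\mul d)$), but as you say the symmetric computation is no longer.
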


\begin{lema}[\normalfont{\cite[Lemma 7.2]{HassiMR}}] \label{filacolumna} Let $\St$ be a closed subspace of $\HH,$ $a \in \lr(\St),$ $c \in \lr(\St,\St^{\perp}),$ $T \in \lr(\HH,\St)$ and $C:=\begin{pmatrix}
		a \\ c
	\end{pmatrix} \in \lr(\St, \HH).$
	Then
	\begin{equation} \label{ig}
		C T \subseteq \begin{pmatrix}
			a T \\ c T
		\end{pmatrix}.
	\end{equation}
	Equality in \eqref{ig} holds if and only if
	$$\mul T \cap \dom C \subseteq  \mul T \cap \ker a + \mul T \cap \ker c.$$
\end{lema}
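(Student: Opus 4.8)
The plan is to establish the inclusion \eqref{ig} by a direct diagram chase, valid with no hypotheses, and then to analyze equality through Lemma \ref{lemalr}: since $CT \subseteq \begin{pmatrix} aT \\ cT \end{pmatrix}$, equality will hold exactly when the two relations share the same domain and the same multivalued part. The structural feature driving everything is that $a$ and $c$ have orthogonal ranges, $\ran a \subseteq \St$ and $\ran c \subseteq \St^{\perp}$, so that every vector in the ranges of $C$ and of $\begin{pmatrix} aT \\ cT \end{pmatrix}$ splits canonically along $\HH = \St \oplus \St^{\perp}$. Throughout I write $(\star)$ for the displayed condition $\mul T \cap \dom C \subseteq \mul T \cap \ker a + \mul T \cap \ker c$.

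For the inclusion I would take $(u,y) \in CT$ and choose $z \in \St$ with $(u,z) \in T$ and $(z,y) \in C$. Unfolding the definition of the column $C = \begin{pmatrix} a \\ c \end{pmatrix}$ produces $w_1,w_2$ with $(z,w_1)\in a$, $(z,w_2)\in c$ and $y = w_1 + w_2$. Composing each with $(u,z) \in T$ gives $(u,w_1) \in aT$ and $(u,w_2) \in cT$, hence $(u,w_1+w_2) = (u,y) \in \begin{pmatrix} aT \\ cT \end{pmatrix}$. This proves \eqref{ig} for arbitrary $T$, $a$, $c$.

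For equality I would prove the reverse inclusion under $(\star)$. Take $(u,y) \in \begin{pmatrix} aT \\ cT \end{pmatrix}$ and use the orthogonal splitting to write $y = v + w$ with $v = P_{\St}y \in \St$ and $w = P_{\St^{\perp}}y \in \St^{\perp}$, so that $(u,v) \in aT$ and $(u,w) \in cT$. Selecting witnesses yields $z_1, z_2 \in \St$ with $(u,z_1),(u,z_2) \in T$, $(z_1,v)\in a$ and $(z_2,w)\in c$; then $m := z_1 - z_2$ satisfies $(0,m) \in T$, i.e.\ $m \in \mul T$. The crux is to manufacture a single intermediate vector: invoking $(\star)$ to split $m = m_a + m_c$ with $m_a \in \mul T \cap \ker a$ and $m_c \in \mul T \cap \ker c$, I would set $z := z_1 - m_a = z_2 + m_c$ and verify that $(u,z) \in T$ (since $m_a \in \mul T$), that $(z,v) \in a$ (since $(z_1,v)\in a$ and $m_a \in \ker a$), and that $(z,w) \in c$ (since $(z_2,w)\in c$ and $m_c \in \ker c$). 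Hence $(z,v+w)=(z,y) \in C$ and $(u,y) \in CT$. For the necessity of $(\star)$ I would run the same construction with $u=0$: given $n \in \mul T \cap \dom C$ with $(n,v)\in a$ and $(n,w)\in c$, the pair $(0,v+w)$ lies in $\begin{pmatrix} aT \\ cT \end{pmatrix} = CT$, and comparing the resulting witness with $n$ reconstitutes the required decomposition.

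The main obstacle is precisely the production of the common intermediate vector $z$ in the reverse inclusion. The two witnesses $z_1$ and $z_2$ are a priori linked only through $z_1 - z_2 \in \mul T$, and $(\star)$ is the exact algebraic input that repairs this mismatch by distributing the defect $m$ into a $\ker a$-part and a $\ker c$-part inside $\mul T$. The delicate point to check is the applicability of $(\star)$, namely that the defect one must split indeed falls under the hypothesis $\mul T \cap \dom C$; this is where one leans on $m \in \mul T$ together with $z_1 \in \dom a$, $z_2 \in \dom c$ and the orthogonality of the ranges. Everything else — the composition identities $\ran(RT)=R(\ran T)$ and $\mul(RT)=R(\mul T)$ of Lemma \ref{propbasicas}, and the equality criterion of Lemma \ref{lemalr} — is routine bookkeeping.
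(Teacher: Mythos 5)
There is no proof in the paper to compare against: Lemma \ref{filacolumna} is imported from \cite[Lemma 7.2]{HassiMR} without argument, so your proposal stands on its own. Your proof of the inclusion \eqref{ig} is correct, and so is your reduction of equality to producing a common intermediate vector; but the sufficiency half has a genuine gap, located exactly at the point you call delicate. To apply $(\star)$ to the defect $m=z_1-z_2$ you need $m\in\mul T\cap\dom C=\mul T\cap\dom a\cap\dom c$, and the facts you lean on ($m\in\mul T$, $z_1\in\dom a$, $z_2\in\dom c$, orthogonality of the ranges) do not give this: the difference of an element of $\dom a$ and an element of $\dom c$ need not lie in either domain. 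Moreover the step cannot be repaired, because the implication ``$(\star)\Rightarrow$ equality'' is false as stated. In $\HH=\mathbb{C}^4$ with orthonormal basis $\{e_1,e_2,f_1,f_2\}$ and $\St=\Span\{e_1,e_2\}$, let $a=\Span\{(e_2,e_1)\}$, $c=\Span\{(e_1+e_2,f_1)\}$ and $T=\Span\{(f_2,e_2),(0,e_1)\}$. Then $\dom C=\dom a\cap\dom c=\{0\}$, hence $C=\{(0,0)\}$ and $CT=\{(0,0)\}$, and $(\star)$ holds vacuously; yet $aT=\Span\{(f_2,e_1)\}$ and $cT=\Span\{(f_2,f_1)\}$, so $\left(\begin{smallmatrix} aT\\ cT\end{smallmatrix}\right)=\Span\{(f_2,e_1+f_1)\}\neq CT$: the only witnesses are $z_1=e_2$ and $z_2=e_1+e_2$, their defect $-e_1\in\mul T$ lies in neither domain, and no common witness exists. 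Your argument does become a proof under an extra hypothesis forcing $m\in\dom C$, e.g.\ $\mul T\subseteq\dom C$ or $\ran T\subseteq\dom C$; such a condition holds in both places where the paper applies the lemma (in Theorem \ref{repSp} one has $\mul x_0=\ol{\M}\cap\N\subseteq\dom C$, and in Proposition \ref{propScomp2} the column is everywhere defined on $\St$), so those applications are unaffected, but the equivalence as quoted needs a side condition of this kind.

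The necessity half as you sketch it does not close either. Comparing the $CT$-witness $z$ of $(0,v+w)$ with $n$ yields $(n-z,0)\in a$ and $(n-z,0)\in c$, i.e.\ $n-z\in\mul T\cap\ker a\cap\ker c$, while about $z$ you only know $z\in\mul T\cap\dom C$; so $n=(n-z)+z$ is not the decomposition $(\star)$ requires. The fix is to test with an element of the big column whose $cT$-component is zero: from $(0,v)\in aT$ and $(0,0)\in cT$ you get $(0,v)\in\left(\begin{smallmatrix} aT\\ cT\end{smallmatrix}\right)=CT$, and the resulting witness $z\in\mul T$ satisfies $(z,v)\in a$ and $(z,0)\in c$, because the orthogonal splitting of $v\in\St$ forces the $c$-coordinate to vanish. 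Then $n-z\in\mul T\cap\ker a$ and $z\in\mul T\cap\ker c$, and $n=(n-z)+z$ is exactly the required decomposition. In summary: your inclusion argument is correct; necessity is true but needs this modified choice of test element; sufficiency --- and with it the ``if'' direction of the statement as given --- fails without an additional domain hypothesis.
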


\begin{lema}[\normalfont{\cite[Lemma 7.1 and Corollary 7.1]{HassiMR}}] \label{filacolumna2} Let $\St$ be a closed subspace of $\HH,$ $a \in \lr(\St),$ $b \in \lr(\St^{\perp},\St),$ $T \in \lr(\St,\HH)$ and $R:=\begin{pmatrix}
		a & b
	\end{pmatrix} \in \lr(\HH, \St).$
	Then
	\begin{equation} \label{ig2}
		\begin{pmatrix}
			Ta & Tb
		\end{pmatrix} \subseteq TR 
	\end{equation}
	If $T \in L(\St,\HH)$ equality in \eqref{ig2} holds.
\end{lema}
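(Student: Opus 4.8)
The plan is to verify both sides of \eqref{ig2} at the level of individual pairs, since the row and the product are defined set-theoretically. First I would unwind the two relations, noting that here the row $\begin{pmatrix} Ta & Tb \end{pmatrix}$ has its first block $Ta\in\lr(\St,\HH)$ acting on $\St$ and its second block $Tb\in\lr(\St^\perp,\HH)$ acting on $\St^\perp$, both mapping into $\HH$. By definition of the product, $(x_1,u)\in Ta$ means there is $w\in\St$ with $(x_1,w)\in a$ and $(w,u)\in T$, and likewise $(x_2,v)\in Tb$ means there is $z\in\St$ with $(x_2,z)\in b$ and $(z,v)\in T$. Hence a generic element of the row has the form $(x_1+x_2,\,u+v)$ with $x_1\in\St$, $x_2\in\St^\perp$ and witnesses $w,z\in\St$ as above. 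On the other side, $(x_1+x_2,y)\in TR$ means there is $s\in\St$ with $(x_1+x_2,s)\in R$ and $(s,y)\in T$; and $(x_1+x_2,s)\in R=\begin{pmatrix} a & b \end{pmatrix}$ forces $s=w+z$ with $(x_1,w)\in a$, $(x_2,z)\in b$. So membership in $TR$ amounts to $(x_1,w)\in a$, $(x_2,z)\in b$ and $(w+z,y)\in T$.

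For the inclusion \eqref{ig2}, I would start from a pair $(x_1+x_2,u+v)$ in the row with witnesses as above, so that $(w,u)\in T$ and $(z,v)\in T$. Since $T$ is a linear subspace of $\St\times\HH$, adding these two pairs gives $(w+z,u+v)\in T$; together with $(x_1,w)\in a$ and $(x_2,z)\in b$ this is precisely the description of a member of $TR$ with $y:=u+v$. Thus the row is contained in $TR$, which is \eqref{ig2}. The only property of $T$ used here is its closure under addition, so the inclusion holds for arbitrary $T\in\lr(\St,\HH)$.

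For the reverse inclusion under the hypothesis $T\in L(\St,\HH)$, I would take $(x_1+x_2,y)\in TR$, so that $(x_1,w)\in a$, $(x_2,z)\in b$ and $(w+z,y)\in T$; the point is to split $y$. Because $a\in\lr(\St)$ and $b\in\lr(\St^\perp,\St)$, both witnesses satisfy $w,z\in\St$, and since $T$ is everywhere defined on $\St$ the vectors $Tw$ and $Tz$ make sense; set $u:=Tw$ and $v:=Tz$. As $T$ is a single-valued linear operator, $y=T(w+z)=Tw+Tz=u+v$, while $(w,u)\in T$ and $(z,v)\in T$ by construction. Then $(x_1,u)\in Ta$ and $(x_2,v)\in Tb$, so $(x_1+x_2,y)=(x_1+x_2,u+v)$ lies in the row, giving equality.

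The main obstacle is exactly this splitting step. For a general relation one only knows $w+z\in\dom T$, and this does not force $w$ and $z$ to lie in $\dom T$ individually, so the vectors $Tw,Tz$ needed to reconstruct the row element may simply not exist. Once $T$ is everywhere defined on $\St$ this difficulty vanishes automatically, and single-valuedness then makes the decomposition $y=Tw+Tz$ explicit; both features are built into the hypothesis $T\in L(\St,\HH)$, so boundedness enters only through $\dom T=\St$ and $\mul T=\{0\}$ rather than through any continuity estimate.
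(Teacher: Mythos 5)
Your proof is correct. Note, however, that the paper never proves this lemma: it is imported verbatim from \cite[Lemma 7.1 and Corollary 7.1]{HassiMR}, so your element-wise argument is a self-contained substitute rather than a parallel to anything in the text. Both halves check out. The forward inclusion uses only that $T$ is a linear subspace of $\St\times\HH$ (closed under addition of pairs), so it holds for arbitrary relations, as you say. The reverse inclusion uses exactly the two set-theoretic consequences of $T\in L(\St,\HH)$ that you isolate: $\dom T=\St$, so the witnesses $w,z\in\St$ produced by the row $R$ lie individually in the domain, and $\mul T=\{0\}$, so $y=T(w+z)=Tw+Tz$ splits into the two summands needed to land in $\begin{pmatrix} Ta & Tb \end{pmatrix}$. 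Your closing observation is also worth keeping: continuity plays no role, so equality in \eqref{ig2} actually holds for any everywhere-defined single-valued $T\in\lr(\St,\HH)$, a slightly stronger statement than the one quoted. This is consistent with the companion column result, Lemma \ref{filacolumna}, where the sharp criterion for equality is likewise purely algebraic (a condition involving $\mul T$ and kernels) rather than topological.
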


\begin{lema}\label{producto} Let $\St$ be a closed subspace of $\HH,$ $a, a' \in \lr(\St),$ $b, b'\in \lr(\St^{\perp},\St),$ $c, c' \in \lr(\St,\St^{\perp}),$ $d, d' \in \lr(\St^{\perp})$  $T=\matriz{a}{b}{c}{d}_{\St}$ and $S=\matriz{a'}{b'}{c'}{d'}_{\St}.$ Then
	\begin{align*}
		TS=\begin{pmatrix}
			a\\c
		\end{pmatrix} \begin{pmatrix}
			a'&b'
		\end{pmatrix}+\begin{pmatrix}
			b\\d
		\end{pmatrix} \begin{pmatrix}
			c'&d'
		\end{pmatrix} \supseteq 
		\begin{pmatrix}
			\begin{pmatrix}
				a \\ c\end{pmatrix} a' 
			& \begin{pmatrix}
				a \\ c
			\end{pmatrix} b' 
		\end{pmatrix}+\begin{pmatrix}
			\begin{pmatrix}
				b \\ d\end{pmatrix} c' 
			& \begin{pmatrix}
				b \\ d
			\end{pmatrix} d' 
		\end{pmatrix}.
	\end{align*}
\end{lema}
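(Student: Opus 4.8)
The statement has two parts: the equality of $TS$ with the middle relation, and the inclusion $\supseteq$ of the middle relation in the rightmost one. Set $C_1:=\begin{pmatrix} a\\ c\end{pmatrix}\in\lr(\St,\HH)$ and $C_2:=\begin{pmatrix} b\\ d\end{pmatrix}\in\lr(\St^\perp,\HH)$, so that the middle relation is $M:=C_1\begin{pmatrix} a' & b'\end{pmatrix}+C_2\begin{pmatrix} c' & d'\end{pmatrix}$. The inclusion $\supseteq$ is the routine part: Lemma~\ref{filacolumna2} applied to $C_1$ and the row $\begin{pmatrix} a' & b'\end{pmatrix}$ gives $\begin{pmatrix} C_1a' & C_1b'\end{pmatrix}\subseteq C_1\begin{pmatrix} a' & b'\end{pmatrix}$, while its $\St^\perp$-analogue applied to $C_2$ gives $\begin{pmatrix} C_2c' & C_2d'\end{pmatrix}\subseteq C_2\begin{pmatrix} c' & d'\end{pmatrix}$; since the relation sum is monotone under inclusion, adding these two yields the asserted $\supseteq$.

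For the equality $TS=M$ I would work directly from the definitions of the relation product and of the block matrices. Fix $u=x_1+x_2$ with $x_1\in\St$ and $x_2\in\St^\perp$. From the formulas for $\dom S$ and $\mul S$, $Su\neq\emptyset$ exactly when $x_1\in\dom a'\cap\dom c'$ and $x_2\in\dom b'\cap\dom d'$, and then each $t\in Su$ satisfies $P_\St t\in a'x_1+b'x_2$ and $P_{\St^\perp}t\in c'x_1+d'x_2$; writing $t=t_1+t_2$ accordingly, any $v\in Tt$ has the form $v=(at_1+bt_2)+(ct_1+dt_2)$. The inclusion $TS\subseteq M$ then drops out immediately, since a single $t\in Su$ supplies the witness $t_1$ for the first column--row product and $t_2$ for the second, placing $(u,v)$ in $M$.

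The reverse inclusion $M\subseteq TS$ is where the argument really happens, and is the step I expect to be the main obstacle. A pair $(u,v)\in M$ arrives with two \emph{independently} chosen witnesses, $s_1\in a'x_1+b'x_2\subseteq\St$ feeding $C_1$ and $s_2\in c'x_1+d'x_2\subseteq\St^\perp$ feeding $C_2$. To reproduce $(u,v)$ in $TS$ I must manufacture a single $t\in Su$ with $P_\St t=s_1$ and $P_{\St^\perp}t=s_2$, i.e.\ I must verify that $t:=s_1+s_2$ genuinely lies in $Su$. This is exactly where the block structure enters: because $\mul S=(\mul a'+\mul b')\oplus(\mul c'+\mul d')$ splits along $\St\oplus\St^\perp$, representatives chosen independently from $a'x_1+b'x_2$ and from $c'x_1+d'x_2$ recombine into an element of $Su=v_0+\mul S$. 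With such a $t$ at hand, the elements assembled for $v$ lie in $Tt$ by the definition of the block matrix $T$, so $(u,v)\in TS$. Alternatively one can package the equality through Lemma~\ref{lemalr}, proving $TS\subseteq M$ as above and then checking $\dom M\subseteq\dom TS$ and $\mul M\subseteq\mul TS$; via Lemma~\ref{propbasicas} and the domain and multivalued formulas for block matrices, both reduce once more to the same splitting of $\mul S$.
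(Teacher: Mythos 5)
Your proof is correct, but it is genuinely different in character from the paper's: the paper's entire proof is a citation of \cite[Lemma 8.2 and Equation (8.5)]{HassiMR}, where the identity $TS=\begin{pmatrix} a\\c \end{pmatrix}\begin{pmatrix} a'&b' \end{pmatrix}+\begin{pmatrix} b\\d \end{pmatrix}\begin{pmatrix} c'&d' \end{pmatrix}$ and the inclusion are established, whereas you reprove the equality from first principles. Your argument is sound at every step: the crux, as you correctly identify, is that for a block matrix $S$ the set $Su$ is the Minkowski sum of $a'x_1+b'x_2\subseteq\St$ and $c'x_1+d'x_2\subseteq\St^{\perp}$, so the two \emph{independently} chosen witnesses $s_1,s_2$ arising from the relation sum $C_1\begin{pmatrix} a'&b' \end{pmatrix}+C_2\begin{pmatrix} c'&d' \end{pmatrix}$ recombine into a single intermediate element $t=s_1+s_2\in Su$; in fact this follows even more directly from the definition of the block matrix (writing $s_1=w_1+z_1$, $s_2=w_2+z_2$ with $(x_1,w_1)\in a'$, $(x_2,z_1)\in b'$, $(x_1,w_2)\in c'$, $(x_2,z_2)\in d'$ exhibits $(u,s_1+s_2)\in S$ outright), so your detour through $\mul S=(\mul a'+\mul b')\oplus(\mul c'+\mul d')$, while valid, is not even needed. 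Your handling of the final inclusion via Lemma \ref{filacolumna2} and its $\St^{\perp}$-analogue, plus monotonicity of the relation sum, is exactly the right use of the paper's toolkit. What your approach buys is a self-contained verification inside the paper's own framework, making the lemma independent of the external reference; what the paper's citation buys is brevity and consistency with the operational calculus of \cite{HassiMR}, of which this lemma is a particular instance.
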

\begin{proof} It follows by \cite[Lemma 8.2 and Equation (8.5)]{HassiMR}. 
\end{proof}

\begin{lema}[\normalfont{\cite[Lemma 4.1]{HassiMR}}] \label{rcclosed} Let $\St$ be a closed subspace of $\HH,$ $a \in \lr(\St),$ $b\in \lr(\St^{\perp},\St)$ and $c \in \lr(\St,\St^{\perp}).$ Then
	\begin{enumerate}
		\item [1.] If $a, c$ are closed then the column $\begin{pmatrix}
			a\\c
		\end{pmatrix}$ is closed.
		\item [2.]  If $a, b$ are closed, $\dom b^* \subseteq \dom a^*$ and $\dom a^*$ is closed, then the row $\begin{pmatrix}
			a&b
		\end{pmatrix}$ is closed.
	\end{enumerate}
\end{lema}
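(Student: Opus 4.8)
The plan is to establish $i)$ directly and then deduce $ii)$ from $i)$ by passing to adjoints, so I would prove the column statement first.

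For $i)$, the key point is that the two target components of the column live in the \emph{orthogonal} subspaces $\St$ and $\St^{\perp}$. I would argue sequentially: let $(x_n,y_n)\in \begin{pmatrix} a\\c\end{pmatrix}$ with $(x_n,y_n)\to(x,y)$ in $\St\times\HH$, and write $y_n=w_1^{(n)}+w_2^{(n)}$ with $(x_n,w_1^{(n)})\in a$, $w_1^{(n)}\in\St$, and $(x_n,w_2^{(n)})\in c$, $w_2^{(n)}\in\St^{\perp}$. Since the orthogonal projections are continuous, $w_1^{(n)}=P_{\St}y_n\to P_{\St}y$ and $w_2^{(n)}=P_{\St^{\perp}}y_n\to P_{\St^{\perp}}y$. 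As $a$ and $c$ are closed, $(x,P_{\St}y)\in a$ and $(x,P_{\St^{\perp}}y)\in c$, whence $(x,y)=(x,P_{\St}y+P_{\St^{\perp}}y)\in\begin{pmatrix} a\\c\end{pmatrix}$. Equivalently, under the identification $\St\times\HH\cong\St\times\St\times\St^{\perp}$ the column is the intersection of the two closed subspaces $\{(x,w_1,w_2):(x,w_1)\in a\}$ and $\{(x,w_1,w_2):(x,w_2)\in c\}$.

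For $ii)$, write $R:=\begin{pmatrix} a & b\end{pmatrix}$. I would first record the identity $R^{*}=\begin{pmatrix} a^{*}\\ b^{*}\end{pmatrix}$, obtained by splitting the defining condition of the adjoint according to $\HH=\St\oplus\St^{\perp}$ in the first variable. Since $a^{*}$ and $b^{*}$ are closed (adjoints always are), part $i)$ shows the column $\begin{pmatrix} a^{*}\\ b^{*}\end{pmatrix}$ is closed, and therefore $\ol R=R^{**}=\begin{pmatrix} a^{*}\\ b^{*}\end{pmatrix}^{*}$. It then suffices to prove $R=\ol R$, for which I would invoke Arens' criterion (Lemma~\ref{lemalr}): the inclusion $R\subseteq\ol R$ is automatic, so I must only check that the multivalued parts and the domains agree.

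The multivalued parts cause no trouble: using $\mul T^{*}=(\dom T)^{\perp}$ one gets $\mul\ol R=(\dom R^{*})^{\perp}$ (complement taken in $\St$), and since $\dom R^{*}=\dom a^{*}\cap\dom b^{*}=\dom b^{*}$ by the hypothesis $\dom b^{*}\subseteq\dom a^{*}$, closedness of $b$ gives $(\dom b^{*})^{\perp}=\mul b$ in $\St$; the same hypothesis together with closedness of $a$ forces $\mul a=(\dom a^{*})^{\perp}\subseteq(\dom b^{*})^{\perp}=\mul b$ in $\St$, so $\mul R=\mul a+\mul b=\mul b=\mul\ol R$. The remaining inclusion $\dom\ol R\subseteq\dom R=\dom a\oplus\dom b$ is the main obstacle, and it is exactly here that the hypothesis ``$\dom a^{*}$ closed'' must be used: it guarantees that the ranges of the operator parts of both $a$ and $b$ lie in the single \emph{closed} subspace $\D:=\dom a^{*}$ of $\St$ (for $b$ one also uses $\dom b^{*}\subseteq\dom a^{*}$), which is what lets one keep the limit of a convergent sequence from $R$ inside $\dom a\oplus\dom b$. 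Concretely I would reduce to the operator parts by factoring out the closed multivalued part $\mul b$ and then close the domain by a closed-graph argument; alternatively, one can write $R$ as a subspace sum of two closed relations and apply Deutsch's theorem (Proposition~\ref{Deutch}) to reduce its closedness to that of the corresponding sum of orthogonal complements, where the conditions on $\dom a^{*}$ and $\dom b^{*}$ supply the needed closed-sum hypothesis.
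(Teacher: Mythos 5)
Your part i) is correct and complete: since the two target components of the column lie in the mutually orthogonal subspaces $\St$ and $\St^{\perp}$, they are recovered continuously from their sum by $P_{\St}$ and $P_{\St^{\perp}}$, and closedness of $a$ and $c$ then passes to the column. (Note the paper gives no proof of this lemma; it is imported from \cite[Lemma 4.1]{HassiMR}, so your argument must stand on its own.) In part ii) the skeleton is also sound: the identity $R^*=\begin{pmatrix} a^*\\ b^*\end{pmatrix}$ holds unconditionally, the reduction via Lemma \ref{lemalr} is legitimate, and your computation $\mul \ol{R}=(\dom R^*)^{\perp}\cap\St=(\dom b^*)^{\perp}\cap\St=\mul b=\mul a+\mul b=\mul R$ is correct.

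The genuine gap is the decisive step $\dom\ol{R}\subseteq\dom a\oplus\dom b$, which is never proved, and the mechanism you offer for it would not close it. Knowing that the ranges of the operator parts of $a$ and $b$ lie in the single closed subspace $\dom a^*$ does nothing to separate a convergent sum: for $(u_n,v_n)\in R$ one has $v_n=w_n+z_n$ with $w_n\in a(P_{\St}u_n)$ and $z_n\in b(P_{\St^{\perp}}u_n)$, but both $w_n$ and $z_n$ live in $\St$, and membership of both in one common closed subspace gives no control on either summand. The actual role of the hypothesis that $\dom a^*$ is closed is different: since $a$ is closed, $(a^{-1})^*=(a^*)^{-1}$ together with the closed range theorem for closed relations shows that $\dom a=\ran a^{-1}$ is closed, and hence the operator part $a_0$ of $a$ is a closed operator with closed domain, i.e.\ bounded. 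Then $a_0P_{\St}u_n\to a_0P_{\St}u$, and since you already showed $\mul a\subseteq\mul b$, one gets $v_n-a_0P_{\St}u_n\in b(P_{\St^{\perp}}u_n)$; closedness of $b$ yields $(P_{\St^{\perp}}u,\,v-a_0P_{\St}u)\in b$, whence $(u,v)\in R$. Note that this proves closedness of $R$ directly, so the Arens scaffolding becomes unnecessary. Your alternative route via Proposition \ref{Deutch} does work, but there as well the content is missing: one must write $R=a \ \hat{+} \ b$ as a sum of two closed subspaces of $\HH\times\St$ and carry out the computation $a^{\perp}+b^{\perp}=\HH\times(\dom a^*+\dom b^*)$, so that closedness of $R$ is equivalent to closedness of $\dom a^*+\dom b^*=\dom a^*$. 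That identification of the sum of orthogonal complements is the whole point of the Deutsch approach; it is only asserted, not supplied, and once one has it the adjoint/Arens apparatus is again superfluous.
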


\begin{prop} \label{starMR} Let $\St$ be a closed subspace of $\HH,$ $a \in \lr(\St),$ $b\in \lr(\St^{\perp},\St),$ $c \in \lr(\St,\St^{\perp}),$ $d \in \lr(\St^{\perp})$ and 
	$$T=\matriz{a}{b}{c}{d}_{\St}.$$
	If $\dom a\subseteq \dom c,$ $\clmul c=\mul \ol{c},$ $\dom \ol{c}$ is closed , $\dom b\subseteq \dom d,$ $\clmul d=\mul \ol{d}$ and $\dom \ol{d}$ is closed then
	$$T^*=\matriz{a^*}{c^*}{b^*}{d^*}_{\St}.$$ 
\end{prop}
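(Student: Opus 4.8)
The plan is to compute $T^*$ directly from the adjoint of a block matrix by exploiting the product and sum formulas already collected. First I would write $T$ as a sum of two terms using the column/row decomposition from Lemma \ref{producto}'s setup: namely
\begin{equation*}
T=\begin{pmatrix} a \\ c \end{pmatrix} P_\St + \begin{pmatrix} b \\ d \end{pmatrix} P_{\St^\perp},
\end{equation*}
so that $T$ is built from the two columns $C_1:=\left(\begin{smallmatrix} a\\c\end{smallmatrix}\right)$ and $C_2:=\left(\begin{smallmatrix} b\\d\end{smallmatrix}\right)$ composed with the (bounded, everywhere-defined) orthogonal projections. Taking adjoints, I would use the operator-factor case of Lemma \ref{productolema} (equality in \eqref{product} holds whenever one factor is a bounded operator) to move the projections outside: the adjoint of $C_1 P_\St$ should be $P_\St C_1^*$ and similarly for the second term, and the adjoint of a sum of such pieces recombines into the claimed block matrix via Lemma \ref{suma}. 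The target identity then amounts to showing $C_1^* = \begin{pmatrix} a^* & c^* \end{pmatrix}$ and $C_2^* = \begin{pmatrix} b^* & d^* \end{pmatrix}$, i.e. that the adjoint of a column is the row of the adjoints.

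The crux is therefore the column-to-row adjoint formula, and this is exactly where the hypotheses enter. By definition $C_1=\left(\begin{smallmatrix} a\\c\end{smallmatrix}\right)$ satisfies $C_1 \supseteq$ (or relates to) the componentwise behavior, and one always has the easy inclusion $\left(\begin{smallmatrix} a^*\\c^*\end{smallmatrix}\right)^{\!*}$-type containment in one direction from \eqref{product}. To get equality $C_1^*=\begin{pmatrix} a^* & c^*\end{pmatrix}$ I would invoke Lemma \ref{rcclosed}: the conditions $\dom a \subseteq \dom c$, $\clmul c=\mul\ol c$ and $\dom\ol c$ closed are precisely tailored so that the relevant column (here the column $\left(\begin{smallmatrix} a^*\\c^*\end{smallmatrix}\right)$, whose blocks are the adjoints $a^*$ and $c^*$) is closed and the row $\begin{pmatrix} a^* & c^*\end{pmatrix}$ is closed via part 2 of that lemma, since closedness of $\dom\ol c=\dom c^{**}$ and the domain inclusion translate into the hypotheses $\dom (c^*)^* \subseteq \dom(a^*)^*$ and closedness of the relevant adjoint domain. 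I would then apply Lemma \ref{lemalr} to upgrade the one-sided inclusion to equality, checking the domain and multivalued containments by hand using $\mul T^*=(\dom T)^\perp$ and $\ker T^*=(\ran T)^\perp$. The symmetric hypotheses on $b,d$ handle $C_2$ identically.

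The main obstacle I anticipate is bookkeeping the closedness and domain conditions so that Lemma \ref{rcclosed}(2) and the equality case of Lemma \ref{productolema} both apply cleanly to the \emph{adjoint} blocks rather than the original ones. Concretely, Lemma \ref{rcclosed}(2) requires $\dom b^*\subseteq\dom a^*$ with $\dom a^*$ closed \emph{for the row being formed}; after transposing to the adjoint side this becomes a statement about $\dom a^{**}=\dom\ol a$ versus $\dom c^{**}=\dom\ol c$, and verifying that the stated hypotheses ($\dom a\subseteq\dom c$ together with the $\mul$ and $\dom$ closedness conditions) deliver exactly the inclusion and closedness needed is the delicate point. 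Once those hypotheses are matched to the lemma, I expect the forward inclusion to be formal from \eqref{product} and the reverse to follow from the equality-case lemmas, so the remaining work is the routine verification via Lemma \ref{lemalr} that the generated relation $\left(\begin{smallmatrix} a^* & c^*\\ b^* & d^*\end{smallmatrix}\right)_\St$ and $T^*$ share domain and multivalued part.
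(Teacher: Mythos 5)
Your overall architecture --- reduce the statement to a column-to-row adjoint formula $\bigl(\begin{smallmatrix} a\\ c\end{smallmatrix}\bigr)^{*} = \begin{pmatrix} a^* & c^*\end{pmatrix}$ --- is the right one, and it is essentially the content of the Hassi--Labrousse--de Snoo results that the paper's proof simply cites (the paper's entire proof is the citation of \cite[Lemma 4.1 and Corollary 6.1]{HassiMR}). But as written your argument has gaps at both of its hinges. First, the reduction step: the adjoint does not distribute over the sum $+$ of linear relations, and Lemma \ref{suma} says nothing about adjoints, so $(C_1P_\St + C_2P_{\St^\perp})^* = (C_1P_\St)^* + (C_2P_{\St^\perp})^*$ is unjustified as stated. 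Likewise, Lemma \ref{productolema} gives equality in \eqref{product} only when the \emph{left} factor $R$ of the product $RT$ is bounded; in $(C_1P_\St)^*$ the bounded projection sits on the right, and with a bounded right factor equality is false in general (take the product $R\,0$ with $\dom R^*$ not closed: then $(R\,0)^*=(\mul R)^\perp\times\{0\}$ while $0^*R^*=\dom R^*\times\{0\}$). Both identities you need do hold here, but for structural reasons your proof never touches: the block matrix is exactly the \emph{componentwise} sum $T = C_1\ \hat{+}\ C_2$ of the two columns viewed inside $\HH\times\HH$, and for componentwise sums one always has $(C_1\ \hat{+}\ C_2)^* = C_1^* \cap C_2^*$; unravelling this intersection, using that the values of $C_1^*$ and $C_2^*$ lie in $\St$ and $\St^\perp$ respectively, is what actually produces the transposed block matrix.

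Second, and more seriously, the crux is deferred rather than proved. The inclusion $\begin{pmatrix} a^* & c^*\end{pmatrix} \subseteq C_1^*$ is indeed easy, and since $\dom a\subseteq\dom c$ gives $\dom C_1=\dom a$, the multivalued parts behave ($\mul C_1^* = \St\ominus\ol{\dom a} = \mul a^* \subseteq \mul a^*+\mul c^*$); so by Lemma \ref{lemalr} everything rests on the domain inclusion $\dom C_1^* \subseteq \dom a^* \oplus \dom c^*$, which is precisely where the hypotheses $\clmul c = \mul\ol{c}$ and ``$\dom\ol{c}$ closed'' must do their work. You explicitly set this aside as ``the delicate point'' and propose Lemma \ref{rcclosed}(2) as the tool, but that lemma only asserts \emph{closedness} of a row, which by itself does not yield the domain inclusion; worse, applied to the row $\begin{pmatrix} a^* & c^*\end{pmatrix}$ its hypotheses read $\dom(c^*)^* = \dom\ol{c} \subseteq \dom\ol{a} = \dom(a^*)^*$ with $\dom\ol{a}$ closed --- the \emph{opposite} containment to the assumption $\dom a\subseteq\dom c$, and closedness of the wrong domain ($\dom\ol{a}$ rather than the assumed $\dom\ol{c}$). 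So the proposition's hypotheses do not feed into Lemma \ref{rcclosed}(2) the way you claim, and the analytic heart of the statement --- the content of \cite[Corollary 6.1]{HassiMR} --- is missing from the proposal.
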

\begin{proof}
	It follows by \cite[Lemma 4.1 and Corollary 6.1]{HassiMR}.
\end{proof}

\subsection{Multivalued projections}

\begin{Def} Given $E \in \lr(\HH),$ we say that $E$ is a \emph{multivalued projection} if $E^2=E$ and $\ran E \subseteq \dom E;$ if $E$ is a multivalued projection with $\mul E=\{0\},$ then $E$ is a \emph{projection}. 
\end{Def}

Multivalued projections (or \emph{semi-projections}) were studied in \cite{Cross} and \cite{Labrousse}. Therein,  it was proved that a multivalued projection is determined by its range and kernel. More precisely, if $\Sp(\HH)$ denotes the set of multivalued projections, then

\begin{prop}[\cite{Cross,Labrousse}] \label{L1} $E \in \Sp(\HH)$ if and only if 
	$$E=I_{\ran E} \ \hat{+} \ (\ker E \times \{0\}).$$
\end{prop}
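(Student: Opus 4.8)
The plan is to prove the two implications of Proposition \ref{L1} directly from the definition of a multivalued projection, using Lemma \ref{lemalr} to handle the subtle point that equality of linear relations requires matching not just the inclusion of graphs but also the domains and multivalued parts.

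\smallskip

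For the forward implication, suppose $E \in \Sp(\HH)$, so that $E^2 = E$ and $\ran E \subseteq \dom E$. First I would show the inclusion $E \supseteq I_{\ran E} \ \hat{+} \ (\ker E \times \{0\})$. Take $(y,y) \in I_{\ran E}$ with $y \in \ran E$; since $\ran E \subseteq \dom E$ there is some $(x,y) \in E$, and then $(y, \cdot) \in E \cdot E = E^2 = E$ forces $(y,y) \in E$ (more precisely, $(x,y)\in E$ and idempotency give $(y,y)\in E$ via the product $E = E^2$). For $(k,0) \in \ker E \times \{0\}$ we have $(k,0)\in E$ by definition of the kernel. Since $E$ is a subspace, the sum of these two pieces lies in $E$, giving the inclusion. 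For the reverse inclusion I would take an arbitrary $(x,y)\in E$ and write $x$ in terms of the decomposition suggested by the statement: using $\ran E \subseteq \dom E$ and idempotency, $y \in \ran E$ yields $(y,y) \in I_{\ran E} \subseteq E$, so $(x-y, 0) = (x,y) - (y,y) \in E$, whence $x - y \in \ker E$ and $(x-y,0) \in \ker E \times \{0\}$. Thus $(x,y) = (y,y) + (x-y,0)$ exhibits $E \subseteq I_{\ran E} \ \hat{+} \ (\ker E \times \{0\})$, and the two relations coincide.

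\smallskip

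For the converse, assume $E = I_{\ran E} \ \hat{+} \ (\ker E \times \{0\})$ where I now write $\R := \ran E$ and $\N := \ker E$; I must verify that $E$ so defined is idempotent and satisfies $\ran E \subseteq \dom E$, and that its range and kernel are indeed $\R$ and $\N$. Computing directly, $\dom E = \R + \N$ and $\ran E = \R$ (the $\N \times \{0\}$ summand contributes nothing to the range), so $\ran E = \R \subseteq \R + \N = \dom E$ is immediate. For idempotency I would compute $E^2$ from the product definition: given $(x,y)\in E$ and $(y,z)\in E$, decompose $y = y \in \R$ so that $(y,y)\in I_\R \subseteq E$ shows $z$ can be taken equal to $y$, and conversely any $(x,y)\in E$ already composes with $(y,y)$ to give $(x,y)\in E^2$; this yields $E \subseteq E^2$ and $E^2 \subseteq E$ after checking the multivalued and domain conditions via Lemma \ref{lemalr}.

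\smallskip

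The main obstacle I anticipate is the bookkeeping around the multivalued part $\mul E = \R \cap \N$ and the fact that equality of linear relations is \emph{not} automatic from a single inclusion of graphs: when I argue $E^2 = E$ I expect to obtain one inclusion cleanly but to need Lemma \ref{lemalr} to promote it to equality, which requires separately checking $\dom E \subseteq \dom E^2$ and $\mul E \subseteq \mul E^2$. Care is needed because $(0,v)\in E$ precisely when $v \in \mul E = \R \cap \N$, and these nonzero ``multivalued'' pairs must be tracked through both the sum defining $E$ and the product defining $E^2$; verifying that they match on both sides is the delicate step, whereas the single-valued part of the argument is the routine direct-sum computation sketched above.
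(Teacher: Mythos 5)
The paper itself offers no proof of Proposition \ref{L1} to compare against: it is quoted as a known result from \cite{Cross} and \cite{Labrousse}. Judged on its own, your plan is correct and completes to a valid proof, but the two steps you treat as routine are exactly where the content lies, and both need to be written out. In the forward direction, the parenthetical ``$(x,y)\in E$ and idempotency give $(y,y)\in E$ via the product $E=E^2$'' is not yet an argument: from $(x,y)\in E=E^2$ you only obtain some intermediate $w$ with $(x,w)\in E$ and $(w,y)\in E$. The correct chain uses $\ran E\subseteq \dom E$ in an essential way: for $y\in\ran E$ pick $(x,y)\in E$ and, since $y\in\dom E$, also $(y,z)\in E$ for some $z$; composing gives $(x,z)\in E^2=E$, hence $(0,z-y)=(x,z)-(x,y)\in E$, and therefore $(y,y)=(y,z)-(0,z-y)\in E$. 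Once $I_{\ran E}\subseteq E$ is established, your reverse inclusion $(x,y)=(y,y)+(x-y,0)$ is exactly right.

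In the converse, your phrase ``$z$ can be taken equal to $y$'' conflates the two inclusions: composing $(x,y)\in E$ with $(y,y)\in I_{\R}\subseteq E$ gives only $E\subseteq E^2$. For $E^2\subseteq E$ you can either compute directly --- if $(x,y)=(r_1+n_1,r_1)$ and $(y,z)=(r_2+n_2,r_2)$ with $r_i\in\R$, $n_i\in\N$, then $z=r_2\in\R$ and $x=z+(n_1+n_2)$ with $n_1+n_2\in\N$, so $(x,z)\in E$ --- or follow your Lemma \ref{lemalr} route, which does close: $\dom E^2\subseteq\dom E$ holds for any composition, and by Lemma \ref{propbasicas} one has $\mul E^2=E(\mul E)$ with $\mul E=\R\cap\N$, while for $v\in\R\cap\N$ the set $Ev=\{r\in\R : v-r\in\N\}$ equals $\R\cap\N$, so $\mul E^2=\mul E$. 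With these two steps made explicit, your outline becomes a complete and correct proof.
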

It follows from the above formula that $\dom E= \ran E + \ker E$ and $\mul E=\ran E \cap \ker E.$

Given $\M, \N$ subspaces of $\HH$  write  $$P_{\M, \N}:=I_\M \ \hat{+} \ (\N \times \{0\}).$$ Then
$$\Sp(\HH)=\{P_{\M,\N} : \M, \N \mbox{ are subspaces of } \HH\}.$$

If $\PMN$ is a projection, we write $P_{\M \pl \N},$ and if $\N=\M^{\perp},$ we write $P_\M.$

The closure and the adjoint of a multivalued projection are again multivalued projections, as the following formulae show \cite{Cross, Labrousse}. Denote by $\SP(\HH)$ the set of closed multivalued projections.

\begin{prop} \label{L2} Given $\M, \N$ subspaces, it hods that  $$P_{\M, \N}^*=P_{\N^{\perp},\M^{\perp}} \mbox{ and } \ol{P_{\M, \N}}=P_{\ol{\M}, \ol{\N}}.$$ Then, $P_{\M, \N} \in \MP(\HH)$ if and only if $\M$ and $\N$ are closed.
\end{prop}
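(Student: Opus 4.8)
The plan is to establish the two formulas for $P_{\M,\N}^*$ and $\ol{P_{\M,\N}}$, and then derive the closedness criterion as a consequence of the closure formula together with Proposition~\ref{L1}. Recall that by definition $P_{\M,\N}=I_\M \ \hat{+}\ (\N\times\{0\})$, so explicitly $P_{\M,\N}=\{(m+n,m): m\in\M,\ n\in\N\}$ as a subspace of $\HH\times\HH$. Having this concrete description of the graph is the key to all three assertions.

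First I would compute the adjoint. Using $P_{\M,\N}^*=\{(u,v): \PI{g}{u}=\PI{f}{v}\text{ for all }(f,g)\in P_{\M,\N}\}$, I substitute $f=m+n$ and $g=m$ with $m\in\M,\ n\in\N$ ranging freely. The defining condition becomes $\PI{m}{u}=\PI{m+n}{v}$ for all $m\in\M,\ n\in\N$. Setting $m=0$ forces $\PI{n}{v}=0$ for all $n\in\N$, i.e. $v\in\N^\perp$; given that, the condition reduces to $\PI{m}{u}=\PI{m}{v}$ for all $m\in\M$, i.e. $u-v\in\M^\perp$. Thus $(u,v)\in P_{\M,\N}^*$ iff $v\in\N^\perp$ and $u-v\in\M^\perp$, which is exactly the graph of $P_{\N^\perp,\M^\perp}=\{(v+w,v): v\in\N^\perp,\ w\in\M^\perp\}$ after writing $u=v+w$ with $w\in\M^\perp$. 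This gives the first identity.

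Next I would obtain the closure formula. The quickest route is to use the already-established adjoint formula together with the fact, stated in the preliminaries, that $\ol{T}=T^{**}$. Applying the adjoint formula twice, $\ol{P_{\M,\N}}=P_{\M,\N}^{**}=(P_{\N^\perp,\M^\perp})^*=P_{(\M^\perp)^\perp,(\N^\perp)^\perp}=P_{\ol\M,\ol\N}$, since $(\M^\perp)^\perp=\ol\M$ and $(\N^\perp)^\perp=\ol\N$. (Alternatively one can compute $\ol{P_{\M,\N}}$ directly as the closure of the graph $\{(m+n,m)\}$, but the $T^{**}$ argument is cleaner and avoids a separate density computation.)

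Finally, the closedness criterion follows by combining the two formulas. If $\M$ and $\N$ are closed, then $\ol{P_{\M,\N}}=P_{\ol\M,\ol\N}=P_{\M,\N}$, so $P_{\M,\N}$ is closed. Conversely, if $P_{\M,\N}$ is closed, then $P_{\M,\N}=\ol{P_{\M,\N}}=P_{\ol\M,\ol\N}$; reading off the range and kernel (which by Proposition~\ref{L1} determine the multivalued projection, with $\ran P_{\M,\N}=\M$ and $\ker P_{\M,\N}=\N$) forces $\M=\ol\M$ and $\N=\ol\N$, i.e. both subspaces are closed. The main subtlety to be careful about is the converse direction: one must invoke the fact that a multivalued projection is uniquely determined by its range and kernel, so that the equality $P_{\M,\N}=P_{\ol\M,\ol\N}$ genuinely forces $\M=\ol\M$ and $\N=\ol\N$ rather than merely some coincidence of graphs; this is precisely what Proposition~\ref{L1} guarantees, since it shows $\ran$ and $\ker$ recover the relation.
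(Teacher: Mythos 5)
Your proof is correct. Note that the paper does not actually prove Proposition \ref{L2}: it quotes the two formulas from \cite{Cross} and \cite{Labrousse}, so there is no in-paper argument to compare yours against. Your computation is a valid self-contained verification: the adjoint identity falls out of the definition of $T^*$ applied to the graph $P_{\M,\N}=\{(m+n,m): m\in\M,\ n\in\N\}$, the closure formula follows from $\ol{T}=T^{**}$ together with two applications of the adjoint identity and $(\M^{\perp})^{\perp}=\ol{\M}$, and the closedness criterion then reduces to comparing ranges and kernels. One small remark: in the converse direction you do not really need the uniqueness statement of Proposition \ref{L1}; equality of the relations $P_{\M,\N}=P_{\ol{\M},\ol{\N}}$ as subspaces of $\HH\times\HH$ already forces equality of their ranges and of their kernels, and the identities $\ran P_{\M,\N}=\M$, $\ker P_{\M,\N}=\N$ are immediate from the graph description, so $\M=\ol{\M}$ and $\N=\ol{\N}$ follow directly.
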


\bigskip

We say that $T\in \lr(\HH,\KK)$ is \emph{decomposable} if $T$  admits the componentwise sum decomposition
\begin{equation} \label{Tdecom}
	T=T_0 \ \hat{\oplus} \ T_{\mul},
\end{equation} 
where  $T_0:=T \cap (\cldom T \times \cldom T^*)$ and $T_{\mul}:=\{0\} \times \mul T.$ When $T \in \lr(\HH)$ is closed, $\mul T=(\dom T^*)^{\perp}$ and $T$ is decomposable, see \cite{Hassi2}.
In this case, $T_{\mul}$ is a closed linear relation and the operator part $T_0$ is a densely defined  closed operator from $\cldom T$ to $\cldom T^*$  with 
$\dom T_0=\dom T$ and $\ran T_0 \subseteq  \cldom T^*.$ 

\begin{prop}\label{PyI-P} Let $P_{\M, \N}\in \MP(\HH).$ Then
	$P_{\M, \N}$ is decomposable and 
	$$P_{\M, \N}=P_{\M \ominus (\M\cap \N) \pl \N} \ \hat{\oplus} \ (\{0\} \times \M \cap \N).$$
\end{prop}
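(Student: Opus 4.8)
The plan is to exhibit the canonical decomposition of $P_{\M,\N}$ explicitly and then read off the two summands in the stated formula. Since $P_{\M,\N}\in\MP(\HH)$ it is closed, so by Proposition \ref{L2} both $\M$ and $\N$ are closed; and as a closed relation in $\lr(\HH)$ it is automatically decomposable by the cited fact (\cite{Hassi2}). Hence $P_{\M,\N}=T_0\ \hat{\oplus}\ T_{\mul}$ with $T_0=P_{\M,\N}\cap(\cldom P_{\M,\N}\times\cldom P_{\M,\N}^*)$ and $T_{\mul}=\{0\}\times\mul P_{\M,\N}$, and the whole task reduces to identifying these two pieces.

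For the multivalued part, the formula following Proposition \ref{L1} gives $\mul P_{\M,\N}=\M\cap\N$, so $T_{\mul}=\{0\}\times(\M\cap\N)$, which is exactly the second summand in the claim. For the operator part I would first invoke, for closed $T$, the identity $\mul T=(\dom T^*)^\perp$, so that $\cldom P_{\M,\N}^*=(\mul P_{\M,\N})^\perp=(\M\cap\N)^\perp$. Now every element of $P_{\M,\N}=I_\M\ \hat{+}\ (\N\times\{0\})$ has the form $(m+n,m)$ with $m\in\M$ and $n\in\N$, and its first entry automatically lies in $\dom P_{\M,\N}\subseteq\cldom P_{\M,\N}$; consequently such an element belongs to $T_0$ precisely when its second entry $m$ lies in $(\M\cap\N)^\perp$, that is, when $m\in\M_0:=\M\ominus(\M\cap\N)$. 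This yields $T_0=\{(m+n,m):m\in\M_0,\ n\in\N\}=P_{\M_0,\N}$.

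To recognize $P_{\M_0,\N}$ as a genuine projection I note that $\M_0\subseteq\M$ forces $\M_0\cap\N\subseteq\M\cap\N$, while $\M_0\subseteq(\M\cap\N)^\perp$, so $\M_0\cap\N\subseteq(\M\cap\N)\cap(\M\cap\N)^\perp=\{0\}$; thus $\mul P_{\M_0,\N}=\M_0\cap\N=\{0\}$ and $T_0=P_{\M_0\pl\N}$, in agreement with decomposability forcing $T_0$ to be an operator. Assembling the two summands gives $P_{\M,\N}=P_{\M\ominus(\M\cap\N)\pl\N}\ \hat{\oplus}\ (\{0\}\times\M\cap\N)$, as claimed. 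I expect no serious obstacle: the only steps needing care are the identification $\cldom P_{\M,\N}^*=(\M\cap\N)^\perp$ through the adjoint formula, and the observation that intersecting with $\cldom P_{\M,\N}\times\cldom P_{\M,\N}^*$ imposes nothing on the first coordinate and merely the condition $m\perp\M\cap\N$ on the second; both are routine once the closedness of $\M$ and $\N$ is secured.
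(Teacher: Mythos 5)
Your proof is correct, but it follows a genuinely different route from the paper's. The paper first defines the candidate $Q:=P_{\M\ominus(\M\cap\N)\pl\N}$, checks that it is an operator with $\dom Q=\M+\N=\dom P_{\M,\N}$ and $\ran Q\subseteq(\M\cap\N)^{\perp}$, verifies the inclusion $P_{\M,\N}\subseteq Q\ \hat{\oplus}\ (\{0\}\times\M\cap\N)$, and then obtains equality from Arens' lemma (Lemma \ref{lemalr}) by comparing domains and multivalued parts; only in a final sentence does it observe that $\ran Q\subseteq\ol{\M^{\perp}+\N^{\perp}}=\cldom P_{\M,\N}^{*}$, so that $Q$ coincides with the canonical operator part $(P_{\M,\N})_0$. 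You instead invoke the decomposition theorem for closed relations at full strength, $P_{\M,\N}=T_0\ \hat{\oplus}\ T_{\mul}$ with $T_0=P_{\M,\N}\cap(\cldom P_{\M,\N}\times\cldom P_{\M,\N}^{*})$, and compute both pieces directly, using $\cldom P_{\M,\N}^{*}=(\mul P_{\M,\N})^{\perp}=(\M\cap\N)^{\perp}$ to see that membership in $T_0$ is exactly the condition that the second coordinate (which is uniquely determined for each element of $P_{\M,\N}$) lie in $\M\cap(\M\cap\N)^{\perp}=\M\ominus(\M\cap\N)$. What your route buys is economy: the displayed formula and the identification of the operator part come out in one stroke, with no need for Arens' lemma, and the computation makes transparent why the operator part is precisely $P_{\M\ominus(\M\cap\N)\pl\N}$. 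What the paper's route buys is a lighter dependence on the cited literature: it uses decomposability only as the bare statement that $P_{\M,\N}$ decomposes, and establishes the identity by hand, so the specific form of the canonical summands is re-derived rather than assumed. Both arguments rest on the same preliminaries: closedness of $\M$ and $\N$ from Proposition \ref{L2}, the identity $\mul P_{\M,\N}=\M\cap\N$, and the observation that $(\M\ominus(\M\cap\N))\cap\N=\{0\}$, which is what makes the operator part a genuine projection.
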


\begin{proof} Since $\M$ and $\N$ are closed,  $P_{\M, \N}$ is closed and then decomposable, and  $\M=\M \ominus (\M\cap \N) \oplus \M \cap\N.$ Then $Q:=P_{\M \ominus (\M\cap \N) \pl \N}$ is an operator with $\dom Q=\M \ominus (\M\cap \N) \dotplus \N=\M+\N=\dom \PMN$ and $\ran Q \subseteq (\M \cap\N)^\bot.$ Also, it is easy to check that $P_{\M, \N}\subseteq Q \ \hat{\oplus} \ (\{0\} \times \M \cap \N)$ and since $\mul(Q \ \hat{\oplus} \ (\{0\} \times \M \cap \N))=\M\cap \N,$ by Lemma \ref{lemalr}, the result follows. Since $\ran Q \subseteq \ol{\M^{\perp}+\N^{\perp}}=\cldom \PMN^*$, then $(P_{\M, \N})_0=Q.$ 
\end{proof}

\section{Matrix representation of multivalued projections} \label{three}
In what follows we characterize those multivalued projections admitting matrix representations with respect to the closure of their ranges. Our goal is to get a representation similar to the standard matrix representation of a bounded projection onto its range
\begin{equation}\label{mstandard}
	\matriz{I}{x}{0}{0},
\end{equation} see for example \cite{Ando}. Under this matrix representation, several properties of a projection can be obtained by studying the $(1,2)$-coefficient of the matrix, see for example 
\cite{And}. Even though the matrix representation of relations is not unique, a representation similar to that of  \eqref{mstandard} can be given when the range of the multivalued projection is closed.

\begin{lema}\label{lemarepSp} Let $\M, \N$ be subspaces of $\HH.$ Then the  following are equivalent:
	\begin{enumerate}
		\item $P_{\M,\N}$ admits a matrix representation with respect to $\overline{\M};$
		\item $P_{\overline{\M}}(\M+\N)\subseteq \M+\N;$
		\item $\M+\N=(\M+\ol{\M}\cap\N) \oplus (\M+\N)\cap \M^{\perp};$
		\item $P_{\M^{\perp}}(\N)=(I-P_\M)(\N).$
	\end{enumerate} 
\end{lema}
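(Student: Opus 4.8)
The plan is to treat condition (ii) as the hub: first derive (i)$\Leftrightarrow$(ii) from the general matrix-representation criterion, and then relate (ii) to (iii) and to (iv) by direct subspace manipulations. Throughout I use Proposition \ref{L1}, which gives $\ran P_{\M,\N}=\M$, $\dom P_{\M,\N}=\M+\N$ and $\mul P_{\M,\N}=\M\cap\N$; in particular the closure of the range is $\ol{\M}$, so this is the closed subspace to which Theorem \ref{theoMLR} must be applied.

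For (i)$\Leftrightarrow$(ii) I would apply Theorem \ref{theoMLR} to $T=P_{\M,\N}$ with $\St=\ol{\M}$. Condition (i) is then equivalent to the two inclusions $P_{\ol{\M}}(\M+\N)\subseteq\M+\N$ and $P_{\ol{\M}}(\M\cap\N)\subseteq\M\cap\N$. The second one is automatic: since $\M\cap\N\subseteq\M\subseteq\ol{\M}$, the orthogonal projection $P_{\ol{\M}}$ fixes $\M\cap\N$ pointwise, so $P_{\ol{\M}}(\M\cap\N)=\M\cap\N$. Hence (i) collapses exactly to (ii).

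For (ii)$\Leftrightarrow$(iii) I would invoke Lemma \ref{lemasubs} with the roles ``$\M$''$=\M+\N$ and $\St=\ol{\M}$ (noting $\ol{\M}^{\perp}=\M^{\perp}$). Its item (1) is precisely (ii), and its item (2) reads $\M+\N=\big(\ol{\M}\cap(\M+\N)\big)\oplus\big(\M^{\perp}\cap(\M+\N)\big)$. To match this with (iii) it suffices to establish the identity $\ol{\M}\cap(\M+\N)=\M+\ol{\M}\cap\N$: the inclusion $\supseteq$ is immediate, while for $\subseteq$ one writes $x=m+n\in\ol{\M}$ with $m\in\M,\ n\in\N$ and observes that $n=x-m\in\ol{\M}$, hence $n\in\ol{\M}\cap\N$. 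Substituting this identity into item (2) yields (iii).

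For (ii)$\Leftrightarrow$(iv) I would first simplify (ii). Since $P_{\ol{\M}}(\M+\N)=\M+P_{\ol{\M}}(\N)$ and $P_{\ol{\M}}=I-P_{\M^{\perp}}$, condition (ii) is equivalent to $P_{\M^{\perp}}(\N)\subseteq\M+\N$. Here $P_\M$ is to be read as the orthogonal projection $P_{\M,\M^{\perp}}$ onto $\M$ along $\M^{\perp}$, a genuine operator with domain $\M\oplus\M^{\perp}$ (recall $\M$ need not be closed), so that $(I-P_\M)(\N)=P_{\M^{\perp}}\big(\N\cap(\M\oplus\M^{\perp})\big)$, which is always contained in $P_{\M^{\perp}}(\N)$; thus (iv) is really the reverse inclusion. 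The equivalence then rests on the elementary remark that if $P_{\M^{\perp}}(n)=m+n'\in\M+\N$ then $n'=P_{\M^{\perp}}(n)-m\in\N\cap(\M\oplus\M^{\perp})$ with $P_{\M^{\perp}}(n')=P_{\M^{\perp}}(n)$, and conversely any $n'\in\N\cap(\M\oplus\M^{\perp})$ satisfies $P_{\M^{\perp}}(n')\in\M+\N$. I expect the main obstacle to be exactly this step: correctly pinning down the meaning of $P_\M$ for non-closed $\M$ and doing the attendant domain bookkeeping, whereas the identity feeding (iii) and the vanishing of the $\mul$-condition in (i)$\Leftrightarrow$(ii) are routine once set up.
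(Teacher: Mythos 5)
Your proposal is correct and follows essentially the same route as the paper: both use condition (ii) as the hub, derive (i)$\Leftrightarrow$(ii) from Theorem \ref{theoMLR} after noting the $\mul$-inclusion is automatic, get (ii)$\Leftrightarrow$(iii) from Lemma \ref{lemasubs} (you merely make explicit the identity $\ol{\M}\cap(\M+\N)=\M+\ol{\M}\cap\N$ that the paper leaves implicit), and establish (ii)$\Leftrightarrow$(iv) by the same elementary decomposition argument with $P_\M$ understood as the projection with domain $\M\oplus\M^{\perp}$. The only cosmetic difference is that you first reduce (ii) to $P_{\M^{\perp}}(\N)\subseteq\M+\N$ before comparing with (iv), whereas the paper passes through $P_{\M^{\perp}}(\N)=(\M+\N)\cap\M^{\perp}$; the substance is identical.
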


\begin{proof}	$i) \Leftrightarrow ii)$: From Theorem \ref{theoMLR}, $i)$ holds if and only if $P_{\overline{\M}}(\dom P_{\M, \N})\subseteq\dom P_{\M, \N}$ and $P_{\overline{\M}}(\mul P_{\M, \N})\subseteq\mul P_{\M, \N}.$ Since the second inclusion is automatic, the equivalence follows.
	
	$ii) \Leftrightarrow iii)$: If $ii)$ holds then $iii)$ follows from Lemma \ref{lemasubs}. The converse is straightforward.
	
	$ii) \Leftrightarrow iv)$: If $ii)$ holds then  $P_{\M^{\perp}}(\N) =P_{\M^{\perp}}(\M+\N)= (\M+\N)\cap \M^{\perp}.$ Given $w\in P_{\M^{\perp}}(\N)$ write $w=m+n$ for some $m\in \M$ and $n \in \N.$ Then $n=w-m\in \M^{\perp}\oplus \M =\dom P_\M$ and
	$m=-P_\M n$ or $w=(I-P_\M)n.$
	The other inclusion always holds.
	
	Conversely, if $iv)$ holds then  $P_{\overline{\M}}(\N)=(I-P_{\M^{\perp}})(\N)\subseteq \N+(I-P_\M)(\N) \subseteq \M+\N.$ Then $P_{\overline{\M}}(\M+\N)=\M+P_{\overline{\M}}(\N)\subseteq \M+\N.$
\end{proof}

In order to get a matrix representation for $\PMN$ we start by describing the $(1,2)$-coefficient given in \eqref{Acanon},
\begin{equation} \label{eqx0}
	x:=P_{\ol{\M}} \PMN|_{\M^{\perp}}=\PMN \cap (\M^{\perp} \times \HH).
\end{equation}

\begin{lema}\label{lemarepx0} If $\PMN$ admits a matrix representation with respect to $\ol{\M}$ then $\dom x=P_{\M^{\perp}}(\N)$ and
	\begin{equation*}
		x=\{(P_{\M^{\perp}}n,-P_\M n) : n \in \N\}=-P_\M ((I-P_\M)|_{\N})^{-1}.
	\end{equation*}
\end{lema}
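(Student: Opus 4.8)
The plan is to make the relation $x$ fully explicit and then read off both assertions. First I would use Proposition \ref{L1} to write $\PMN=I_\M \ \hat{+} \ (\N \times \{0\})=\{(m+n,m):m\in\M,\ n\in\N\}$, so that by \eqref{eqx0},
$$x=\PMN\cap(\M^{\perp}\times\HH)=\{(m+n,m):m\in\M,\ n\in\N,\ m+n\in\M^{\perp}\}.$$
Setting $u:=m+n$ and $v:=m$, this becomes the convenient description $x=\{(u,v):u\in\M^{\perp},\ v\in\M,\ u-v\in\N\}$, which so far does not use the hypothesis.

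Next I would derive the two closed forms. Given $(u,v)\in x$, put $n:=u-v\in\N$; since $u\in\M^{\perp}$ and $v\in\M$, the vector $n$ lies in $\M\oplus\M^{\perp}=\dom P_\M$, and computing the components gives $P_{\M^{\perp}}n=u$ and $P_\M n=-v$, that is, $(u,v)=(P_{\M^{\perp}}n,-P_\M n)$. Conversely, for every $n\in\N\cap\dom P_\M$ the pair $(P_{\M^{\perp}}n,-P_\M n)$ satisfies the three defining conditions, so $x=\{(P_{\M^{\perp}}n,-P_\M n):n\in\N\}$, where the requirement that $P_\M n$ be defined confines $n$ to $\dom P_\M$. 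Finally I would recognise this set as a product of relations: since $(I-P_\M)n=P_{\M^{\perp}}n$ on $\dom P_\M$, one has $((I-P_\M)|_{\N})^{-1}=\{(P_{\M^{\perp}}n,n):n\in\N\cap\dom P_\M\}$, and composing on the left with $-P_\M$ (in the sense of Section \ref{two}) yields exactly $-P_\M((I-P_\M)|_{\N})^{-1}$.

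It remains to identify $\dom x$, and this is where the hypothesis enters. From the explicit description, $\dom x=\{u\in\M^{\perp}:u\in\M+\N\}=(\M+\N)\cap\M^{\perp}$. By Lemma \ref{lemarepSp} the hypothesis is equivalent to $P_{\ol{\M}}(\M+\N)\subseteq\M+\N$, whence $P_{\M^{\perp}}(\M+\N)=(I-P_{\ol{\M}})(\M+\N)\subseteq\M+\N$ as well; applying Lemma \ref{lemasubs} with the closed subspace $\M^{\perp}$ and the subspace $\M+\N$ then gives $P_{\M^{\perp}}(\M+\N)=(\M+\N)\cap\M^{\perp}$. Since $P_{\M^{\perp}}$ is linear and annihilates $\M$, we have $P_{\M^{\perp}}(\M+\N)=P_{\M^{\perp}}(\N)$, and therefore $\dom x=P_{\M^{\perp}}(\N)$.

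The one genuinely delicate point is the bookkeeping around $\dom P_\M=\M\oplus\M^{\perp}$, which need not be all of $\HH$ when $\M$ is not closed. The argument hinges on the observation that the constraint $m+n\in\M^{\perp}$ automatically places the associated $n$ in $\dom P_\M$, so that $P_\M n$ and the inverse relation $((I-P_\M)|_{\N})^{-1}$ are meaningful; everything else is a routine translation between the subspace description of $x$ and the relation calculus of Section \ref{two}.
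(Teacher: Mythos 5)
Your proof is correct, and its computational core coincides with the paper's: both rest on making $x=\PMN\cap(\M^{\perp}\times\HH)$ explicit as the set of pairs $(m+n,m)$ with $m\in\M$, $n\in\N$, $m+n\in\M^{\perp}$, and on the observation that this constraint automatically places $n$ in $\M\oplus\M^{\perp}=\dom P_\M$, so that $(u,v)=(P_{\M^{\perp}}n,-P_\M n)$. Where you diverge is in how the two identities are clinched. For $x=-P_\M((I-P_\M)|_{\N})^{-1}$ the paper does not compute the composite relation directly: it sets $y:=-P_\M((I-P_\M)|_\N)^{-1}$, checks $\dom y=\dom x$, $\mul y=\mul x$ and $x\subseteq y$, and concludes $x=y$ from Arens' criterion (Lemma \ref{lemalr}), the equality test used throughout the paper. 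Your alternative — observing that $((I-P_\M)|_{\N})^{-1}=\{(P_{\M^{\perp}}n,n):n\in\N\cap\dom P_\M\}$ and that left composition with the operator $-P_\M$ produces exactly $\{(P_{\M^{\perp}}n,-P_\M n):n\in\N\cap\dom P_\M\}$ — is more elementary and self-contained; it works because $-P_\M$ is single-valued, so the product introduces no extra pairs. Similarly, for $\dom x=P_{\M^{\perp}}(\N)$ the paper simply quotes Lemma \ref{lemarepSp} (item iv together with the equality $P_{\M^{\perp}}(\N)=(\M+\N)\cap\M^{\perp}$ obtained in its proof), whereas you re-derive this from item ii via Lemma \ref{lemasubs}; that duplicates work already done in the paper, but it makes your argument independent of the internals of that earlier proof. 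Both routes are sound, and your handling of the one delicate point — that $P_\M n$ is only defined for $n\in\dom P_\M$ — matches the paper's.
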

\begin{proof} By  Lemma \ref{lemarepSp},  $P_{\M^{\perp}}(\N)=(I-P_\M)(\N)=(\M+\N) \cap \M^{\perp}=\dom x.$  To see the first equality, let $(u,v) \in x$ then $u=m+n \in \M^{\perp}$ for some $m\in \M$ and $n\in \N,$ and $v=m.$ Then $u=P_{\M^\bot}u=P_{\M^\bot}n$ and $n=(m+n)-m\in\M^{\perp}\oplus \M=\dom P_\M$ so that
	$P_\M n=-m.$ Hence $(u,v)=(P_{\M^\bot}n,-P_\M n).$ The other inclusion is straightforward.
	
	To prove the second equality, set $y:=-P_{\M}((I-P_\M)|_\N)^{-1}.$ Since $\ran ((I-P_\M)|_\N)^{-1}=\dom((I-P_\M)|_\N) \subseteq \dom P_\M,$ then $\dom y=\dom ((I-P_\M)|_\N)^{-1}=(I-P_\M)(\N)=\dom x.$ On the other hand, $\mul y= P_{\M} (\mul((I-P_\M)|_\N)^{-1})=P_{\M} (\ker(I-P_\M)|_\N)=P_{\M}(\M\cap \N)=\M \cap \N=\mul x.$ 
	Finally, $x \subseteq y.$ In fact, consider $(P_{\M^\bot} n, -P_{\M}n)$ for some $n \in \N \cap (\M\oplus \M^{\perp}).$ Since $P_{\M^{\perp}}|_{\M \oplus \M^{\perp}}=I-P_\M,$ if $n \in \M \oplus \M^{\perp}$ then $(n, P_{\M^{\perp}}n) \in (I-P_\M)|_{\N}$ so that $(P_{\M^{\perp}}n, n)  \in((I-P_\M)|_\N)^{-1}$ and  $(P_{\M^\bot} n, -P_\M n) \in -P_\M ((I-P_\M)|_\N)^{-1}=y.$ Hence, by Lemma \ref{lemalr}, $x=y.$	\end{proof}	
By Proposition \ref{lemarepSp}, if $\M$ is  closed then $\PMN$ always admits a matrix representation with respect to $\M$ similar to that of a bounded projection.

\begin{prop}\label{repSpMcerrado} Let $\M$ be a closed subspace of $\HH.$ Then
	\begin{equation}\label{PMNmatrixclosed}
		P_{\M,\N}=\matriz{I}{x}{0}{0}_{\M}
	\end{equation}
	where $x=-P_\M (P_{\M^{\perp}}|_\N)^{-1}.$
\end{prop}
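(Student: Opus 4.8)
The plan is to identify the $(1,2)$-entry via the results already proved and then verify by a direct computation that the block matrix on the right of \eqref{PMNmatrixclosed} generates precisely $P_{\M,\N}$. Since $\M$ is closed we have $P_{\ol{\M}}=P_\M$ and $I-P_\M=P_{\M^\perp}$ on all of $\HH$; in particular $P_\M(\M+\N)=\M+P_\M(\N)\subseteq\M\subseteq\M+\N$, so condition $ii)$ of Lemma \ref{lemarepSp} holds and $P_{\M,\N}$ does admit a matrix representation with respect to $\ol{\M}=\M$. Lemma \ref{lemarepx0} then yields $\dom x=P_{\M^\perp}(\N)$ together with
$$x=\{(P_{\M^\perp}n,-P_\M n):n\in\N\}=-P_\M\,(P_{\M^\perp}|_\N)^{-1},$$
the last equality because $(I-P_\M)|_\N=P_{\M^\perp}|_\N$. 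This fixes the claimed $(1,2)$-coefficient and reduces the statement to showing that the blocks $a=I_\M$, $b=x$, $c=0$, $d=0$ regenerate $P_{\M,\N}$.

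Next I would unwind the definition of the relation generated by $\matriz{I_\M}{x}{0}{0}_\M$. Reading the lower blocks as the zero operators on $\M$ and $\M^\perp$, the four defining membership conditions force $w_1=x_1$, $w_2=0$ and $z_2=0$, so a generic element is $(x_1+x_2,\,x_1+z_1)$ with $x_1\in\M$ and $(x_2,z_1)\in x$; here intersecting $\dom d$ with $\dom b=\dom x\subseteq\M^\perp$ shows that the exact domain assigned to the zero blocks is immaterial. Inserting the parametrization of $x$ above turns the generic element into $(x_1+P_{\M^\perp}n,\,x_1-P_\M n)$ with $x_1\in\M$ and $n\in\N$.

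Finally I would perform the change of variable $u:=x_1-P_\M n\in\M$. Then $x_1=u+P_\M n$, the first coordinate is $u+P_\M n+P_{\M^\perp}n=u+n$ and the second is $u$, so the element equals $(u+n,u)$ with $u\in\M$, $n\in\N$; reversing the substitution shows every such pair occurs. By the definition $P_{\M,\N}=I_\M\ \hat{+}\ (\N\times\{0\})$ these are exactly the members of $P_{\M,\N}$, which establishes \eqref{PMNmatrixclosed}.

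I expect the main obstacle to be the bookkeeping of the middle step rather than any single hard idea: one must check that the two zero blocks contribute nothing to the second coordinate, and that the simple choices $a=I_\M$ and $d=0$ are admissible even though the canonical blocks furnished by Theorem \ref{theoMLR} carry an extra multivalued part $\M\cap\N$ in the $(1,1)$-slot (absorbed here into $\mul x$) and a smaller domain in the $(2,2)$-slot. The substitution $u=x_1-P_\M n$ must be seen to be a bijection onto $\{(u+n,u):u\in\M,\,n\in\N\}$, not merely onto part of it, so that one obtains equality of relations rather than a one-sided inclusion; should this feel delicate, invoking Lemma \ref{lemalr} on inclusion, domains and multivalued parts is a clean alternative way to finish.
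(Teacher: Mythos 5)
Your proposal is correct and takes essentially the same route as the paper: both unwind the relation generated by the block matrix into elements $(m+P_{\M^\perp}n,\,m-P_\M n)$ with $m\in\M$, $n\in\N$, and exploit the identity $m+P_{\M^\perp}n=(m-P_\M n)+n$ to match them with the elements $(u+n,u)$ of $P_{\M,\N}$. The only cosmetic difference is that the paper proves the single inclusion $E\subseteq P_{\M,\N}$ and closes with Lemma \ref{lemalr} after computing $\dom E=\M+\N$ and $\mul E=\M\cap\N$, whereas you verify both inclusions directly through the change of variables $u=x_1-P_\M n$ (and you yourself note the Lemma \ref{lemalr} finish as an alternative).
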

\begin{proof}  Let $E$ be the linear relation generated by the block matrix \eqref{PMNmatrixclosed}. Since $\dom x=P_{\M^{\perp}}(\N)$ and $\mul x=\M \cap \N,$  $\dom E=\M\oplus\dom x=\M+\N$  and $\mul E=\mul x=\M \cap \N.$
	
	Finally, if $(u,v)\in E$ then there exist $m\in\M$ and $n\in \N$ such that $(u,v)=(m+P_{\M^\bot}n,m-P_{\M}n).$ Then $((m-P_{\M}n)+n, m-P_{\M}n)\in P_{\M,\N}$ and $E\subseteq P_{\M,\N}.$ Hence, by Lemma \ref{lemalr}, $P_{\M,\N}=E.$
\end{proof} 

\begin{cor} \label{estrellaclausura}
	Given $\M, \N$ subspaces, it hods that 
	\begin{equation} \label{MatrixPP}
		\ol{\PMN}=\matriz{I}{x}{0}{0}_{\ol{\M}}, \ \ \ 	\PMN^*=\matriz{I}{0}{x^*}{0}_{\ol{\M}} 
	\end{equation}
	where $x=-P_{\ol{\M}} (P_{\M^{\perp}}|_{\ol{\N}})^{-1}.$
\end{cor}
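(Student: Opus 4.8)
The plan is to obtain both identities from the closed-range case of Proposition \ref{repSpMcerrado} together with the block adjoint formula of Proposition \ref{starMR}, reducing everything to the fact that the lower blocks in \eqref{mstandard} are zero operators over closed spaces.

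First I would treat $\ol{\PMN}$. By Proposition \ref{L2} one has $\ol{\PMN}=P_{\ol{\M},\ol{\N}}$, and since $\ol{\M}$ is closed, Proposition \ref{repSpMcerrado} applies to $P_{\ol{\M},\ol{\N}}$ and gives
$$\ol{\PMN}=P_{\ol{\M},\ol{\N}}=\matriz{I}{x}{0}{0}_{\ol{\M}},\qquad x=-P_{\ol{\M}}(P_{\ol{\M}^{\perp}}|_{\ol{\N}})^{-1}.$$
The only observation needed is that $\ol{\M}^{\perp}=\M^{\perp}$, so that $P_{\ol{\M}^{\perp}}=P_{\M^{\perp}}$ and the $(1,2)$-coefficient coincides with the $x$ in the statement. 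This proves the first identity.

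For the second identity I would use $\PMN^*=(\ol{\PMN})^*$ and apply Proposition \ref{starMR} to the matrix just produced, with blocks $a=I_{\ol{\M}}$, $b=x\in\lr(\M^{\perp},\ol{\M})$, $c=0_{\ol{\M}}$ and $d=0_{\M^{\perp}}$. The hypotheses of that proposition bear only on the lower blocks: I would check $\dom a=\ol{\M}=\dom c$, so $\dom a\subseteq\dom c$, and $\dom b=\dom x\subseteq\M^{\perp}=\dom d$, so $\dom b\subseteq\dom d$; moreover $c$ and $d$ are zero operators over the closed spaces $\ol{\M}$ and $\M^{\perp}$, hence closed with closed domains and with $\clmul c=\mul\ol c=\{0\}$ and $\clmul d=\mul\ol d=\{0\}$. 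All six conditions therefore hold at once, and Proposition \ref{starMR} yields
$$\PMN^*=(\ol{\PMN})^*=\matriz{a^*}{c^*}{b^*}{d^*}_{\ol{\M}}=\matriz{I}{0}{x^*}{0}_{\ol{\M}},$$
using $I^*=I$ and $0^*=0$ in the corners.

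I do not anticipate a serious obstacle. Once the representation of $\ol{\PMN}$ is in hand the second identity is essentially the block transpose of the first, and the only work is the routine verification that the vanishing lower blocks meet the closedness requirements of Proposition \ref{starMR}. The points deserving a line of care are the identification $\ol{\M}^{\perp}=\M^{\perp}$ (to match the displayed $x$) and the bookkeeping of the source and target spaces of the zero blocks, so that $c^*$ is read as an element of $\lr(\M^{\perp},\ol{\M})$, i.e.\ as the $(1,2)$-entry of the adjoint matrix.
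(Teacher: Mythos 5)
Your proposal is correct and follows exactly the paper's own route: Proposition \ref{L2} plus Proposition \ref{repSpMcerrado} for $\ol{\PMN}$, then $\PMN^*=\ol{\PMN}^*$ together with Proposition \ref{starMR} for the adjoint. The only difference is that you spell out the verification of the hypotheses of Proposition \ref{starMR} for the zero lower blocks (and the identification $\ol{\M}^{\perp}=\M^{\perp}$), which the paper leaves implicit.
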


\begin{proof} By Proposition \ref{L2},  $\ol{E}=P_{\ol{\M},\ol{\N}}.$ Then, by Proposition \ref{repSpMcerrado}, $\ol{E}$ has the matrix representation given in \eqref{MatrixPP}. Finally, since $E^*=\ol{E}^*,$ using Proposition \ref{starMR}, we get the matrix representation for $E^*.$
\end{proof}

\begin{cor}
	
	$\PMN$ is closed if and only if $P_{\M,\N}=\matriz{I}{x}{0}{0}_{\ol{\M}}$ with $x$  closed.
\end{cor}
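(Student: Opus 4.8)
The plan is to prove both implications, using repeatedly the identity \eqref{eqx0}: whenever $\PMN$ admits a matrix representation with respect to $\ol{\M}$, its $(1,2)$-coefficient is exactly $x=\PMN \cap (\M^{\perp}\times\HH)$.

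For the forward implication, suppose $\PMN$ is closed. By Proposition \ref{L2} this forces $\M$ and $\N$ to be closed, so $\ol{\M}=\M$ and Proposition \ref{repSpMcerrado} already furnishes the representation $\PMN=\matriz{I}{x}{0}{0}_{\ol{\M}}$. It then remains only to check that $x$ is closed; but by \eqref{eqx0} we have $x=\PMN \cap (\M^{\perp}\times\HH)$, the intersection of the closed relation $\PMN$ with the closed subspace $\M^{\perp}\times\HH$ of $\HH\times\HH$, and is therefore closed.

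For the backward implication, assume $\PMN=\matriz{I}{x}{0}{0}_{\ol{\M}}$ with $x$ closed, and show directly that the relation $E$ generated by this block matrix is closed. Unfolding the definition, $E=\{(a+b,\,a+c): a\in\ol{\M},\ (b,c)\in x\}$, where $b\in\M^{\perp}$ and $c\in\ran x\subseteq\ol{\M}$; equivalently $E=I_{\ol{\M}}\ \hat{+}\ x$ with $x$ regarded as a relation in $\M^{\perp}\times\ol{\M}$. Given $(u_n,v_n)\in E$ with $(u_n,v_n)\to(u,v)$, I would decompose $u_n=a_n+b_n$ orthogonally along $\HH=\ol{\M}\oplus\M^{\perp}$; continuity of $P_{\ol{\M}}$ and $P_{\M^{\perp}}$ yields $a_n\to a:=P_{\ol{\M}}u$ and $b_n\to b:=P_{\M^{\perp}}u$, whence $c_n=v_n-a_n\to v-a=:c$. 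Since $(b_n,c_n)\in x$ and $x$ is closed, $(b,c)\in x$, and therefore $(u,v)=(a+b,\,a+c)\in E$. Thus $E=\PMN$ is closed.

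The main obstacle is precisely the closedness in this second direction: a sum of two closed linear relations need not be closed, so one cannot merely invoke the closedness of the columns $\begin{pmatrix} I \\ 0 \end{pmatrix}$ and $\begin{pmatrix} x \\ 0 \end{pmatrix}$ (each closed by Lemma \ref{rcclosed}). What makes the argument go through is the orthogonality of the domains $\ol{\M}$ and $\M^{\perp}$, which decouples the two summands and lets the continuity of the orthogonal projections split a convergent sequence in $E$ into convergent pieces in $I_{\ol{\M}}$ and in $x$. I would also take care to verify that the limit $c$ lands in $\ol{\M}$, so that $(b,c)$ is an admissible element of the block $x\in\lr(\M^{\perp},\ol{\M})$; this is immediate since $\ran x\subseteq\ol{\M}$ and $\ol{\M}$ is closed.
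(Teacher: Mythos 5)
Your proof is correct, and the forward implication coincides with the paper's (Proposition \ref{L2} plus Proposition \ref{repSpMcerrado}, with $x=\PMN\cap(\M^{\perp}\times\HH)$ closed as an intersection of closed subspaces of $\HH\times\HH$). For the converse, however, you take a genuinely different route: you unfold the block matrix as $I_{\ol{\M}}\ \hat{+}\ x$ and run a direct sequential argument, using continuity of $P_{\ol{\M}}$ and $P_{\M^{\perp}}$ to split a convergent sequence in $E$ into convergent pieces of $I_{\ol{\M}}$ and of $x$. The paper instead invokes Lemma \ref{rcclosed}: first item $2$ shows the row $\begin{pmatrix} I & x\end{pmatrix}$ is closed (the hypotheses hold because $a=I_{\ol{\M}}$ gives $\dom a^*=\ol{\M}$ closed and $\dom x^*\subseteq\ol{\M}$), and then item $1$ shows the column formed by this row and the everywhere-defined zero row is closed. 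Note that this means your cautionary remark slightly mischaracterizes the alternative: the paper does not write the matrix as a componentwise sum of two closed columns (which, as you say, would be inconclusive), but as a column of rows, and the domain condition in item $2$ of Lemma \ref{rcclosed} is precisely the abstract counterpart of the orthogonal decoupling you exploit by hand. The trade-off is the usual one: your argument is elementary and self-contained, making the geometric mechanism (orthogonality of $\ol{\M}$ and $\M^{\perp}$) explicit, while the paper's is shorter and reuses the general operational calculus of \cite{HassiMR}, which scales to situations where the $(1,1)$-block is not the identity.
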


\begin{proof} Suppose that $\PMN$ is closed. Then, by Proposition \ref{L2}, $\M$ and $\N$ are closed. So that, by Proposition \ref{repSpMcerrado}, $P_{\M,\N}=\matriz{I}{x}{0}{0}_{\M}$ with $x=\PMN \cap (\M^{\perp} \times \HH)$ closed.
	The converse follows by items $2$ and $1$ of Lemma \ref{rcclosed}.
\end{proof}

Our next goal is to get a matrix representation with respect to $\ol{\M}$  of every  representable $P_{\M, \N}$. To this end, we need the following lemma.

\begin{lema}\label{lemasp} It holds that
	$$P_{\M,\N}=P_{\M,\ol{\M}\cap\N}P_{\ol{\M},\N}.$$
\end{lema}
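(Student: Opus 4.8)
The plan is to prove the identity $P_{\M,\N}=P_{\M,\ol{\M}\cap\N}\,P_{\ol{\M},\N}$ by working directly from the characterization in Proposition \ref{L1}, namely $P_{\M,\N}=I_{\ran}+({\ker}\times\{0\})$, together with Lemma \ref{lemalr}, which lets us reduce an equality of linear relations to a one-sided inclusion plus matching of domains and multivalued parts. First I would record the three ingredients: $P_{\ol{\M},\N}=I_{\ol{\M}}\,\hat{+}\,(\N\times\{0\})$ has range $\ol{\M}$, kernel $\N$, domain $\ol{\M}+\N$ and multivalued part $\ol{\M}\cap\N$; and $P_{\M,\ol{\M}\cap\N}=I_{\M}\,\hat{+}\,((\ol{\M}\cap\N)\times\{0\})$ has range $\M$, kernel $\ol{\M}\cap\N$, domain $\M+(\ol{\M}\cap\N)$ and multivalued part $\M\cap(\ol{\M}\cap\N)=\M\cap\N$.

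The natural approach is to compute the product $P_{\M,\ol{\M}\cap\N}\,P_{\ol{\M},\N}$ directly from the definition of the product of relations. A pair $(x,y)$ lies in the product iff there is $z$ with $(x,z)\in P_{\ol{\M},\N}$ and $(z,y)\in P_{\M,\ol{\M}\cap\N}$. Using Proposition \ref{L1} I would write $(x,z)\in P_{\ol{\M},\N}$ as $x=m'+n$ with $m'\in\ol{\M}$, $n\in\N$ and $z=m'$ (so $z\in\ol{\M}=\dom P_{\M,\ol{\M}\cap\N}$, which guarantees the intermediate step is non-vacuous), and then $(z,y)\in P_{\M,\ol{\M}\cap\N}$ as $z=m+w$ with $m\in\M$, $w\in\ol{\M}\cap\N$ and $y=m$. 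Assembling these shows that any such $(x,y)$ can be matched against an element of $P_{\M,\N}$, giving one inclusion; the cleanest direction to establish by hand is likely $P_{\M,\N}\subseteq P_{\M,\ol{\M}\cap\N}\,P_{\ol{\M},\N}$, obtained by starting from $(x,y)\in P_{\M,\N}$, writing $x=m+n$ with $m\in\M\subseteq\ol{\M}$, $y=m$, and exhibiting the intermediate vector $z:=m$ (noting $(x,z)\in P_{\ol{\M},\N}$ since $n\in\N$, and $(z,y)=(m,m)\in I_\M\subseteq P_{\M,\ol{\M}\cap\N}$).

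Having one inclusion, I would finish with Lemma \ref{lemalr} by checking that the domains and multivalued parts agree. For the domain, $\dom(P_{\M,\ol{\M}\cap\N}\,P_{\ol{\M},\N})$ consists of those $x$ whose image under $P_{\ol{\M},\N}$ meets $\dom P_{\M,\ol{\M}\cap\N}=\M+(\ol{\M}\cap\N)$; since $\ran P_{\ol{\M},\N}=\ol{\M}$ and $\M+(\ol{\M}\cap\N)\subseteq\ol{\M}$ absorbs the relevant images, a short computation should give $\dom=\M+\N=\dom P_{\M,\N}$. For the multivalued part I would use Lemma \ref{propbasicas}(2), which gives $\mul(RT)=R(\mul T)$: here $\mul(P_{\M,\ol{\M}\cap\N}\,P_{\ol{\M},\N})=P_{\M,\ol{\M}\cap\N}(\mul P_{\ol{\M},\N})=P_{\M,\ol{\M}\cap\N}(\ol{\M}\cap\N)$, and since $\ol{\M}\cap\N$ is exactly the kernel of $P_{\M,\ol{\M}\cap\N}$, its image is the multivalued part $\M\cap\N=\mul P_{\M,\N}$, as required.

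The main obstacle I anticipate is the domain verification rather than the inclusion itself: one must be careful that the intermediate vector $z$ produced by $P_{\ol{\M},\N}$ genuinely lands in $\dom P_{\M,\ol{\M}\cap\N}$ for every $x\in\M+\N$, and conversely that no spurious $x$ outside $\M+\N$ sneaks into the product's domain. The subtlety is that $P_{\ol{\M},\N}$ sends $x=m'+n$ to the coset $m'+\mul P_{\ol{\M},\N}=m'+(\ol{\M}\cap\N)$, so I need that this coset meets $\M+(\ol{\M}\cap\N)$ precisely when $x\in\M+\N$; tracking the interaction between $\M$, its closure $\ol{\M}$, and the subspace $\N$ here is where the argument requires genuine attention, and it is exactly the point where the refinement $\ol{\M}\cap\N$ (as opposed to $\N$) in the middle factor is essential.
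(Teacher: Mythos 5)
Your proposal is correct, and it differs from the paper's proof only in how the second half is closed. The inclusion $P_{\M,\N}\subseteq P_{\M,\ol{\M}\cap\N}P_{\ol{\M},\N}$ you establish (intermediate vector $z=m$, using $I_\M\subseteq P_{\M,\ol{\M}\cap\N}$) is exactly the paper's first step. For the rest, the paper proves the reverse inclusion directly: if $(u,v)$ is in the product, with $(u,w)=(m+n,m)$, $m\in\ol{\M}$, $n\in\N$, and $(w,v)=(m'+n',m')$, $m'\in\M$, $n'\in\ol{\M}\cap\N$, then $u=m+n=w+n=m'+(n'+n)\in\M+\N$ and $(u,v)\in P_{\M,\N}$; this is precisely the ``assembling'' you sketch and then set aside in favor of Lemma \ref{lemalr}. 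Your route instead needs the two verifications $\dom E\subseteq \M+\N$ and $\mul E\subseteq\M\cap\N$, and both of your arguments for them are sound: the domain check is in substance the same recombination as the paper's reverse inclusion (the image coset $m'+(\ol{\M}\cap\N)$ meets $\dom P_{\M,\ol{\M}\cap\N}=\M+(\ol{\M}\cap\N)$ if and only if $m'\in\M+(\ol{\M}\cap\N)$, if and only if $u\in\M+\N$), and the multivalued-part check via Lemma \ref{propbasicas} is immediate, since the image of the kernel $\ol{\M}\cap\N$ under $P_{\M,\ol{\M}\cap\N}$ is its multivalued part $\M\cap(\ol{\M}\cap\N)=\M\cap\N=\mul P_{\M,\N}$. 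The two approaches therefore cost about the same: the paper's double inclusion is marginally leaner, because the recombination you spend on the domain check already delivers $(u,v)\in P_{\M,\N}$ and so makes any comparison of multivalued parts unnecessary, while your Arens-style finish is more mechanical and reuses machinery the paper deploys in neighboring proofs (e.g., Proposition \ref{repSpMcerrado} and Theorem \ref{repSp}).
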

\begin{proof} Set $E:=P_{\M,\ol{\M}\cap\N}P_{\ol{\M},\N}.$ Since $I_\M \subseteq P_{\M,\ol{\M}\cap\N},$ it follows that  $P_{\M,\N}\subseteq E.$ Conversely, if $(u,v) \in E$ then there exists $ w \in \HH$ such that $(u,w)=(m+n,m)$ with $m\in \ol{\M}$ and $n \in \N,$ and $(w,v)=(m'+n',m')$ with $m' \in \M$ and $n' \in \ol{\M} \cap \N.$ Hence $u=m+n=w+n=m'+(n'+n)\in \M+\N$ and $(u,v)=(m'+(n'+n),m') \in \PMN.$ Then $E \subseteq \PMN.$
\end{proof}

Using Proposition \ref{repSpMcerrado} and Lemma \ref{lemasp} we get the following matrix representation onto $\ol{\M}$ for any representable multivalued projection $\PMN.$

\begin{thm}\label{repSp} If $\PMN$ admits a matrix representation with respect to $\ol{\M}$ then
	\begin{equation}\label{PMNmatrix}
		P_{\M,\N}=\matriz{P_{\M, \ol{\M}\cap\N}}{x}{0}{0}_{\ol{\M}}
	\end{equation}
	where $x$ is as in \eqref{eqx0}.
\end{thm}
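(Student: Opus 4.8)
The plan is to read the four blocks of \eqref{PMNmatrix} directly off the canonical representation supplied by Theorem \ref{theoMLR}, rather than through the product in Lemma \ref{lemasp}. Since by hypothesis $\PMN$ admits a matrix representation with respect to $\ol{\M}$, item $iii)$ of that theorem yields $\PMN=\matriz{a}{b}{c}{d}_{\ol{\M}}$ with $a=P_{\ol{\M}}\PMN|_{\ol{\M}}$, $b=P_{\ol{\M}}\PMN|_{\M^{\perp}}$, $c=P_{\M^{\perp}}\PMN|_{\ol{\M}}$ and $d=P_{\M^{\perp}}\PMN|_{\M^{\perp}}$. It then remains to match these blocks with the entries of \eqref{PMNmatrix}, and the key structural fact driving every identification is that $\ran\PMN=\M\subseteq\ol{\M}$.

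The $(1,2)$-block needs no work: $b=P_{\ol{\M}}\PMN|_{\M^{\perp}}$ is by definition the relation $x$ of \eqref{eqx0}. For the lower row, since $P_{\M^{\perp}}$ annihilates $\M$ and $\ran\PMN=\M$, both $c$ and $d$ have range $\{0\}$, so they are the zero blocks. For the $(1,1)$-block, since $P_{\ol{\M}}$ fixes $\M$ pointwise it acts as the identity on $\ran(\PMN|_{\ol{\M}})\subseteq\M$, whence $a=\PMN|_{\ol{\M}}=\PMN\cap(\ol{\M}\times\HH)$.

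It then remains to compute this intersection. Writing $\PMN=I_{\M}\ \hat{+}\ (\N\times\{0\})$ as in Proposition \ref{L1}, a generic element has the form $(m+n,m)$ with $m\in\M$ and $n\in\N$; as $m\in\ol{\M}$, the membership $m+n\in\ol{\M}$ is equivalent to $n\in\ol{\M}\cap\N$. Hence $\PMN\cap(\ol{\M}\times\HH)=I_{\M}\ \hat{+}\ ((\ol{\M}\cap\N)\times\{0\})=P_{\M,\ol{\M}\cap\N}$, so $a=P_{\M,\ol{\M}\cap\N}$, which is exactly the $(1,1)$-entry of \eqref{PMNmatrix}.

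The step I expect to require the most care is the final bookkeeping that lets one write the literal zeros of \eqref{PMNmatrix} in place of $c$ and $d$. These canonical blocks have range $\{0\}$ but nonzero domains $\ol{\M}\cap\dom\PMN$ and $\M^{\perp}\cap\dom\PMN$, which coincide with $\dom a$ and $\dom b$ respectively; so, under the convention of the Remark on block-matrix equality, replacing them by zero relations leaves the domain constraints $x_1\in\dom a$, $x_2\in\dom b$ defining the generated relation untouched, and likewise its multivalued part, so the generated relation is unchanged. This is what makes \eqref{PMNmatrix} legitimate. The route hinted at by Lemma \ref{lemasp} and Proposition \ref{repSpMcerrado} is equally available, but there the main obstacle shifts to Lemma \ref{producto}, which delivers only an inclusion for the product of two block matrices; equality would then have to be recovered through Lemma \ref{lemalr}, together with an explicit computation identifying the resulting off-diagonal product entry with $x$.
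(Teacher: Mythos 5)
Your proof is correct, and it takes a genuinely different route from the paper's. The paper factors $P_{\M,\N}=P_{\M,\ol{\M}\cap\N}P_{\ol{\M},\N}$ (Lemma \ref{lemasp}), represents the second factor via Proposition \ref{repSpMcerrado}, multiplies the two block matrices using Lemma \ref{producto} --- which only yields an inclusion of the generated relation $E$ inside $P_{\M,\N}$ --- and then upgrades to equality by matching domains and multivalued parts through Lemma \ref{lemalr}. You instead read all four canonical blocks of Theorem \ref{theoMLR} $iii)$ directly: $b$ is $x$ by the very definition \eqref{eqx0}; the bottom row vanishes because $\ran P_{\M,\N}=\M$ is annihilated by $P_{\M^{\perp}}$; and the $(1,1)$ block is $P_{\M,\N}\cap(\ol{\M}\times\HH)=P_{\M,\ol{\M}\cap\N}$, by the elementary computation with Proposition \ref{L1} (your handling of the non-uniqueness of the decomposition $(m+n,m)$ is fine, since $m\in\ol{\M}$ forces $n\in\ol{\M}$ in \emph{every} representation of an element whose first component lies in $\ol{\M}$). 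Your final bookkeeping for the zero blocks is also sound and necessary: the canonical blocks $c,d$ are zero relations restricted to $\dom a$ and $\dom b$, and since the generated relation only constrains $x_1$ to $\dom a\cap\dom c$ and $x_2$ to $\dom b\cap\dom d$, enlarging their domains to $\ol{\M}$ and $\M^{\perp}$ changes neither the domain constraints nor the multivalued part. On balance, your argument is more self-contained, avoiding the product calculus of Lemmas \ref{producto} and \ref{filacolumna} altogether; the paper's factorization route costs more machinery but has the side benefit of exhibiting $x=P_{\M,\ol{\M}\cap\N}\,x_0$, where $x_0$ is the $(1,2)$-coefficient of $P_{\ol{\M},\N}$, which is exactly the parameterization recorded in the Remark following the theorem and is not produced by your computation.
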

\begin{proof} 
	From  Lemma \ref{lemasp}, $P_{\M,\N}=P_{\M,\ol{\M}\cap\N}P_{\ol{\M},\N}.$
	It can be easily seen that $$P_{\M, \ol{\M}\cap\N}=\matriz{P_{\M, \ol{\M}\cap\N}}{0}{0}{0}_{\ol{\M}}.$$ By Proposition \ref{repSpMcerrado}, $$P_{\ol{\M}, \N}=\matriz{I}{x_0}{0}{0}_{\ol{\M}},$$ where $x_0:=P_{\ol{\M},\N}\cap (\M^{\perp} \times \HH).$ 
	
	By Lemma \ref{lemasp} and \eqref{Productres}, 
	$$x=P_{\M,\ol{\M}\cap\N}(P_{\ol{\M},\N}\cap (\M^{\perp} \times \HH))=P_{\M,\ol{\M}\cap\N}x_0.$$
	By Lemma \ref{producto},
	\begin{align*}
		P_{\M,\N}&=\begin{pmatrix}
			P_{\M, \ol{\M}\cap\N}\\0
		\end{pmatrix} \begin{pmatrix}
			I & x_0
		\end{pmatrix}\supseteq {\begin{pmatrix}
				\begin{pmatrix}
					P_{\M, \ol{\M}\cap\N}\\0
				\end{pmatrix} I &  \begin{pmatrix}
					P_{\M, \ol{\M}\cap\N}\\0
				\end{pmatrix} x_0
		\end{pmatrix}}_{\ol{\M}}
		\\
		&=\matriz{P_{\M, \ol{\M}\cap\N}}{x}{0}{0}_{\ol{\M}}:=E,
	\end{align*}
	where the last equality follows using Lemma \ref{filacolumna}.
	
	Since $\dom x=P_{\M^{\perp}}(\N)$ and $\mul x=\M \cap \N,$ it comes that $\dom E=(\M+\ol{\M}\cap\N) \oplus \dom x= \M+\N$
	and $\mul E=\ol{\M}\cap\N \cap \M + \mul x=\M \cap \N.$ Then \eqref{PMNmatrix} follows from Lemma \ref{lemalr}. 
\end{proof} 

\begin{obs} From the proof of Theorem \ref{repSp} it follows  that if $x=\PMN \cap (\M^{\perp} \times \HH)$ then
	$$x=P_{\M,\ol{\M}\cap\N}x_0$$ where $x_0=P_{\ol{\M},\N} \cap (\M^{\perp} \times \HH).$ In other words, if $\M_0$ is a closed subspace of $\HH$ then for any $\M$ such that $\ol{\M}=\M_0,$ the $(1,2)$-coefficient of the matrix representation of $\PMN$ given in Theorem \ref{repSp} is parameterized in terms of the fixed $(1,2)$-coefficient $x_0$ of the matrix representation of $P_{\M_0,\N}.$
\end{obs}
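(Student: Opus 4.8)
The plan is to recognize that the displayed identity is nothing more than the computation already carried out inside the proof of Theorem \ref{repSp}, now read as a statement about the $(1,2)$-coefficient alone, and then to unpack what it says when the closure of $\M$ is held fixed. First I would observe that, by definition \eqref{eqx0}, the coefficient $x=\PMN\cap(\M^{\perp}\times\HH)$ is exactly the restriction $\PMN|_{\M^{\perp}}$, since $T|_{\M^{\perp}}=T\cap(\M^{\perp}\times\HH)$. I would then invoke Lemma \ref{lemasp} to factor $\PMN=P_{\M,\ol{\M}\cap\N}P_{\ol{\M},\N}$ and apply \eqref{Productres} to push the restriction through the product:
$$x=(P_{\M,\ol{\M}\cap\N}P_{\ol{\M},\N})|_{\M^{\perp}}=P_{\M,\ol{\M}\cap\N}\,(P_{\ol{\M},\N}|_{\M^{\perp}})=P_{\M,\ol{\M}\cap\N}\,x_0,$$
where $x_0=P_{\ol{\M},\N}\cap(\M^{\perp}\times\HH)$. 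This reproduces verbatim the line already established in the proof of Theorem \ref{repSp}, so the first assertion requires no additional work.

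Next, to justify the parameterization reading, I would note that $\M^{\perp}=(\ol{\M})^{\perp}$, so that $x_0=P_{\ol{\M},\N}\cap((\ol{\M})^{\perp}\times\HH)$ is precisely the $(1,2)$-coefficient, in the sense of \eqref{eqx0}, of the representation of $P_{\ol{\M},\N}$ with respect to the \emph{closed} subspace $\ol{\M}$. By Proposition \ref{repSpMcerrado} it equals $-P_{\ol{\M}}(P_{\M^{\perp}}|_{\N})^{-1}$, a linear relation depending only on $\ol{\M}$ and $\N$. Writing $\M_0:=\ol{\M}$, this shows that $x_0$ is the same for every subspace $\M$ sharing the closure $\M_0$; only the left factor $P_{\M,\M_0\cap\N}$ depends on the particular choice of $\M$. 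Hence the $(1,2)$-coefficient $x=P_{\M,\M_0\cap\N}\,x_0$ of any representable $\PMN$ is obtained from the single fixed coefficient $x_0$ of $P_{\M_0,\N}$ by left multiplication.

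I do not expect a genuine obstacle, as the statement is a direct corollary of the proof of Theorem \ref{repSp}. The only points demanding care are the bookkeeping identity $\PMN|_{\M^{\perp}}=\PMN\cap(\M^{\perp}\times\HH)$ combined with the product-restriction rule \eqref{Productres}, and the elementary observation $(\ol{\M})^{\perp}=\M^{\perp}$, which is exactly what makes $x_0$ independent of the particular $\M$ among all subspaces with closure $\M_0$.
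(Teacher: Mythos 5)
Your proposal is correct and follows essentially the same route as the paper: the identity $x=P_{\M,\ol{\M}\cap\N}\,x_0$ is exactly the line in the proof of Theorem \ref{repSp} obtained by combining Lemma \ref{lemasp} with the restriction rule \eqref{Productres}, which is precisely what the remark refers to. Your additional observation that $(\ol{\M})^{\perp}=\M^{\perp}$, so that $x_0=-P_{\ol{\M}}(P_{\M^{\perp}}|_{\N})^{-1}$ depends only on $\M_0=\ol{\M}$ and $\N$, correctly justifies the parameterization reading and matches the paper's intent.
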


As a corollary we get a version for multivalued projections of Ando's matrix representation of a projection.

\begin{thm} [\normalfont{cf. \cite[Theorem 2.6]{Ando}}]\label{Ando}
	If $\PMN$ admits a matrix representation with respect to $\ol{\M}$ then
	\begin{equation}
		P_{\M,\N}=\matriz{P_{\M, \ol{\M}\cap\N}}{-P_{\M}((I-P_\M)|_\N)^{-1}}{0}{0}_{\ol{\M}}.\nonumber
	\end{equation}
\end{thm}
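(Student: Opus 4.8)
The plan is to obtain this as an immediate combination of the two preceding results, since all the genuine work has already been done. Both the shape of the matrix and the precise form of its $(1,2)$-entry are available: Theorem \ref{repSp} supplies the block structure, and Lemma \ref{lemarepx0} supplies the closed formula for the corner.

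First I would invoke Theorem \ref{repSp}. Its hypothesis — that $\PMN$ admits a matrix representation with respect to $\ol{\M}$ — is exactly the hypothesis of the present statement, so it applies verbatim and yields
\begin{equation*}
	P_{\M,\N}=\matriz{P_{\M, \ol{\M}\cap\N}}{x}{0}{0}_{\ol{\M}},
\end{equation*}
where $x=\PMN \cap (\M^{\perp} \times \HH)$ is the $(1,2)$-coefficient described in \eqref{eqx0}. At this stage the diagonal and lower entries already match the claimed formula, so everything reduces to identifying $x$.

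Next I would appeal to Lemma \ref{lemarepx0}, whose hypothesis is again precisely that $\PMN$ be representable with respect to $\ol{\M}$. That lemma gives the second equality $x=-P_\M ((I-P_\M)|_{\N})^{-1}$, so substituting this expression for the $(1,2)$-coefficient into the matrix above produces exactly
\begin{equation*}
	P_{\M,\N}=\matriz{P_{\M, \ol{\M}\cap\N}}{-P_{\M}((I-P_\M)|_\N)^{-1}}{0}{0}_{\ol{\M}},
\end{equation*}
which is the assertion.

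I do not expect any real obstacle here: the result is a corollary in the strict sense, and the only thing to verify is that the two cited results share the same hypothesis so that they may be chained without additional assumptions. The substantive steps — the derivation of the block form in Theorem \ref{repSp} (via Lemma \ref{lemasp}, Proposition \ref{repSpMcerrado}, and the product identities of Lemma \ref{producto} and Lemma \ref{filacolumna}) and the computation of $\dom x$, $\mul x$, and the explicit inverse formula in Lemma \ref{lemarepx0} — are entirely absorbed into those earlier statements, leaving nothing beyond the substitution to carry out.
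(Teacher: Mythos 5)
Your proposal is correct and coincides exactly with the paper's own proof, which likewise derives the result by combining Theorem \ref{repSp} (for the block structure) with Lemma \ref{lemarepx0} (for the identification of the $(1,2)$-coefficient). Nothing further is needed.
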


\begin{proof} The result follows from Theorem \ref{repSp} and Lemma \ref{lemarepx0}.
\end{proof}

\begin{cor} \label{semiMR} The multivalued projection $\PMN$ admits the matrix representation 
	\begin{equation} \label{MatrixP}
		P_{\M,\N}=\matriz{I_\M}{x}{0}{0}_{\ol{\M}}
	\end{equation}
	where  $x$ is as in \eqref{eqx0} if and only if $\N \subseteq \M \oplus \M^{\perp}.$
\end{cor}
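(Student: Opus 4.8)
The plan is to verify directly that the relation $E$ generated by the block matrix on the right-hand side of \eqref{MatrixP} is always contained in $\PMN$, and then to apply the uniqueness criterion of Lemma \ref{lemalr} to decide when that inclusion is an equality. Writing $x=\PMN\cap(\M^\perp\times\HH)$ as in \eqref{eqx0}, a typical element of $E=\matriz{I_\M}{x}{0}{0}_{\ol\M}$ has the form $(x_1+x_2,\,x_1+z_1)$ with $x_1\in\M$ and $(x_2,z_1)\in x$, since the two lower blocks vanish. First I would note that $(x_1,x_1)\in I_\M\subseteq\PMN$ and $(x_2,z_1)\in\PMN$, so linearity of $\PMN$ as a subspace gives $(x_1+x_2,x_1+z_1)\in\PMN$; hence $E\subseteq\PMN$ with no extra hypothesis.

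Next I would record the domain and the multivalued part of $E$. Since $(0,w)\in x$ forces $w\in\mul\PMN=\M\cap\N$, we have $\mul x=\M\cap\N$, and with the remaining blocks $I_\M$ and $0$ this gives $\mul E=\M\cap\N=\mul\PMN$. For the domain, $\dom(I_\M)=\M$, while $\dom x=\M^\perp\cap\dom\PMN=\M^\perp\cap(\M+\N)$, so $\dom E=\M\oplus\dom x$. Because $\mul E=\mul\PMN$ and $E\subseteq\PMN$, Lemma \ref{lemalr} shows that $E=\PMN$ holds if and only if $\dom\PMN=\M+\N\subseteq\dom E=\M\oplus\dom x$.

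It then remains to prove that $\M+\N\subseteq\M\oplus\dom x$ is equivalent to $\N\subseteq\M\oplus\M^\perp$, which I regard as the main point. For the forward implication one uses only $\dom x\subseteq\M^\perp$: then $\M\oplus\dom x\subseteq\M\oplus\M^\perp$ and $\N\subseteq\M+\N$ give $\N\subseteq\M\oplus\M^\perp$. For the converse, assuming $\N\subseteq\M\oplus\M^\perp$ forces $\M+\N\subseteq\M\oplus\M^\perp$; given $v\in\M+\N$ I would split $v=m+m'$ orthogonally with $m\in\M$ and $m'=P_{\M^\perp}v\in\M^\perp$, observe that $m'=v-m\in\M+\N$, hence $m'\in\M^\perp\cap(\M+\N)=\dom x$, and conclude $v\in\M\oplus\dom x$. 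The only delicate step is this last equivalence: one must use the identification $\dom x=\M^\perp\cap(\M+\N)$ to guarantee that the orthogonal projection $P_{\M^\perp}v$ of an element of $\M+\N$ again lies in $\M+\N$, and therefore in $\dom x$.
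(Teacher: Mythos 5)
Your argument is correct, but it reaches the conclusion by a genuinely different route than the paper. The forward implication is the same in both (the representation forces $\N\subseteq\dom\PMN\subseteq\M\oplus\M^{\perp}$), but for the converse the paper first notes that $\N\subseteq\M\oplus\M^{\perp}$ gives $P_{\ol{\M}}(\M+\N)\subseteq\M$, so that $\PMN$ is representable and Theorem \ref{repSp} applies, and then simplifies the resulting $(1,1)$-entry $P_{\M,\ol{\M}\cap\N}$ to $I_\M$ using $\ol{\M}\cap\N=\M\cap\N$ together with the fact that $\mul x=\M\cap\N$ absorbs that entry's multivalued part. You bypass Theorem \ref{repSp} (and with it Lemma \ref{lemasp} and the block-product Lemmas \ref{producto} and \ref{filacolumna}) entirely: you check that the relation $E$ generated by the right-hand side of \eqref{MatrixP} satisfies $E\subseteq\PMN$ unconditionally, because $I_\M\subseteq\PMN$ and $x=\PMN\cap(\M^{\perp}\times\HH)\subseteq\PMN$ by \eqref{eqx0}; you compute $\mul E=\M\cap\N=\mul\PMN$ and $\dom E=\M\oplus\dom x$ with $\dom x=\M^{\perp}\cap(\M+\N)$; and you let Lemma \ref{lemalr} reduce the equality $E=\PMN$ to the single inclusion $\M+\N\subseteq\M\oplus\dom x$, which you correctly prove is equivalent to $\N\subseteq\M\oplus\M^{\perp}$. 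This is in effect the method of the paper's proof of Proposition \ref{repSpMcerrado}, carried out for non-closed $\M$: it is more elementary and self-contained, and it pinpoints exactly where the hypothesis enters (the domain inclusion), whereas the paper's derivation is shorter once Theorem \ref{repSp} is available and exhibits the corollary as a specialization of the general representation. One small remark: when you assert $\mul x=\M\cap\N$ you justify only the inclusion $\mul x\subseteq\M\cap\N$; the reverse inclusion, which is the one Lemma \ref{lemalr} actually requires, is equally immediate, since $w\in\M\cap\N=\mul\PMN$ gives $(0,w)\in\PMN\cap(\M^{\perp}\times\HH)=x$.
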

\begin{proof} If \eqref{MatrixP}  holds then $\N\subseteq \dom P_{\M,\N}\subseteq \M \oplus \M^{\perp}.$ Conversely,  if  $\N \subseteq \M \oplus \M^{\perp}$ then $P_{\ol{\M}}(\M+\N) \subseteq \M.$ So that, by Theorem \ref{repSp} and the fact that $\ol{\M} \cap \N=\M \cap \N,$ \eqref{MatrixP} follows,
	because $P_{\M, \M \cap \N}=I_\M \ \hat{+} \ ((\M\cap \N) \times \{0\})=I_\M \ \hat{+} \ (\{0\} \times (\M\cap \N))$ and $\mul x= \M \cap \N.$ Then $\matriz{P_{\M, \M \cap \N}}{x}{0}{0}_{\ol{\M}}=\matriz{I_\M}{x}{0}{0}_{\ol{\M}}.$
\end{proof}

\begin{nota}
	Any $x \in \lr(\M^{\perp},\ol{\M})$ can be seen as the subspace of $\HH$ whose elements are $u+v$ under the correspondence $(u,v)  \leftrightarrow u+v.$ 
\end{nota}

\begin{prop} \label{prop27}
	Let $\W$ be a subspace of $\ol{\M}$ and $x \in \lr(\M^{\perp},\ol{\M})$  with $\ran x\subseteq \M.$ If $$E=\matriz{P_{\M, \W}}{x}{0}{0}_{\ol{\M}}$$ then $E=P_{\M,-x+\W}.$
\end{prop}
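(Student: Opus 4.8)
The plan is to compute $E$ directly from the definition of the linear relation generated by a block matrix and then match it with $P_{\M,-x+\W}$ by a double inclusion. The key simplification is that the $(2,1)$- and $(2,2)$-entries are the zero operators on $\ol{\M}$ and $\M^{\perp}$; since their domains are all of $\ol{\M}$ and $\M^{\perp}$, they force the corresponding second-coordinate contributions to vanish and impose no restriction beyond $x_1\in\dom P_{\M,\W}\subseteq\ol{\M}$ and $x_2\in\dom x\subseteq\M^{\perp}$. Unravelling the definition thus yields
$$E=\{(x_1+x_2,\,w_1+z_1): (x_1,w_1)\in P_{\M,\W},\ (x_2,z_1)\in x\}.$$

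Next I would parametrize the two factors: a generic element of $P_{\M,\W}=I_\M \ \hat{+}\ (\W\times\{0\})$ is $(m+w,m)$ with $m\in\M$ and $w\in\W$, while a generic element of $x$ is a pair $(u,v)$ with $u\in\M^{\perp}$ and, by hypothesis, $v\in\ran x\subseteq\M$. Hence every element of $E$ has the form $(m+w+u,\,m+v)$. For $E\subseteq P_{\M,-x+\W}$ I would put $m':=m+v$, which lies in $\M$ precisely because $\ran x\subseteq\M$; then the first coordinate becomes $m'+(u-v+w)$ and the second is $m'$, where $u-v+w$ belongs to $-x+\W$ under the identification $(u,v)\leftrightarrow u+v$ (so that $-x$ is identified with $\{u-v:(u,v)\in x\}$). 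Thus $(m+w+u,\,m+v)\in I_\M \ \hat{+}\ ((-x+\W)\times\{0\})=P_{\M,-x+\W}$. The reverse inclusion is the same computation read backwards: given $(m'+n,m')$ with $n=u-v+w$, $(u,v)\in x$, $w\in\W$, set $m:=m'-v\in\M$ to recover the element $(m+w+u,\,m+v)=(m'+n,m')$ of $E$.

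The argument is essentially bookkeeping, and I expect no genuine obstacle. The only point requiring attention is the repeated use of the hypothesis $\ran x\subseteq\M$, which is exactly what allows each range contribution $v$ to be absorbed into the $\M$-component and makes the first coordinate split cleanly along the sum $\M+(-x+\W)$; the hypotheses $\W\subseteq\ol{\M}$ and $\ran x\subseteq\M\subseteq\ol{\M}$ guarantee moreover that the blocks are well defined. Since both relations are exhibited as the very same set of pairs, the equality $E=P_{\M,-x+\W}$ follows at once, with no need to separately verify domain or multivalued-part containments via Lemma \ref{lemalr}.
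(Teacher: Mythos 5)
Your proof is correct, and it diverges from the paper's in a worthwhile way. The first half coincides with the paper's argument: after observing that the zero blocks have full domains (so they only force their second-coordinate contributions to vanish), you write a generic element of $E$ as $(m+w+u,\,m+v)$ with $m\in\M$, $w\in\W$, $(u,v)\in x$, and regroup it as $((m+v)+(u-v+w),\,m+v)$ to get $E\subseteq P_{\M,\N}$ with $\N=-x+\W$; this is exactly what the paper does, using $\ran x\subseteq\M$ to put $m+v\in\M$. The difference is in how equality is concluded. The paper never proves the reverse inclusion directly: it checks $\M+\N\subseteq\dom E$ and $\M\cap\N\subseteq\mul E$ (the latter via a small separate argument, namely that $z=w+u-v\in\M\cap\N$ forces $u\in\dom x\cap\ol{\M}\subseteq\M^{\perp}\cap\ol{\M}=\{0\}$, hence $v\in\mul x$ and $w\in\W\cap\M$) and then invokes Arens' criterion, Lemma \ref{lemalr}. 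Your substitution $m:=m'-v$, legitimate precisely because $\ran x\subseteq\M$, exhibits any $(m'+n,m')\in P_{\M,\N}$ as an element of $E$ at once, so you obtain the set equality by a genuine double inclusion, with no appeal to Lemma \ref{lemalr} and no multivalued-part computation. This is a modest but real simplification: the inclusion-plus-domain/mul pattern is the standard device in Section \ref{three}, where reverse inclusions are typically less accessible, but here the hypothesis $\ran x\subseteq\M$ makes the direct verification immediate, and your argument exploits that.
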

\begin{proof}
	Since $(y,z) \in E$ if and only if $(y,z)=(m+w+u,m+v),$ where $m \in \M, w\in\W$ and $(u,v) \in x,$ then
	$$(y,z)=((m+v)+(w+u-v),m+v),$$ with $m+v \in \M.$ Thus, $(y,z)\in P_{\M,\N}$ where $\N=\{u-v: (u,v)\in x\}+\W=-x+\W,$ i.e., $E\subseteq P_{\M,\N}.$ 
	Since $\ran x \subseteq \M,$ then $\M+\N\subseteq \M+\W+\dom x=\dom E.$ Let us see that $\M\cap \N\ \subseteq \mul E.$ In fact, if $z \in \M \cap \N \subseteq \dom E$ then $z=w+u-v$ for some $w \in \W$ and $(u,v) \in x.$ Then $u=z-w+v \in \dom x \cap \ol{\M} \subseteq \M^{\perp} \cap \ol{\M}.$ Then $u=0,$ $z=w-v,$ $v \in \mul x \subseteq \M$ and $w \in \W \cap \M.$ Therefore $z \in  \W \cap \M+ \mul x=\mul E.$
	
	Hence, by Lemma \ref{lemalr}, $E=P_{\M,\N}.$
\end{proof}

\begin{cor} The class of multivalued projections with range $\M$ admitting matrix representation with respect to $\ol{\M}$ is given by 
	$$\left\{\matriz{P_{\M, \W}}{x}{0}{0}_{\ol{\M}}: \W \mbox{ is a subspace of } \ol{\M} \mbox{ and } x \in \lr(\M^{\perp},\ol{\M}), \ran x \subseteq \M \right\}.$$
\end{cor}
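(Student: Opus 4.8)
The plan is to prove the asserted set equality by establishing the two inclusions separately, reading off one direction from Proposition \ref{prop27} and the other from Theorem \ref{repSp} together with Lemma \ref{lemarepx0}. Note first that a multivalued projection with range $\M$ is precisely a $\PMN$ for some subspace $\N$ (its kernel), so the claim concerns which $\N$ produce a $\PMN$ that is representable with respect to $\ol{\M}$.

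For the inclusion asserting that every matrix in the displayed family belongs to the class, I would fix a subspace $\W$ of $\ol{\M}$ and an $x \in \lr(\M^{\perp},\ol{\M})$ with $\ran x \subseteq \M$, and set $E=\matriz{P_{\M, \W}}{x}{0}{0}_{\ol{\M}}$. Proposition \ref{prop27} applies verbatim and identifies $E=P_{\M,-x+\W}$, so $E$ is a multivalued projection, and by the definition of $P_{\M,\N}$ its range is $\M$. Since $E$ is by construction written as a $2\times 2$ block matrix with respect to $\ol{\M}$, it admits a matrix representation there, so it lies in the class.

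For the reverse inclusion, I would take an arbitrary $\PMN$ with range $\M$ admitting a matrix representation with respect to $\ol{\M}$. Theorem \ref{repSp} gives $P_{\M,\N}=\matriz{P_{\M, \ol{\M}\cap\N}}{x}{0}{0}_{\ol{\M}}$ with $x$ as in \eqref{eqx0}. Taking $\W:=\ol{\M}\cap\N$, which is a subspace of $\ol{\M}$, and invoking Lemma \ref{lemarepx0} to write $x=\{(P_{\M^{\perp}}n,-P_\M n) : n \in \N\}$, the conditions $x \in \lr(\M^{\perp},\ol{\M})$ and $\ran x \subseteq \M$ follow immediately since the second coordinates lie in $\M$. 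Hence $\PMN$ has exactly the displayed form and belongs to the family.

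Because both inclusions are direct consequences of the preceding results, I do not expect any serious obstacle. The only points requiring a moment of care are recognizing $\ol{\M}\cap\N$ as the relevant subspace $\W$ of $\ol{\M}$ and verifying the structural constraints on the off-diagonal block, namely $x \in \lr(\M^{\perp},\ol{\M})$ with $\ran x \subseteq \M$; both are supplied by the explicit description of $x$ in Lemma \ref{lemarepx0}.
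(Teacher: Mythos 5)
Your proof is correct and takes essentially the same route the paper intends: the corollary is stated without proof precisely because it is the immediate combination of Proposition \ref{prop27} (showing every member of the displayed family is a multivalued projection with range $\M$, representable with respect to $\ol{\M}$ by construction) and Theorem \ref{repSp} together with Lemma \ref{lemarepx0} (showing every representable $P_{\M,\N}$ has the displayed form with $\W=\ol{\M}\cap\N$ and $\ran x\subseteq\M$), which is exactly your two-inclusion argument.
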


\subsection{Matrix representation of super-idempotents} \label{third1}

A linear relation $E \subseteq \HH \times \HH$ is called  \emph{sub-idempotent} if $E^2 \subseteq E,$  \emph{super-idempotent} if $E \subseteq E^2$ and an  \emph{idempotent} if the equality holds.
These classes of relations were studied in detail in \cite{idem}. Here we focus on the matrix representation of super-idempotents  and we get as a corollary a result concerning the representation of idempotents. We omit the study of the matrix representation of sub-idempotents since it follows similar ideas.

The next results gather some properties and descriptions of the class of super-idempotent linear relations, see \cite{idem}.
\begin{lema} \label{subsuper} The set of super-idempotent relations is given by
	$$\{P_{\M,\N} \ \hat{+} \ (\{0\} \times \St) : \M, \N, \St \mbox{ are subspaces of } \HH\}.$$
\end{lema}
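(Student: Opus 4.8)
The plan is to prove the set equality by double inclusion, using Proposition~\ref{L1} as the workhorse for the structure of idempotent-type relations. First I would show that every relation of the form $P_{\M,\N} \ \hat{+} \ (\{0\} \times \St)$ is super-idempotent. Writing $E := P_{\M,\N} \ \hat{+} \ (\{0\} \times \St)$, I note that adjoining $\{0\} \times \St$ to the multivalued part only enlarges $\mul E$ and $\ran E$. Since $P_{\M,\N}$ satisfies $P_{\M,\N}^2 = P_{\M,\N}$ and $\ran P_{\M,\N} \subseteq \dom P_{\M,\N}$, I would compute $E^2$ directly from the definition of the product of relations, tracking how the extra summand $\{0\} \times \St$ propagates. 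The key observation is that $(0,s) \in E$ for $s \in \St$ feeds into $E^2$ through any pair $(x,0)$ with $(x,0) \in E$ (which exist because $\ker P_{\M,\N} = \N$ contributes such pairs); more importantly, since $E \supseteq I_{\ran E}$-type elements are needed, I would verify $E \subseteq E^2$ by showing each generating pair of $E$ lies in $E^2$. The cleanest route is to check that $\ran E \subseteq \dom E$ fails in general but that the weaker super-idempotent inclusion $E \subseteq E^2$ holds because one can always factor a pair $(x,y) \in E$ through an intermediate element in $\ran E \cap \dom E$.

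For the reverse inclusion, I would take an arbitrary super-idempotent $E$ (so $E \subseteq E^2$) and reconstruct the three subspaces $\M, \N, \St$. The natural candidates are $\M := \ran E$, $\N := \ker E$, and $\St$ the ``excess'' multivalued part not already captured by $P_{\ran E, \ker E}$; concretely $\St$ should be chosen so that $\mul E = (\ran E \cap \ker E) + \St$, for instance $\St := \mul E$ itself or a complement of $\ran E \cap \ker E$ within $\mul E$. The main structural fact I would invoke is the characterization from \cite{idem} underlying Proposition~\ref{L1}: an idempotent is determined by range and kernel via $I_{\ran E} \ \hat{+} \ (\ker E \times \{0\})$, and a super-idempotent is an idempotent with possibly extra multivalued directions adjoined. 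I would verify via Lemma~\ref{lemalr} that $E$ and $P_{\ran E, \ker E} \ \hat{+} \ (\{0\} \times \St)$ coincide by checking containment in one direction together with agreement of domains and multivalued parts.

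The main obstacle I anticipate is disentangling the multivalued part correctly. Because $\mul P_{\M,\N} = \M \cap \N$ is already nonzero in general, the subspace $\St$ in the representation is not canonically determined by $E$ alone — any $\St$ with $(\M \cap \N) + \St = \mul E$ works, so the representation is non-unique. This means the reverse inclusion must be proved by exhibiting \emph{some} valid choice rather than a forced one, and I must take care that the super-idempotent condition $E \subseteq E^2$ (as opposed to full idempotency) is exactly what permits $\mul E$ to exceed $\ran E \cap \ker E$. The delicate point is confirming that $E \subseteq E^2$ places no constraint preventing arbitrary $\St$, i.e.\ that the super-idempotent inequality is automatically compatible with enlarging the multivalued part; I would settle this by showing that $\{0\} \times \St \subseteq E^2$ whenever $\{0\} \times \St \subseteq E$ and $\dom E \cap \ran E \neq \emptyset$ in the relevant sense, which follows since $(0,s) = (0,0) + (0,s)$ can be routed through the idempotent core. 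Assuming the structural results of \cite{idem} as cited, these verifications reduce to routine manipulations with the product and sum of relations defined in Section~\ref{two}.
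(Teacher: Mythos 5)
Your first inclusion is sound: every element of $E=P_{\M,\N}\ \hat{+}\ (\{0\}\times\St)$ has the form $(m+n,m+s)$ with $m\in\M$, $n\in\N$, $s\in\St$, and since $(m+n,m)\in P_{\M,\N}\subseteq E$ and $(m,m+s)=(m,m)+(0,s)\in E$, routing through the intermediate point $m$ gives $E\subseteq E^2$. That is the routine verification one expects, and your idea of factoring through the ``idempotent core'' is the right one (though note the routing for $(0,s)$ goes through $z=0$, not through kernel pairs $(x,0)$).

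The reverse inclusion, however, contains a genuine gap: the reconstruction $\M:=\ran E$ is wrong, and no choice of $\St$ repairs it. Take $E:=\{0\}\times\HH$, which is of the stated form (with $\M=\N=\{0\}$, $\St=\HH$) and is even idempotent. Here $\ran E=\mul E=\HH$ and $\ker E=\{0\}$, so your candidate is $P_{\HH,\{0\}}\ \hat{+}\ (\{0\}\times\St')=I\ \hat{+}\ (\{0\}\times\St')$, whose domain equals $\HH$ for \emph{every} admissible $\St'$, whereas $\dom E=\{0\}$. The containment $E\subseteq$ candidate does hold, but Lemma \ref{lemalr} cannot close the argument because the domains disagree — correctly so, since the two relations are different. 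The subspace that makes the reconstruction work is not $\ran E$ but the fixed-point set $\ker(I-E)=\{x:(x,x)\in E\}$ (in the example, $\{0\}$, which gives back $E$ exactly); for a multivalued projection one has $\ker(I-E)=\ran E$ because $\ran E\subseteq\dom E$, but for a general super-idempotent $\ran E=\ker(I-E)+\mul E$ can be strictly larger, and this is exactly the distinction your argument misses. Proving that $E=P_{\ker(I-E),\ker E}\ \hat{+}\ (\{0\}\times\mul E)$ for every super-idempotent $E$ is the real content of the lemma — it is recorded as Lemma \ref{canonical} and, like the present statement, is quoted by the paper from \cite{idem} rather than proved; so the canonical representation is precisely what your blind attempt would still need to establish.
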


\begin{lema} \label{propmul}  \label{cormul2} Let $T:=\PMN \ \hat{+}  \ (\{0\} \times \St)$. Then $\dom T=\M+\N,$ $\ran T=\M+\St,$  $\ker T=\N+ \M \cap \St$ and $\mul T= \St + \M \cap \N.$
\end{lema}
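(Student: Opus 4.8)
The plan is to compute each of the four quantities $\dom T,$ $\ran T,$ $\ker T$ and $\mul T$ directly from the definition $T=\PMN\ \hat{+}\ (\{0\}\times\St),$ using the fact that for a componentwise (subspace) sum $S\ \hat{+}\ R$ one has $\dom(S\ \hat{+}\ R)=\dom S+\dom R,$ $\ran(S\ \hat{+}\ R)=\ran S+\ran R,$ together with the known data for $\PMN$ recorded after Proposition \ref{L1}, namely $\dom\PMN=\M+\N,$ $\ran\PMN=\M,$ $\ker\PMN=\N$ and $\mul\PMN=\M\cap\N.$ Note that $\{0\}\times\St$ has domain $\{0\},$ range $\St,$ kernel $\{0\}$ and multivalued part $\St.$

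First I would dispose of the two easy identities. For the domain, $\dom T=\dom\PMN+\dom(\{0\}\times\St)=(\M+\N)+\{0\}=\M+\N.$ For the range, $\ran T=\ran\PMN+\ran(\{0\}\times\St)=\M+\St.$ These require nothing beyond the additivity of domain and range under $\hat{+},$ which is recorded in the Preliminaries.

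The remaining two identities are the genuine content, since $\ker$ and $\mul$ do not distribute additively over $\hat{+}$ in general. For the multivalued part I would argue by double inclusion on the defining set $\mul T=\{y:(0,y)\in T\}.$ An element $(0,y)\in T$ has the form $(0,y_1)+(0,y_2)$ with the pair written as $(0,y_1+y_2)$ where $(0,y_1)\in\PMN$ and $(0,y_2)\in\{0\}\times\St$ both having first coordinate $0$; thus $y_1\in\mul\PMN=\M\cap\N$ and $y_2\in\St,$ giving $\mul T\subseteq\St+\M\cap\N,$ and the reverse inclusion is immediate since $\M\cap\N=\mul\PMN\subseteq\mul T$ and $\St\subseteq\mul T.$ For the kernel I would likewise take $(x,0)\in T$ and write it as a sum of a pair $(x,z)\in\PMN$ and $(0,s)\in\{0\}\times\St$ with $z+s=0,$ so that $z=-s\in\St$ while $(x,z)\in\PMN.$ This forces $z\in\ran\PMN\cap\St=\M\cap\St$ and then $(x,z)\in\PMN$ with $z\in\M\cap\St$; using the structure $\PMN=I_\M\ \hat{+}\ (\N\times\{0\})$ from Proposition \ref{L1} one recovers $x\in\N+\M\cap\St,$ and conversely any such $x$ is readily checked to lie in $\ker T.$

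The main obstacle will be the kernel computation, since unlike the other three it genuinely couples $\PMN$ and the added multivalued block through the cancellation $z+s=0$: one must carefully track that the first-coordinate vector $x$ decomposes as a part in $\ker\PMN=\N$ plus a part producing the cancelling range element in $\M\cap\St.$ Invoking the explicit description $\PMN=I_\M\ \hat{+}\ (\N\times\{0\})$ of Proposition \ref{L1} is the cleanest way to make this bookkeeping precise, so I would route the argument through that formula rather than through abstract kernel manipulations.
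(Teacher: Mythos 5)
Your proof is correct: the domain and range identities follow from the additivity of $\dom$ and $\ran$ under $\hat{+}$, and your elementwise treatment of $\mul T$ and $\ker T$ (forcing the first coordinate of the $\{0\}\times\St$ summand to vanish, then routing through $\PMN=I_\M\ \hat{+}\ (\N\times\{0\})$) is exactly the natural computation. The paper itself states this lemma without proof, citing \cite{idem}, and your direct verification is essentially the argument that citation contains, so there is nothing to flag.
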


Super-idempotents admit many different representations, amongst them we distinguish the ones given in the following lemma, which we call the \emph{canonical representations}.
\begin{lema} \label{canonical} Let $E\in \lr(\HH).$ Then
	$E$ is super-idempotent if and only if $E=P_{\ker(I-E), \ker E} \ \hat{+} \ (\{0\}\times \mul E).$
\end{lema}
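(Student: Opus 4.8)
The plan is to prove Lemma \ref{canonical} as an equivalence, treating each implication separately and relying throughout on the structural description of super-idempotents from Lemma \ref{subsuper} together with the domain/range/kernel/multivalued-part formulas of Lemma \ref{propmul}. The central object is the candidate representation $E = P_{\ker(I-E),\ker E} \ \hat{+} \ (\{0\}\times \mul E)$, so the first task is to compute the four subspaces attached to any super-idempotent $E$ and verify that they match the data in this formula.

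For the forward implication, I would start by assuming $E$ is super-idempotent, so by Lemma \ref{subsuper} we may write $E = P_{\M,\N} \ \hat{+} \ (\{0\}\times\St)$ for some subspaces $\M,\N,\St$. Applying Lemma \ref{propmul} gives $\ran E = \M + \St$, $\ker E = \N + \M\cap\St$ and $\mul E = \St + \M\cap\N$. The key identifications I expect to need are that $\ker(I-E)$ (the fixed-point set, equivalently $\{x : (x,x)\in E\}$) equals $\M$, and that $\ker E$ and $\mul E$ recombine correctly. Since $I-E = P_{\M,\N} \ \hat{+} \ (\{0\}\times\St) - I$ can be analyzed via Lemma \ref{suma}-type componentwise reasoning (or directly from the definition $(x,x)\in E$), I would show $\ker(I-E)=\M$ by checking that $(x,x)\in E$ forces $x\in\M$ and conversely $I_\M\subseteq E$. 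Then plugging $\M = \ker(I-E)$, $\ker E = \ker E$ and $\mul E = \mul E$ into $P_{\ker(I-E),\ker E} \ \hat{+} \ (\{0\}\times\mul E)$ and re-applying Lemma \ref{propmul} to this candidate, I would verify it has the same domain, range, kernel and multivalued part as $E$, and that one inclusion holds; Lemma \ref{lemalr} then upgrades the inclusion to equality.

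For the converse, any relation of the form $P_{\ker(I-E),\ker E} \ \hat{+} \ (\{0\}\times\mul E)$ is visibly of the shape $P_{\M,\N} \ \hat{+} \ (\{0\}\times\St)$ described in Lemma \ref{subsuper}, so it is super-idempotent immediately; this direction is essentially a one-line appeal to Lemma \ref{subsuper}. The substantive content is therefore entirely in the forward direction and in correctly matching the parameters.

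I expect the main obstacle to be the identification $\ker(I-E)=\M$ and, relatedly, checking that the recombined triple $(\M,\ker E,\mul E)$ reproduces $E$ rather than some larger or smaller relation: because the decomposition $E=P_{\M,\N} \ \hat{+} \ (\{0\}\times\St)$ is highly non-unique, one must be careful that the canonical choice $\M=\ker(I-E)$, $\N=\ker E$, $\St=\mul E$ is \emph{consistent}, i.e. that $P_{\ker(I-E),\ker E} \ \hat{+} \ (\{0\}\times\mul E)$ genuinely equals $E$ and not merely shares some invariants. The safe route is to establish the inclusion $P_{\ker(I-E),\ker E} \subseteq E$ directly (fixed points and kernel vectors both lie in $E$, and their $\hat{+}$-sum does too since $E$ is a subspace), adjoin $\{0\}\times\mul E\subseteq E$, and then close the argument with Lemma \ref{lemalr} by matching $\dom$ and $\mul$ via Lemma \ref{propmul}. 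This keeps the non-uniqueness from causing trouble, since Lemma \ref{lemalr} only requires the inclusion together with domain and multivalued-part containments.
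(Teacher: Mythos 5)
The paper itself contains no proof of Lemma \ref{canonical}: it is quoted from \cite{idem}, together with Lemmas \ref{subsuper} and \ref{propmul}, so your argument can only be judged on its own terms. Judged that way, the argument you call the ``safe route'' is a correct and complete proof. Set $F:=P_{\ker(I-E),\ker E}\ \hat{+}\ (\{0\}\times \mul E)$. The generators $I_{\ker(I-E)}$, $\ker E\times\{0\}$ and $\{0\}\times\mul E$ all lie in $E$ by the very definitions of fixed points, kernel and multivalued part, so $F\subseteq E$ because $E$ is a subspace of $\HH\times\HH$ (this inclusion needs no idempotency at all). Super-idempotency enters only through Lemma \ref{subsuper}: writing $E=P_{\M,\N}\ \hat{+}\ (\{0\}\times\St)$, Lemma \ref{propmul} gives $\dom E=\M+\N$, and since $I_\M\subseteq E$ and $\N\times\{0\}\subseteq E$ yield $\M\subseteq\ker(I-E)$ and $\N\subseteq\ker E$, one gets $\dom E\subseteq\ker(I-E)+\ker E=\dom F$, while $\mul E\subseteq\mul F$ holds by construction; Lemma \ref{lemalr} then gives $F=E$. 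The converse is, as you say, a one-line appeal to Lemma \ref{subsuper}.

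What you must not keep is the ``key identification'' $\ker(I-E)=\M$: it is false in general, and the verification you sketch (``$(x,x)\in E$ forces $x\in\M$'') fails. From $E=P_{\M,\N}\ \hat{+}\ (\{0\}\times\St)$ one has $(x,x)\in E$ if and only if $x=m+n=m+s$ with $m\in\M$, $n\in\N$, $s\in\St$, which forces $n=s\in\N\cap\St$; hence $\ker(I-E)=\M+\N\cap\St$, strictly larger than $\M$ whenever $\N\cap\St\not\subseteq\M$. For instance, take $\M=\{0\}$ and $\N=\St$ a nonzero subspace: then $E=\St\times\St$ is idempotent with $\ker(I-E)=\St\neq\{0\}=\M$. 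This is consistent with the canonical representation $P_{\M+\N\cap\St,\,\N+\M\cap\St,\,\St+\M\cap\N}$ recorded in the proof of Proposition \ref{propMidem2}, and it is exactly the non-uniqueness you worried about: the triple handed to you by Lemma \ref{subsuper} need not be the canonical one. Your safe route survives precisely because it uses only the inclusions $\M\subseteq\ker(I-E)$ and $\N\subseteq\ker E$, and because Lemma \ref{lemalr} only requires $\dom F$ and $\mul F$ to dominate $\dom E$ and $\mul E$. So in the final write-up, delete the claimed equality $\ker(I-E)=\M$ and present the safe route alone; as written, the middle paragraph of your plan contains a step that would not go through.
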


From the matrix representation of multivalued projections we can easily get the matrix representation for super-idempotents. 

\begin{lema} \label{propdecomRS2} Let $ T\in \lr(\HH)$ be super-idempotent. If $T=\PMN \ \hat{+}  \ (\{0\} \times \St)$ with $\M=\ker(I-T)$, $\N=\ker T$ and $\St=\mul T$ then
	$T$ admits a matrix representation with respect to $\ol{\M}$ if and only if  \begin{equation} 
		\M+\N= (\M + \ol{\M}\cap \N) \oplus  (\M+\N)\cap \M^{\perp} \ \text{and} \ \St=\St \cap \ol{\M} \oplus\St \cap \M^{\perp}.\nonumber
	\end{equation}
\end{lema}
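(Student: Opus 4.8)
The plan is to apply the matrix-representation criterion of Theorem~\ref{theoMLR} directly to $T$, translating the two conditions $P_{\ol{\M}}(\dom T)\subseteq \dom T$ and $P_{\ol{\M}}(\mul T)\subseteq \mul T$ into the stated splitting conditions. By Lemma~\ref{propmul} we know $\dom T=\M+\N$ and $\mul T=\St+\M\cap\N$, so the whole argument reduces to expressing these two $P_{\ol\M}$-invariance conditions in terms of orthogonal direct-sum decompositions, using Lemma~\ref{lemasubs} at each step.

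First I would treat the domain condition. By Theorem~\ref{theoMLR}, $T$ admits a matrix representation with respect to $\ol{\M}$ if and only if $P_{\ol{\M}}(\dom T)\subseteq \dom T$ and $P_{\ol{\M}}(\mul T)\subseteq \mul T$. Since $\dom T=\M+\N$, applying Lemma~\ref{lemasubs} with $\St$ there replaced by $\ol{\M}$ (which is closed) and $\M$ there replaced by $\M+\N$, the condition $P_{\ol{\M}}(\M+\N)\subseteq \M+\N$ is equivalent to the decomposition $\M+\N=\ol{\M}\cap(\M+\N)\oplus \M^{\perp}\cap(\M+\N)$. The one point requiring care is identifying $\ol{\M}\cap(\M+\N)$ with $\M+\ol{\M}\cap\N$; this is exactly the content already established in Lemma~\ref{lemarepSp}, where condition $ii)$ ($P_{\ol{\M}}(\M+\N)\subseteq\M+\N$) was shown equivalent to $iii)$ ($\M+\N=(\M+\ol{\M}\cap\N)\oplus(\M+\N)\cap\M^{\perp}$). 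So the domain half of the statement is precisely Lemma~\ref{lemarepSp} applied to $P_{\M,\N}$, and I would simply invoke it.

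Next I would handle the multivalued condition. Here I would again apply Lemma~\ref{lemasubs}, this time with $\M$ there replaced by $\mul T=\St+\M\cap\N$, to conclude that $P_{\ol{\M}}(\mul T)\subseteq\mul T$ is equivalent to $\mul T=\ol{\M}\cap\mul T\oplus \M^{\perp}\cap\mul T$. The task is then to show this is equivalent to the cleaner condition $\St=\St\cap\ol{\M}\oplus\St\cap\M^{\perp}$ stated in the lemma. The key observation is that $\M\cap\N\subseteq\M\subseteq\ol{\M}$, so the component $\M\cap\N$ already lies entirely inside $\ol{\M}$ and contributes nothing to the $\M^{\perp}$-part; thus the $P_{\ol{\M}}$-invariance of $\St+\M\cap\N$ collapses to the $P_{\ol{\M}}$-invariance of $\St$ alone. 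I expect this reduction to be the main obstacle, since I must verify carefully that $P_{\ol{\M}}(\St+\M\cap\N)\subseteq\St+\M\cap\N$ holds exactly when $P_{\ol{\M}}(\St)\subseteq\St$, i.e.\ that the $\M\cap\N$ summand can be absorbed without changing the invariance. One direction is immediate; for the other I would use that $P_{\ol{\M}}$ fixes $\M\cap\N$ pointwise, so $P_{\ol{\M}}(\St)=P_{\ol{\M}}(\St+\M\cap\N)\ominus(\M\cap\N)$ up to the already-invariant piece, letting me pull the $\St$-invariance back out.

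Finally, combining the two equivalences via Theorem~\ref{theoMLR} yields that $T$ is representable with respect to $\ol{\M}$ if and only if both displayed splitting conditions hold simultaneously, which is exactly the claim. I would keep the write-up short, citing Lemma~\ref{lemarepSp} wholesale for the domain condition and spelling out only the $\St$-absorption argument for the multivalued condition, since that is where the genuine content lies.
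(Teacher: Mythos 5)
Your overall strategy is exactly the paper's (Theorem~\ref{theoMLR} combined with Lemmas~\ref{propmul} and \ref{lemasubs}, with Lemma~\ref{lemarepSp} handling the domain condition), and the domain half of your argument is correct as written. The gap is in the multivalued half, precisely at the step you single out as ``where the genuine content lies.'' For the forward implication of the lemma you need: $P_{\ol{\M}}$-invariance of $\mul T=\St+\M\cap\N$ implies $P_{\ol{\M}}$-invariance of $\St$. As a statement about general subspaces this is false: take $\HH=\mathbb{C}^2$, $\M=\N=\ol{\M}=\Span\{e_1\}$ and $\St=\Span\{e_1+e_2\}$; then $\St+\M\cap\N=\mathbb{C}^2$ is trivially $P_{\ol{\M}}$-invariant, while $P_{\ol{\M}}(\St)=\Span\{e_1\}\not\subseteq\St$. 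The same example shows that your proposed justification, ``$P_{\ol{\M}}(\St)=P_{\ol{\M}}(\St+\M\cap\N)\ominus(\M\cap\N)$ up to the already-invariant piece,'' is not a valid identity. What $P_{\ol{\M}}$ fixing $\M\cap\N$ actually gives you is $P_{\ol{\M}}(\St+\M\cap\N)=P_{\ol{\M}}(\St)+\M\cap\N$, hence only $P_{\ol{\M}}(\St)\subseteq\St+\M\cap\N$, which is weaker than what you need.

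The missing observation is that no absorption argument is needed at all, because the hypothesis is the \emph{canonical} representation: $\St=\mul T$. This forces $\M\cap\N\subseteq\St$ automatically; indeed, if $x\in\M\cap\N=\ker(I-T)\cap\ker T$ then $(x,x)\in T$ and $(x,0)\in T$, so $(0,x)\in T$, i.e.\ $x\in\mul T=\St$ (equivalently, compare $\mul T=\St+\M\cap\N$ from Lemma~\ref{propmul} with the hypothesis $\St=\mul T$). Therefore $\St+\M\cap\N=\St$ on the nose, the condition $P_{\ol{\M}}(\mul T)\subseteq\mul T$ is literally $P_{\ol{\M}}(\St)\subseteq\St$, and Lemma~\ref{lemasubs} converts it into the stated decomposition of $\St$. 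With this one-line observation your proof closes and coincides with the paper's; without it, the forward direction of the equivalence remains unproven, since your counterexample-prone reduction is true in this setting only for the reason you did not identify.
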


\begin{proof} It follows from Theorem \ref{theoMLR} and Lemmas \ref{propmul} and \ref{lemasubs}.
\end{proof}

From now on, given a super-idempotent $T$ we consider the canonical representation given in Lemma \ref{canonical}, i.e., $$T:=\PMN \ \hat{+}  \ (\{0\} \times \St)$$ where $\M=\ker(I-T)$, $\N=\ker T,$ and $\St=\mul T.$ 

If a super-idempotent $T$ admits a matrix representation with respect to $\ol{\M}$ then $\M+\N= (\M + \ol{\M}\cap \N) \oplus  (\M+\N)\cap \M^{\perp},$ so that by Theorem \ref{repSp}, $P_{\M,\N}$ admits a matrix representation with respect to $\ol{\M}.$ On the other hand, $\St=\St \cap \ol{\M}\oplus\St \cap \M^{\perp},$ then $\HH \times \St$ admits a matrix representation with respect to $\ol{\M}.$ Finally,  the sum of both matrix representations gives the matrix representation of $T$ as shows the following result.

\begin{prop} \label{Tsuper} Let $T\in \lr(\HH)$ be super-idempotent. If $T$ admits a matrix representation with respect to $\ol{\M}$ then 
	\begin{equation} 
		T=\matriz{P_{\M ,\ol{\M} \cap \N}}{x \ \hat{+} \ (\{0\}\times \St \cap \ol{\M})}{0 \ \hat{+} \ (\{0\}\times \St \cap \M^{\perp})}{0}_{\ol{\M}}\nonumber
	\end{equation}
	where $x =P_{\M,\N}\cap (\M^\bot\times\HH).$
\end{prop}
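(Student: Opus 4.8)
The plan is to decompose $T=P_{\M,\N} \ \hat{+} \ (\{0\}\times \St)$ using its canonical representation and to show that both summands individually admit a matrix representation with respect to $\ol{\M}$, then add those representations with Lemma \ref{suma}. Since $T$ admits a matrix representation with respect to $\ol{\M}$, Lemma \ref{propdecomRS2} gives both $\M+\N= (\M + \ol{\M}\cap \N) \oplus  (\M+\N)\cap \M^{\perp}$ and $\St=\St \cap \ol{\M} \oplus \St \cap \M^{\perp}$. The first condition is exactly item $iii)$ of Lemma \ref{lemarepSp}, so $P_{\M,\N}$ admits a matrix representation with respect to $\ol{\M}$, and Theorem \ref{repSp} yields
$$
P_{\M,\N}=\matriz{P_{\M, \ol{\M}\cap\N}}{x}{0}{0}_{\ol{\M}}, \qquad x=P_{\M,\N}\cap (\M^{\perp} \times \HH).
$$

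Next I would treat the multivalued summand $\{0\}\times \St$, which as a relation is $\HH \times \St$ restricted to input $0$; more precisely, I want to write $\{0\}\times \St$ as a block matrix. Using the second condition $\St=\St \cap \ol{\M} \oplus \St \cap \M^{\perp}$ together with Lemma \ref{lemasubs}, the subspace $\St$ splits $P_{\ol\M}$-invariantly, so $\{0\}\times \St$ admits a matrix representation with respect to $\ol{\M}$. The natural candidate is
$$
\{0\}\times \St=\matriz{\{0\}\times \St \cap \ol{\M}}{\{0\}\times \St \cap \ol{\M}}{\{0\}\times \St \cap \M^{\perp}}{\{0\}\times \St \cap \M^{\perp}}_{\ol{\M}},
$$
and I would verify this by checking equality of domains and multivalued parts via Lemma \ref{lemalr}, noting the multivalued part on the $\ol{\M}$-component recovers $\St\cap\ol{\M}$ and on the $\M^{\perp}$-component recovers $\St\cap\M^{\perp}$. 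However, since the input component is $\{0\}$, the off-diagonal contributions collapse; matching against the target block structure $\matriz{\cdot}{\{0\}\times \St \cap \ol{\M}}{\{0\}\times \St \cap \M^{\perp}}{\cdot}$ I can absorb $\St\cap\ol{\M}$ into the $(1,2)$-entry and $\St\cap\M^{\perp}$ into the $(2,1)$-entry, since the zero domain forces the diagonal entries on $\St$ to be redundant.

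Finally I would apply Lemma \ref{suma} to add the two representations blockwise. The $(1,1)$-entry is $P_{\M,\ol{\M}\cap\N}$ (the $\St$-part contributes nothing diagonally); the $(1,2)$-entry becomes $x \ \hat{+} \ (\{0\}\times \St \cap \ol{\M})$; the $(2,1)$-entry becomes $0 \ \hat{+} \ (\{0\}\times \St \cap \M^{\perp})$; and the $(2,2)$-entry is $0$. This gives exactly the asserted matrix. The main obstacle I anticipate is bookkeeping the multivalued contributions correctly: because Lemma \ref{suma} adds blocks via the relation-sum $+$ (which intersects domains) rather than the componentwise sum $\hat{+}$, I must confirm that on each block the relevant domains are compatible—here the $P_{\M,\N}$ blocks and the $\{0\}\times\St$ blocks have the right domain overlap ($\{0\}$ lies in every domain), so the two notions of sum agree and the purely multivalued pieces attach cleanly as $\hat{+}$ summands. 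Verifying this domain compatibility, and that the splitting $\St=\St\cap\ol{\M}\oplus\St\cap\M^{\perp}$ feeds the correct piece into each off-diagonal block, is where the care is needed; the rest is the routine domain/multivalued-part comparison through Lemma \ref{lemalr}.
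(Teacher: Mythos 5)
Your overall architecture (decompose $T$, represent each summand, add blockwise via Lemma \ref{suma}) is the same as the paper's, and your treatment of the first summand --- using Lemma \ref{propdecomRS2} and Lemma \ref{lemarepSp} to see that $P_{\M,\N}$ is representable and Theorem \ref{repSp} to represent it --- is correct. But the final step has a genuine gap: Lemma \ref{suma} computes the block matrix of the \emph{relation}-sum $+$, and for your two summands the relation-sum is not $T$. Indeed
$$\dom\bigl(P_{\M,\N}+(\{0\}\times\St)\bigr)=\dom P_{\M,\N}\cap\dom(\{0\}\times\St)=\{0\},$$
so $P_{\M,\N}+(\{0\}\times\St)=\{0\}\times(\M\cap\N+\St)$, which differs from $T=P_{\M,\N}\ \hat{+}\ (\{0\}\times\St)$ whenever $\M+\N\neq\{0\}$. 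Your justification --- that ``$\{0\}$ lies in every domain, so the two notions of sum agree'' --- is exactly backwards: a summand with domain $\{0\}$ is the worst case for the relation-sum, since intersecting domains collapses the sum's domain to $\{0\}$ rather than leaving it equal to $\dom P_{\M,\N}$. The same collapse occurs blockwise: the $(1,2)$-entry produced by Lemma \ref{suma} from your representations would be the relation-sum $x+(\{0\}\times\St\cap\ol{\M})$, whose domain is $\{0\}$, not the asserted $x\ \hat{+}\ (\{0\}\times\St\cap\ol{\M})$.

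The missing step is precisely the paper's trick: before invoking Lemma \ref{suma}, replace the purely multivalued summand by a \emph{full-domain} relation. One checks with Lemma \ref{lemalr} that $T=P_{\M,\N}+(\HH\times\St)$: the inclusion $T\subseteq P_{\M,\N}+(\HH\times\St)$ is clear, and both sides have domain $\M+\N$ and multivalued part $\St+\M\cap\N=\mul T$ (Lemma \ref{propmul}). Then one represents
$$\HH\times\St=\matriz{0}{\M^{\perp}\times(\St\cap\ol{\M})}{\ol{\M}\times(\St\cap\M^{\perp})}{0}_{\ol{\M}},$$
all of whose blocks have full domains, so Lemma \ref{suma} applies harmlessly: the $(1,1)$-entry is $P_{\M,\ol{\M}\cap\N}+0=P_{\M,\ol{\M}\cap\N}$, and the off-diagonal relation-sums $x+(\M^{\perp}\times(\St\cap\ol{\M}))$ and $0+(\ol{\M}\times(\St\cap\M^{\perp}))$ now genuinely coincide with the componentwise sums $x\ \hat{+}\ (\{0\}\times\St\cap\ol{\M})$ and $0\ \hat{+}\ (\{0\}\times\St\cap\M^{\perp})$, because the second summands have full domain ($\dom x\subseteq\M^{\perp}$). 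Without this substitution your argument computes the wrong relation, namely $\{0\}\times(\M\cap\N+\St)$.
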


\begin{proof} It holds that $T\subseteq P_{\M, \N} + (\HH \times \St),$ and both linear relations have the same domain and multivalued part, because $\St=\mul T.$ Then $T=P_{\M, \N} + (\HH\times \St).$
	On the one hand, applying Theorem \ref{repSp},
	$$P_{\M,\N}=\matriz{P_{\M, \overline{\M} \cap \N}}{x}{0}{0}_{\ol{\M}}$$
	where $x=P_{\M,\N}\cap (\M^\bot\times\HH).$ On the other hand, if $\St_1:=\St \cap \ol{\M}$ and $\St_2:=\St \cap \M^{\perp}$ then $\St=\St_1\oplus\St_2$ and
	$$\HH \times \St=\matriz{0 }{\M^{\perp} \times \St_1 }{\ol{\M}\times \St_2}{0}_{\ol{\M}}.$$
	Then, by Lemma \ref{suma},  
	\begin{align*}
		T&=\matriz{P_{\M, \N\cap\overline{\M}}}{x+\M^{\perp} \times \St_1}{\ol{\M} \times \St_2}{0}_{\ol{\M}}=\matriz{P_{\M, \N\cap\overline{\M}}}{x \ \hat{+} \ (\{0\}\times \St_1)}{0 \ \hat{+} \ (\{0\}\times \St_2)}{0}_{\ol{\M}}.
	\end{align*}
\end{proof}

\begin{cor} \label{semiMR2} Let $T  \in \lr(\HH)$ be super-idempotent. Then $T$ admits the matrix representation 
	\begin{equation} \label{MatrixidemP2}
		T=\matriz{I_\M}{x \ \hat{+} \ (\{0\}\times \St \cap \ol{\M})}{0 \ \hat{+} \ (\{0\}\times \St \cap \M^{\perp})}{0}_{\ol{\M}}
	\end{equation}
	where $x =P_{\M,\N}\cap (\M^\bot\times\HH)$ if and only if $\N \subseteq \M \oplus \M^{\perp}.$ 
\end{cor}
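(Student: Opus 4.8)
The plan is to derive Corollary \ref{semiMR2} as a specialization of Proposition \ref{Tsuper}, exactly paralleling how Corollary \ref{semiMR} was obtained from Theorem \ref{repSp}. The whole content reduces to showing that, under the extra hypothesis $\N \subseteq \M \oplus \M^{\perp}$, the $(1,1)$-coefficient $P_{\M,\ol{\M}\cap\N}$ collapses to $I_\M$, and that the representability hypothesis of Proposition \ref{Tsuper} is automatically available so that the displayed matrix is legitimate.

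First I would establish representability. Recall that $T$ admits a matrix representation with respect to $\ol{\M}$ iff, by Lemma \ref{propdecomRS2}, both $\M+\N = (\M + \ol{\M}\cap \N) \oplus (\M+\N)\cap \M^{\perp}$ and $\St = \St \cap \ol{\M} \oplus \St \cap \M^{\perp}$ hold. When $\N \subseteq \M \oplus \M^{\perp}$, the first condition is immediate: this is precisely the situation of Corollary \ref{semiMR}, where $\N \subseteq \M \oplus \M^{\perp}$ forces $P_{\M,\N}$ to be representable with $(1,1)$-coefficient $I_\M$. So the forward direction of the ``if and only if'' follows once I also note that $T$ representable implies (via Lemma \ref{propdecomRS2} and Theorem \ref{repSp}) that $P_{\M,\N}$ is representable, and then Corollary \ref{semiMR} gives $\N \subseteq \M \oplus \M^{\perp}$.

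For the converse, assuming $\N \subseteq \M \oplus \M^{\perp}$, I would invoke Corollary \ref{semiMR} to get that $P_{\M,\N}$ is representable with $\ol{\M}\cap \N = \M \cap \N$ and $P_{\M,\ol{\M}\cap\N} = P_{\M,\M\cap\N} = I_\M$ after absorbing $\M\cap\N$ into $\mul x$, exactly as in the proof of Corollary \ref{semiMR}. The remaining representability condition $\St = \St\cap\ol{\M} \oplus \St\cap\M^{\perp}$ is not automatic from $\N \subseteq \M\oplus\M^{\perp}$ alone; however, in the matrix displayed in \eqref{MatrixidemP2} the multivalued contributions $\{0\}\times \St\cap\ol{\M}$ and $\{0\}\times \St\cap\M^{\perp}$ together recover all of $\mul T = \St$ only if this splitting holds, so I would observe that writing the matrix \eqref{MatrixidemP2} presupposes exactly this decomposition; hence the statement is read as: $T$ admits \emph{this particular} representation iff $\N \subseteq \M\oplus\M^{\perp}$ (with $\St$ already split, inherited from $T$ being representable). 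Concretely, I would start from Proposition \ref{Tsuper}, substitute $P_{\M,\ol{\M}\cap\N} = I_\M$ and, using Lemma \ref{suma} to add the multivalued blocks, verify that domain and multivalued part of the right-hand side agree with those of $T$ via Lemmas \ref{propmul} and \ref{lemalr}.

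The main obstacle I anticipate is bookkeeping the multivalued part rather than any genuine difficulty: I must check that absorbing $\M\cap\N$ into $\mul x$ (so that $P_{\M,\M\cap\N}$ becomes $I_\M$ in the $(1,1)$-slot) does not double-count or omit any piece of $\mul T = \St + \M\cap\N$ once the $\St$-blocks are added. Carefully applying Lemma \ref{suma} to combine the representation of $P_{\M,\N}$ from Corollary \ref{semiMR} with the representation of $\HH\times\St$ used in Proposition \ref{Tsuper}, and then closing with Lemma \ref{lemalr} (matching inclusion, domain, and multivalued part), should resolve this cleanly.
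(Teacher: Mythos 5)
Your route is the paper's own: specialize Proposition \ref{Tsuper}, use $\ol{\M}\cap\N=\M\cap\N$ (valid once $\N\subseteq\M\oplus\M^{\perp}$) to replace the $(1,1)$-entry $P_{\M,\ol{\M}\cap\N}$ by $I_\M$ after absorbing $\M\cap\N=\mul x$ into the multivalued part of the $(1,2)$-entry, and close with Lemma \ref{lemalr}. Moreover, your remark that the splitting $\St=\St\cap\ol{\M}\oplus\St\cap\M^{\perp}$ does not follow from $\N\subseteq\M\oplus\M^{\perp}$, so that representability of $T$ must be treated as a standing hypothesis inherited from Proposition \ref{Tsuper}, is correct and in fact necessary for the statement to hold: in $\HH=\mathbb{C}^2$ the relation $T=I_\M \ \hat{+} \ (\{0\}\times\St)$ with $\M=\Span\{e_1\}$, $\N=\ker T=\{0\}$, $\St=\mul T=\Span\{e_1+e_2\}$ is super-idempotent and in canonical form, $\N\subseteq\M\oplus\M^{\perp}$, yet $P_{\ol{\M}}(\mul T)\not\subseteq\mul T$, so by Theorem \ref{theoMLR} no matrix representation with respect to $\ol{\M}$ exists at all. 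The paper's proof passes over this point silently; you make it explicit, which is a genuine improvement in precision.

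The genuine gap is in your ``only if'' direction. You argue: \eqref{MatrixidemP2} implies $T$ is representable, hence $P_{\M,\N}$ is representable (fine, by Lemma \ref{propdecomRS2} together with Lemma \ref{lemarepSp}), ``and then Corollary \ref{semiMR} gives $\N\subseteq\M\oplus\M^{\perp}$.'' That last step is a non sequitur: Corollary \ref{semiMR} produces $\N\subseteq\M\oplus\M^{\perp}$ only from the \emph{specific} representation \eqref{MatrixP} of $P_{\M,\N}$, the one with $(1,1)$-entry $I_\M$; generic representability of $P_{\M,\N}$ does not suffice. Indeed, if $\M$ is dense and not closed, every $P_{\M,\N}$ is representable with respect to $\ol{\M}=\HH$ (both conditions in Theorem \ref{theoMLR} are then trivial), while $\M\oplus\M^{\perp}=\M$ need not contain $\N$. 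To argue your way you would first have to extract the representation \eqref{MatrixP} of $P_{\M,\N}$ from the representation \eqref{MatrixidemP2} of $T$, a step you never carry out. The repair is the short computation the paper summarizes as ``the converse is straightforward'': if \eqref{MatrixidemP2} holds, the domain formula for block matrices gives $\dom T=\M\oplus\dom x$ with $\dom x\subseteq\M^{\perp}$, and since $\dom T=\M+\N$ by Lemma \ref{propmul}, already $\N\subseteq\dom T\subseteq\M\oplus\M^{\perp}$, exactly as in the converse of Corollary \ref{semiMR}.
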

\begin{proof} As in Corollary \ref{semiMR}, if $\N \subseteq \M \oplus \M^{\perp}$ then $P_{\ol{\M}}(\M+\N) \subseteq \M.$ Then, by Proposition \ref{Tsuper} and the fact that $\ol{\M} \cap \N=\M \cap \N,$ \eqref{MatrixidemP2} follows. The converse is straightforward.
\end{proof}

\begin{prop} \label{propMidem2} Given a subspace $\M$ of $\HH,$ let $\W, \St_1$ be subspaces of $\ol{\M},$ $\St_2$ be a subspace of $\M^{\perp}$ and $x\in \lr(\M^{\perp},\ol{\M})$  with $\ran x \subseteq \M.$ Define  $$E:=\matriz{P_{\M, \W}}{x \ \hat{+} \ (\{0\}\times \St_1)}{0 \ \hat{+} \ (\{0\}\times \St_2)}{0}_{\ol{\M}}.$$ Then $E$ is super-idempotent.
	
\end{prop}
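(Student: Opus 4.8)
The plan is to evaluate the linear relation $E$ generated by the block matrix directly from the definition and then match it against the canonical description of super-idempotents in Lemma \ref{subsuper}. First I would unwind the $2\times 2$ block-matrix relation with respect to $\ol{\M}$: a pair belongs to $E$ exactly when it has the form $(x_1+x_2,(w_1+z_1)+(w_2+z_2))$ with $(x_1,w_1)\in P_{\M,\W}$, $(x_2,z_1)\in x\,\hat{+}\,(\{0\}\times\St_1)$, $(x_1,w_2)\in 0_{\ol{\M}}\,\hat{+}\,(\{0\}\times\St_2)$ and $(x_2,z_2)\in 0_{\M^{\perp}}$, where $x_1\in\ol{\M}$ and $x_2\in\M^{\perp}$. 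Since the $(2,1)$-block equals $\ol{\M}\times\St_2$ and the $(2,2)$-block $0_{\M^{\perp}}=\M^{\perp}\times\{0\}$ forces $z_2=0$, the bottom row contributes only an arbitrary $w_2\in\St_2$ in the second coordinate. Writing $x_1=m+w$ with $m\in\M,\ w\in\W$ (so $w_1=m$), $x_2=u$ with $(u,v)\in x$, and $z_1=v+s$ with $s\in\St_1$, one finds
\[
E=\{(m+w+u,\ m+v+s+w_2):m\in\M,\,w\in\W,\,(u,v)\in x,\,s\in\St_1,\,w_2\in\St_2\}.
\]

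Next I would separate off the part with $s=w_2=0$. By Proposition \ref{prop27}, whose hypotheses $\W\subseteq\ol{\M}$ and $\ran x\subseteq\M$ are exactly the ones assumed here, the relation $\matriz{P_{\M,\W}}{x}{0}{0}_{\ol{\M}}$ is the multivalued projection $P_{\M,-x+\W}$, that is $\{(m+w+u,\ m+v):m\in\M,\,w\in\W,\,(u,v)\in x\}$. Comparing with the displayed formula for $E$, the remaining parameters $s\in\St_1$ and $w_2\in\St_2$ add an arbitrary element of $\St_1+\St_2$ to the second coordinate and leave the first coordinate untouched, so
\[
E=P_{\M,-x+\W}\,\hat{+}\,(\{0\}\times(\St_1+\St_2)).
\]
As $-x+\W$ and $\St_1+\St_2$ are subspaces of $\HH$, Lemma \ref{subsuper} then yields at once that $E$ is super-idempotent.

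The point requiring the most care is that the block entries are assembled with the subspace sum $\hat{+}$ (as in $x\,\hat{+}\,(\{0\}\times\St_1)$), not the relation sum $+$ appearing in Lemma \ref{suma}; indeed $x+(\{0\}\times\St_1)=\{0\}\times(\mul x+\St_1)$, which is not the intended block. This prevents splitting the matrix as a termwise sum of two block matrices and invoking Lemma \ref{suma}, so a direct set-level computation of $E$ is the reliable route. The only other delicate step is checking that the bottom row feeds solely the multivalued part, which is precisely what the vanishing $(2,2)$-block guarantees and what makes the clean factorization $E=P_{\M,-x+\W}\,\hat{+}\,(\{0\}\times(\St_1+\St_2))$ available.
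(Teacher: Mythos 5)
Your proof is correct: the identity $E=P_{\M,-x+\W}\,\hat{+}\,(\{0\}\times(\St_1\oplus\St_2))$, obtained via Proposition \ref{prop27} and fed into Lemma \ref{subsuper}, is exactly the paper's conclusion, and your element-level computation of the generated relation is accurate. The difference is in how that identity is reached. The paper does precisely what you claim is impossible: it splits
$$E=\matriz{P_{\M, \W}}{x}{0}{0}_{\ol{\M}}+\matriz{0 }{\M^{\perp} \times \St_1 }{\ol{\M}\times \St_2}{0}_{\ol{\M}}$$
as a termwise sum of block matrices, identifies the first summand as $P_{\M,-x+\W}$ by Proposition \ref{prop27}, notes that the second generates $\HH\times(\St_1\oplus\St_2)$, and observes that the relation sum of these two is the componentwise sum $P_{\M,-x+\W}\,\hat{+}\,(\{0\}\times(\St_1\oplus\St_2))$. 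Your objection --- that the relation sum $x+(\{0\}\times\St_1)$ collapses to $\{0\}\times(\mul x+\St_1)$ --- is a correct computation but attacks a splitting nobody uses: the paper's second matrix has $(1,2)$-block $\M^{\perp}\times\St_1$, not $\{0\}\times\St_1$, and since $\dom x\subseteq\M^{\perp}$ one gets $x+(\M^{\perp}\times\St_1)=x\,\hat{+}\,(\{0\}\times\St_1)$, which is exactly the intended block (and likewise $0_{\ol{\M}}+(\ol{\M}\times\St_2)=\ol{\M}\times\St_2$). So the Lemma \ref{suma} route is available after all, provided the auxiliary blocks are taken with full domains; this is the same device used in the proof of Proposition \ref{Tsuper}. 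What your direct computation buys is self-containment --- it avoids the matrix calculus entirely and makes the structure of $E$ visible at once --- at the cost of redoing by hand what the splitting delegates to Proposition \ref{prop27} and the block-sum formalism.
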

\begin{proof} It holds that
	$E=\matriz{P_{\M, \W}}{x}{0}{0}_{\ol{\M}}+\matriz{0 }{\M^{\perp} \times \St_1 }{\ol{\M}\times \St_2}{0}_{\ol{\M}}.$ Then, by Proposition \ref{prop27}, 
	$$E=P_{\M,\N} \ \hat{+} \ (\{0\} \times \St),$$ where $\N:=\W+(-x)$ and $\St:=\St_1 \oplus \St_2.$ By Lemmas \ref{subsuper} and \ref{propmul}, $E$ is super-idempotent with canonical representation 
	$E=P_{\M+\N\cap \St, \N+\M\cap\St, \St+\M\cap\N}.$
\end{proof}

\begin{cor} \label{corMidem2id} Given $\M$ a subspace of $\HH,$ $\St_1$ and  $\St_2$ subspaces of $\ol{\M}$ and $\M^{\perp},$ respectively, and $x\in 
	\lr(\M^{\perp},\ol{\M})$ with $\ran x \subseteq \M.$ Consider 
	\begin{equation} \label{Ematrixsuper}
		E:=\matriz{I_\M}{x \ \hat{+} \ (\{0\}\times \St_1)}{0 \ \hat{+} \ (\{0\}\times \St_2)}{0}_{\ol{\M}}. 
	\end{equation}
	Then $E$ is super-idempotent with canonical representation 
	\begin{equation} \label{ECanonicalR}
		E=P_{\M\oplus \St_2 \cap \dom x, -x+\St_1\cap \M, (\St_1+\mul x)\oplus \St_2}.
	\end{equation}
\end{cor}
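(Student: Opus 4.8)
The plan is to specialize Proposition~\ref{propMidem2} to the case $\W=\M$ and then simplify the canonical representation it produces. The matrix in \eqref{Ematrixsuper} is exactly the matrix of Proposition~\ref{propMidem2} with the $(1,1)$-block $P_{\M,\W}$ replaced by $I_\M=P_{\M,\{0\}}$, so first I would invoke that proposition with $\W=\{0\}$ to conclude immediately that $E$ is super-idempotent and that it has a canonical representation of the form $E=P_{\M,\N}\ \hat{+}\ (\{0\}\times\St)$ with $\N=\{0\}+(-x)=-x$ and $\St=\St_1\oplus\St_2$. The remaining work is purely to identify the three subspaces $\ker(I-E)$, $\ker E$ and $\mul E$ appearing in \eqref{ECanonicalR}, using Lemma~\ref{propmul} applied to this concrete $\N$ and $\St$.

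Concretely, by Lemma~\ref{propmul} the canonical representation of $E=P_{\M,-x}\ \hat{+}\ (\{0\}\times\St)$ is $P_{\ker(I-E),\ker E}\ \hat{+}\ (\{0\}\times\mul E)$ with
\[
\ker(I-E)=\M+\St, \qquad \ker E=(-x)+\M\cap\St, \qquad \mul E=\St+\M\cap(-x),
\]
where here $\St=\St_1\oplus\St_2$. The bulk of the proof is then three subspace computations. For $\ker(I-E)=\M+\St_1\oplus\St_2$, since $\St_1\subseteq\ol{\M}$ I would absorb the part of $\St_1$ already in $\M$ and argue that only the $\St_2$-direction meeting $\dom x$ survives outside $\M$; this should yield $\M\oplus(\St_2\cap\dom x)$. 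For $\ker E$, I would use that $(-x)$, viewed as the subspace $\{-u+v:(u,v)\in x\}=\{v-u:(u,v)\in x\}$ (via the Notation identifying $x$ with a subspace of $\HH$), interacts with $\M\cap\St$; since $\ran x\subseteq\M$, $\dom x\subseteq\M^{\perp}$ and $\St_1\subseteq\ol{\M}$, I expect $\M\cap\St$ to reduce to $\St_1\cap\M$, giving $\ker E=-x+\St_1\cap\M$. For $\mul E$, the term $\M\cap(-x)$ must be pinned down: an element $v-u\in\M$ forces $u\in\M^{\perp}\cap\ol{\M}=\{0\}$ (as in the proof of Proposition~\ref{prop27}), so $\M\cap(-x)\subseteq\mul x\subseteq\M$, and then $\mul E=\St+\mul x=(\St_1+\mul x)\oplus\St_2$, using that $\mul x\subseteq\M\subseteq\ol{\M}$ is orthogonal to $\St_2\subseteq\M^{\perp}$.

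The main obstacle I anticipate is handling the subspace $-x$ carefully under the identification of a relation in $\lr(\M^{\perp},\ol{\M})$ with a subspace of $\HH$, and making sure the orthogonal-direct-sum decompositions in \eqref{ECanonicalR} are genuinely direct. The decompositions $\M\oplus(\St_2\cap\dom x)$ and $(\St_1+\mul x)\oplus\St_2$ rely on $\St_2\cap\dom x\subseteq\M^{\perp}$ being orthogonal to $\M$ and on $\St_2\subseteq\M^{\perp}$ being orthogonal to $\St_1+\mul x\subseteq\ol{\M}$, which follow directly from the hypotheses $\St_2\subseteq\M^{\perp}$, $\St_1\subseteq\ol{\M}$, $\ran x\subseteq\M$ and $\dom x\subseteq\M^{\perp}$. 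Once these orthogonality facts are recorded, the three computations are routine, and the conclusion \eqref{ECanonicalR} follows by substituting them into the canonical form supplied by Lemma~\ref{propmul}.
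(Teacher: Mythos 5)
Your overall strategy is the paper's strategy: specialize Proposition~\ref{propMidem2} to $\W=\{0\}$, so that $E=P_{\M,-x}\ \hat{+}\ (\{0\}\times\St)$ with $\St=\St_1\oplus\St_2$, and then identify the three subspaces of the canonical representation. Your computations of $\ker E$ and $\mul E$ are correct and match the paper's. The genuine gap is in the first subspace: you assert $\ker(I-E)=\M+\St$, but what Lemma~\ref{propmul} gives is $\ran E=\M+\St$, and for a super-idempotent $\ker(I-E)$ and $\ran E$ differ in general (they coincide for multivalued projections, not for genuine super-idempotents). Indeed, $(w,w)\in E$ means $w=m+n=m+s$ with $m\in\M$, $n\in -x$, $s\in\St$, which forces $n=s\in(-x)\cap\St$, so that
\[
\ker(I-E)=\M+(-x)\cap\St,
\]
and this is the subspace the paper works with (it is already recorded at the end of the proof of Proposition~\ref{propMidem2}, where the canonical representation is stated as $P_{\M+\N\cap\St,\,\N+\M\cap\St,\,\St+\M\cap\N}$). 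With your formula the conclusion is unreachable: $\M+\St_1\oplus\St_2$ contains all of $\St_2$, so no absorption argument can cut it down to $\M\oplus(\St_2\cap\dom x)$. A concrete failure: take $x=\{(0,0)\}$, $\St_1=\{0\}$ and $\St_2\neq\{0\}$; then $E=\{(m,m+s_2):m\in\M,\ s_2\in\St_2\}$, so $\ker(I-E)=\M=\M\oplus(\St_2\cap\dom x)$, whereas your formula yields $\M\oplus\St_2$.

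Once the first subspace is replaced by $\M+(-x)\cap\St$, the computation proceeds as in the paper: using $\ol{\M}\cap\M^{\perp}=\{0\}$, $\ran x\subseteq\M$ and $\dom x\subseteq\M^{\perp}$ one gets $(-x)\cap(\St_1\oplus\St_2)=\{u-v:(u,v)\in x\cap(\St_2\times\St_1)\}$, and absorbing $v\in\ran x\subseteq\M$ into $\M$ gives $\M+(-x)\cap\St=\M\oplus\dom\bigl(x\cap(\St_2\times\St_1)\bigr)$. Be aware that even here there is a subtlety which your plan (and, arguably, the paper's one-line equivalence ``$z=m+u-v$ iff $z=m'+u$'') passes over: identifying $\dom\bigl(x\cap(\St_2\times\St_1)\bigr)$ with $\St_2\cap\dom x$ requires that every $u\in\St_2\cap\dom x$ admit some $v\in\St_1$ with $(u,v)\in x$, i.e. $x(\St_2)\subseteq\St_1+\mul x$, which is exactly the idempotency condition of the theorem that follows and is not automatic for a general super-idempotent. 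So when you repair the first step, this identification needs an explicit justification (or an explicit hypothesis) rather than being treated as routine.
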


\begin{proof} Applying Proposition \ref{propMidem2} with $\W=\{0\},$ it holds that $E=P_{\M,\N} \ \hat{+} \ (\{0\} \times \St)$ is super-idempotent with $\St=\St_1 \oplus \St_2$ and $\N=\{u-v: (u,v) \in x\}.$ 
	
	By Lemma \ref{propmul}, the canonical representation of $E$ is $$E=P_{\M+\N\cap \St, \N+\M\cap\St, \St+\M\cap\N}.$$ But $\M \cap \N=\mul x$ so that $\St+\M\cap \N=(\St_1+\mul x) \oplus \St_2;$ and $\N+\M\cap\St=\N+\M \cap \St_1.$ Finally, $\M+\N\cap \St=\M \oplus \St_2 \cap \dom x.$ In fact, $z \in \M+\N \cap \St$ if and only if $z=m+u-v,$ where $m \in \M$ and $(u,v) \in x \cap (\St_2 \times \St_1)$ if and only if $z=m'+u,$ where $m' \in \M$ and $u \in \dom x \cap \St_2.$ Finally, $\N=-x$ by definition.
\end{proof}

\begin{thm} Let $E$ be as in \eqref{Ematrixsuper}. Then $E$ is idempotent if and only if $$x(\St_2) \subseteq \St_1+\mul x.$$
\end{thm}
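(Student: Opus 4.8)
The plan is to use the canonical representation of $E$ obtained in Corollary~\ref{corMidem2id} and to decide when $E$ is idempotent by comparing $E$ with $E^2$. Recall that a super-idempotent satisfies $E\subseteq E^2$ always, so $E$ is idempotent precisely when $E^2\subseteq E$, i.e.\ when $E$ is also sub-idempotent. By Corollary~\ref{corMidem2id}, $E$ is super-idempotent with canonical form $E=P_{\M,\N}\ \hat{+}\ (\{0\}\times\St)$, where $\N=-x$ and $\St=\St_1\oplus\St_2$, and with $\ran E=\M+\St$, $\ker E=\N+\M\cap\St$ and $\mul E=\St+\M\cap\N=(\St_1+\mul x)\oplus\St_2$ by Lemma~\ref{propmul}. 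The strategy is to reduce idempotency of $E$ to a condition on how $\St$ and $\ran x$ interact, and to show this condition is exactly $x(\St_2)\subseteq\St_1+\mul x$.

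First I would compute $E^2$ directly from the formula $E=\PMN\ \hat{+}\ (\{0\}\times\St)$. Using the definition of the product of relations, $E^2$ consists of pairs $(u,w)$ for which there is some $v$ with $(u,v)\in E$ and $(v,w)\in E$; writing $(u,v)=(m+n,\,m+s)$ with $m\in\M$, $n\in\N$, $s\in\St$, the second pair requires $v=m+s\in\dom E=\M+\N$, and then $w=P_{\M,\N}v+s'$ for some $s'\in\St$. So the obstruction to $E^2\subseteq E$ comes entirely from the extra multivalued part $\St$ feeding back through the $(1,2)$- and $(2,1)$-blocks of the matrix. The clean way to organise this is to work with the matrix representation \eqref{Ematrixsuper} itself and apply Lemma~\ref{producto} to compute the block matrix for $E^2$ with respect to $\ol{\M}$, keeping careful track of the multivalued parts $\St_1\subseteq\ol{\M}$ and $\St_2\subseteq\M^{\perp}$.

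Carrying out the block multiplication, the $(1,1)$-entry reproduces $I_\M$ (up to multivalued contributions), the $(2,1)$- and $(2,2)$-entries stay in $\{0\}$ plus the appropriate pieces of $\St$, and the decisive term is the $(1,2)$-entry of $E^2$: multiplying the first column $\bigl(\begin{smallmatrix} I_\M \\ 0\ \hat{+}\ (\{0\}\times\St_2)\end{smallmatrix}\bigr)$ against the second row $\bigl(\,0\ \hat{+}\ (\{0\}\times\St_2)\ \ 0\,\bigr)$ produces a contribution $x(\St_2)$ coming from applying $x$ to the multivalued vectors $\St_2\subseteq\M^\perp=\dom$(second block). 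For $E^2$ to be contained in $E$, this newly generated multivalued output $x(\St_2)$ must already lie in the $(1,\cdot)$ multivalued part of $E$, which by the canonical computation is $\St_1+\mul x$. Thus I expect $E^2\subseteq E$ to collapse exactly to the inclusion $x(\St_2)\subseteq\St_1+\mul x$, while the reverse inclusion $E\subseteq E^2$ holds automatically. One then invokes Lemma~\ref{lemalr} to upgrade the inclusion $E^2\subseteq E$ to equality, after checking that $\dom E^2=\dom E$ and $\mul E^2=\mul E$ under the stated condition.

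The main obstacle will be the bookkeeping of multivalued parts when forming $E^2$ as a product of relations: because $x$, $\St_1$ and $\St_2$ all carry their own kernels and multivalued pieces, the naive block multiplication of Lemma~\ref{producto} only gives an inclusion, and one must verify equality of domains and multivalued parts (via Lemmas~\ref{filacolumna} and \ref{filacolumna2}) to pin down $E^2$ exactly rather than up to containment. I would therefore keep the argument at the level of the explicit canonical representation $E=P_{\M,-x}\ \hat{+}\ (\{0\}\times(\St_1\oplus\St_2))$, where the product $E^2$ can be evaluated by tracking a generic element $(u,w)\in E^2$ through an intermediate vector $v$, rather than relying solely on formal matrix algebra; this makes transparent that the only genuinely new feedback term is $x$ applied to $\St_2$, and that the feedback is absorbed by $E$ iff $x(\St_2)\subseteq\St_1+\mul x$.
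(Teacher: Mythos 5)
Your core strategy is sound and, once its computations are filled in, it proves the theorem by a route genuinely different from the paper's. The paper stays with the canonical representation \eqref{ECanonicalR} and quotes \cite[Proposition 4.9]{idem} (a canonical super-idempotent $P_{\M',\N'}\ \hat{+}\ (\{0\}\times\St')$ is idempotent if and only if $(\M'+\N')\cap\St'=\M'\cap\N'$), reducing the theorem to comparing two explicitly computed subspaces. You instead work with the representation $E=P_{\M,-x}\ \hat{+}\ (\{0\}\times(\St_1\oplus\St_2))$ (note: this comes from Proposition \ref{propMidem2}, i.e.\ from the \emph{proof} of Corollary \ref{corMidem2id}; it is not the canonical representation in the sense of Lemma \ref{canonical}) and derive the idempotency criterion from first principles: since $E\subseteq E^2$, idempotency is exactly $E^2\subseteq E$, and the element chase shows, because $\St\subseteq\mul E$, that $E^2\subseteq E$ collapses to $\N\cap(\M+\St)\subseteq\mul E=\St+\M\cap\N$ with $\N=-x$. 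This buys self-containedness (no appeal to \cite{idem}), at the cost of redoing by hand what the citation provides. Be aware, though, that the step you declare ``transparent'' is precisely where the work lies: one still has to prove $\M\cap\N=\mul x$ and $\N\cap(\M+\St)=\{u-v:(u,v)\in x|_{\St_2}\}$, using $\dom x\subseteq\M^{\perp}$, $\ran x\subseteq\M$ and $\ol{\M}\cap\M^{\perp}=\{0\}$; these orthogonality arguments are essentially the same ones the paper performs after invoking \cite{idem}, and your sketch asserts rather than carries them out.

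Two corrections. First, your block-matrix localization of the feedback is wrong: ``first column times second row'' is not one of the two terms in Lemma \ref{producto} (which pairs the left factor's first column with the right factor's first row, and second column with second row), and neither $I_\M$ nor $0\ \hat{+}\ (\{0\}\times\St_2)$ contains $x$, so that product cannot produce $x(\St_2)$. The term that does is the column $\begin{pmatrix} x\ \hat{+}\ (\{0\}\times\St_1)\\ 0\end{pmatrix}$ composed after the row $\begin{pmatrix}0\ \hat{+}\ (\{0\}\times\St_2) & 0\end{pmatrix}$, whose multivalued part $\St_2$ feeds $\dom x$. Since you discard the matrix route in favour of the element chase, the slip is not fatal, but as written it is incorrect. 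Second, your use of Lemma \ref{lemalr} is backwards: once $E^2\subseteq E$ is proved there is nothing to ``upgrade,'' as $E\subseteq E^2$ already gives equality. The profitable way to use Lemma \ref{lemalr} is the opposite one, and it would streamline your whole argument: $E\subseteq E^2$ and $\dom E^2\subseteq\dom E$ hold automatically, so $E$ is idempotent if and only if $\mul E^2\subseteq\mul E$; by Lemma \ref{propbasicas}, $\mul E^2=E(\mul E)$, and computing $E(\mul E)=x(\St_2)+\mul E$ turns the criterion $x(\St_2)\subseteq\St_1+\mul x$ into a one-line consequence of the same orthogonality facts.
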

\begin{proof} By Corollary \ref{corMidem2id}, $E$ admits the canonical representation \eqref{ECanonicalR}. Write $\M':=\M\oplus \St_2 \cap \dom x,$ $\N':= -x+\St_1\cap \M$ and $\St':=(\St_1+\mul x)\oplus \St_2.$ Then, by \cite[Proposition 4.9]{idem}, $E$ is idempotent if and only if $(\M'+\N')\cap \St'=\M' \cap \N'.$ 
	
	Since $\M'+\N'=\M+(-x)=\M\oplus \dom x$ then, using that $\mul x \subseteq \M,$
	\begin{align*}
		(\M'+\N') \cap \St'&=(\M \oplus \dom x) \cap ((\St_1+\mul x)\oplus \St_2) \\
		&=(\M \cap \St_1+\mul x) \oplus \dom x \cap \St_2.
	\end{align*}
	On the other hand, 
	\begin{align*}
		\M' \cap \N'&=(\M \oplus \St_2 \cap \dom x) \cap (-x+\M \cap \St_1)\\
		&=\{u-v : (u,v) \in x|_{\St_2}\} + \M \cap \St_1.
	\end{align*}
	Then  $(\M'+\N')\cap \St'=\M' \cap \N'$ if and only if 
	$$(\M \cap \St_1+\mul x) \oplus \dom x \cap \St_2= \{u-v : (u,v) \in x|_{\St_2}\} + \M \cap \St_1.$$ 
	
	We claim that the last equality holds if and only if $x(\St_2) \subseteq \St_1+\mul x.$ In fact, if the equality holds, then $x(\St_2) \subseteq \M \cap \St_1+\mul x.$ Conversely, if $x(\St_2) \subseteq \St_1+\mul x$ then the inclusion $\supseteq$ is straightforward because $\ran x \subseteq \M.$ To see the opposite inclusion we only need to check that $\dom x \cap \St_2 \subseteq \M'\cap \N',$ since $\mul x \subseteq -x(\St_2)$ trivially. But if $u \in \dom x \cap \St_2$ then there exists $v=v_1+v_2,$ $v_1 \in \St_1 \cap \M$ and $v_2 \in \mul x$ such that $(u,v) \in x.$ Then $(0,v_2) \in x,$ $(u,v_1)=(u,v)-(0,v_2) \in x,$ $u=u-v_1+v_1 \in -x|_{\St_2}+ \M \cap \St_1$ and then the equality holds.
\end{proof}

\section{W-selfadjoint multivalued projections}  \label{fourth}
Denote by $L(\HH)^s$  the set of selfadjoint operators in $L(\HH)$ and  $L(\HH)^+$ the cone of positive semi-definite  bounded linear operators on $\HH.$ Along this section $W\in L(\HH)^s$ and $\St$ is a subspace of $\HH.$

Consider the sesquilinear form
$$\PI{x}{y}_W:=\PI{Wx}{y} \mbox{ for } x, y \in \HH.$$

\begin{Def} The $W$-\emph{orthogonal companion} of $\St$ is the subspace
	$$\St^{\perp_W}:=\{x \in \HH: \PI{x}{s}_W=0 \mbox{ for every } s \in 
	\St\}.$$ 
\end{Def}
It is easy to check that if $W \in L(\HH)^s$ then $\St^{\perp_W}=(W\St)^{\perp}=W^{-1}(\St^{\perp})=(\ol{\St})^{{\perp}_W}.$ In general,
$\St + \St^{\perp_W} \subsetneq \HH$
and the sum may not be direct.

Given $\St$ a subspace of $\HH,$ consider the  multivalued projection 
\begin{equation}\label{pas18}
	\pas:=P_{\St,\St^{\perp_W}}.
\end{equation}

\medskip

The following lemma collects some basic properties of $\pas$.
\begin{lema}\label{propPas} The following properties hold:
	\begin{enumerate}
		\item[1. ] $\dom \pas=\St+(W\St)^\bot$ and $\mul \pas=\St\cap (W\St)^\bot.$ If $W \in L(\HH)^+$ then $\mul \pas=\St \cap \ker W.$
		\item[2. ] $\pas^*=P_{\overline{W\St}, \St^\bot},$ $\ol{\pas}=P_{W, \ol{\St}}.$ Then,  $\pas$ is closed if and only if $\St$ is closed.
		\item[3. ] $\pas$ is $W-$symmetric, i.e., $W\pas\subseteq (W\pas)^*.$
	\end{enumerate}
	
\end{lema}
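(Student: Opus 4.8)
The plan is to verify the three properties of $\pas = P_{\St,\St^{\perp_W}}$ in order, relying mainly on Proposition~\ref{L1}, Proposition~\ref{L2}, and the identity $\St^{\perp_W}=(W\St)^{\perp}=W^{-1}(\St^{\perp})$ already established after the definition of the $W$-orthogonal companion. First, for item~1, I would recall that for any multivalued projection $P_{\M,\N}$ one has $\dom P_{\M,\N}=\M+\N$ and $\mul P_{\M,\N}=\M\cap \N$ (stated right after Proposition~\ref{L1}). Applying this with $\M=\St$ and $\N=\St^{\perp_W}=(W\St)^{\perp}$ gives $\dom \pas=\St+(W\St)^{\perp}$ and $\mul \pas=\St\cap(W\St)^{\perp}$ immediately. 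When $W\in L(\HH)^+$, the remaining claim $\St\cap(W\St)^{\perp}=\St\cap\ker W$ follows because for positive semi-definite $W$ one has $(W\St)^{\perp}=\St^{\perp_W}$ and, on $\St$, the condition $\langle Wx,s\rangle=0$ for all $s\in\St$ forces in particular $\langle Wx,x\rangle=0$, whence $W^{1/2}x=0$ and so $Wx=0$; the reverse inclusion $\St\cap\ker W\subseteq \St\cap(W\St)^{\perp}$ is trivial.

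For item~2, I would apply Proposition~\ref{L2}, which gives $P_{\M,\N}^*=P_{\N^{\perp},\M^{\perp}}$ and $\ol{P_{\M,\N}}=P_{\ol\M,\ol\N}$. Taking $\M=\St$ and $\N=(W\St)^{\perp}$, the adjoint formula yields $\pas^*=P_{((W\St)^{\perp})^{\perp},\,\St^{\perp}}=P_{\ol{W\St},\,\St^{\perp}}$, using $((W\St)^{\perp})^{\perp}=\ol{W\St}$. For the closure, $\ol{\pas}=P_{\ol{\St},\,\ol{(W\St)^{\perp}}}=P_{\ol{\St},\,(W\St)^{\perp}}$ since $(W\St)^{\perp}$ is already closed; here I would note that $(W\St)^{\perp}=(W\ol{\St})^{\perp}=(\ol{\St})^{\perp_W}$, so this equals $P_{W,\ol{\St}}$ in the paper's notation. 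The equivalence ``$\pas$ closed $\iff$ $\St$ closed'' then follows from the last sentence of Proposition~\ref{L2}, since the second subspace $(W\St)^{\perp}$ is automatically closed and so closedness of $\pas$ is controlled solely by closedness of $\St$.

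For item~3, the goal is $W\pas\subseteq(W\pas)^*$. The cleanest route is to unwind $W$-symmetry directly from the definition of the adjoint of a linear relation. I would take an arbitrary pair $(u,Wv)\in W\pas$, coming from $(u,v)\in\pas$, and show it satisfies the defining condition of $(W\pas)^*$ against every $(f,Wg)\in W\pas$ with $(f,g)\in\pas$; concretely this reduces to checking $\langle Wg,u\rangle=\langle Wv,f\rangle$, i.e.\ $\langle u,g\rangle_W=\langle v,f\rangle_W$ in the $W$-form notation. Using Proposition~\ref{L1}, any $(u,v)\in\pas$ decomposes as $u=v+n$ with $v\in\St$ and $n\in\St^{\perp_W}$, and similarly $f=g+m$ with $g\in\St$, $m\in\St^{\perp_W}$. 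Then $\langle u,g\rangle_W=\langle v+n,g\rangle_W=\langle v,g\rangle_W$ since $n\in\St^{\perp_W}$ and $g\in\St$, and symmetrically $\langle v,f\rangle_W=\langle v,g+m\rangle_W=\langle v,g\rangle_W$ because $W$ is selfadjoint so that $\langle v,m\rangle_W=\langle m,v\rangle_W=0$; the two expressions agree, giving the inclusion.

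The main obstacle I anticipate is item~3: one must be careful that the $W$-form is only a \emph{semi}-definite sesquilinear form, so $W$-symmetry is the inclusion $W\pas\subseteq(W\pas)^*$ rather than an equality, and the bookkeeping between the relation $\pas$, the operator $W$ acting on the second coordinate, and the adjoint of the composite relation $W\pas$ must be handled via Lemma~\ref{lemalr} and the adjoint definition rather than by naive operator manipulations. Everything else (items~1 and~2) is a direct substitution into the general multivalued-projection formulas together with the standard identities for $\St^{\perp_W}$.
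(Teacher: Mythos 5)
Your proposal is correct, and for items 1 and 2 it coincides with the paper: both are read off from Propositions \ref{L1} and \ref{L2}, and your argument that $\mul \pas=\St\cap\ker W$ for $W\in L(\HH)^+$ (namely $\PI{Wx}{x}=0$ forces $W^{1/2}x=0$) is the same computation the paper phrases as $W^{1/2}x\in W^{1/2}\St\cap(W^{1/2}\St)^{\perp}=\{0\}$. Where you genuinely diverge is item 3. The paper writes $W\pas=\{(s+n,Ws): s\in\St,\ n\in(W\St)^{\perp}\}$ and shows each such pair lies in $\pas^*W$, using the formula $\pas^*=P_{\ol{W\St},\St^{\perp}}$ from item 2 (so $Ws\in\ran\pas^*$ and $Wn\in\St^{\perp}=\ker\pas^*$), and then concludes via the identity $\pas^*W=(W\pas)^*$, which holds by Lemma \ref{productolema} because $W$ is bounded. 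You instead verify membership in $(W\pas)^*$ directly from the definition of the adjoint of a relation, decomposing $u=v+n$ and $f=g+m$ via Proposition \ref{L1} and killing the cross terms by $W$-orthogonality and selfadjointness of $W$. Your route is more elementary and self-contained: it needs neither the formula for $\pas^*$ nor Lemma \ref{productolema}; what it gives up is the intermediate inclusion $W\pas\subseteq \pas^*W$, which the paper's argument exhibits explicitly and which is exactly the form reused later (e.g., in Proposition \ref{propScomp}). One bookkeeping caveat in your write-up: by the paper's definition of the adjoint, the condition to check is $\PI{Wg}{u}=\PI{f}{Wv}$, not $\PI{Wg}{u}=\PI{Wv}{f}$, and likewise $\PI{v}{m}_W=\ol{\PI{m}{v}_W}$ rather than $\PI{m}{v}_W$; these conjugation slips are harmless here because your computation shows both sides of the pairing reduce to the same quantity $\PI{g}{v}_W$ (up to conjugation), the cross terms $\PI{n}{g}_W$ and $\PI{v}{m}_W$ vanishing in either form, so the inclusion $W\pas\subseteq(W\pas)^*$ does follow.
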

\begin{proof} The first part of item $1$ follows from Proposition \ref{L1}. If $W \in L(\HH)^+$ and $x \in \St\cap (W\St)^\bot$ then $W^{1/2}x \in W^{1/2} \St\cap (W^{1/2}\St)^\bot=\{0\}$ and $x \in \St \cap \ker W.$ The other inclusion always holds.
	
	Item $2$ follows from Proposition \ref{L2}.
	
	$3$: Since $W\pas=\{(s+n,Ws): s\in\St, n\in (W\St)^\bot\},$ if $(s+n,Ws)\in W\pas$ then, by item $2,$ $Ws \in \ran \pas^*$ and $Wn \in W(W^{-1}(\St^{\perp})) \subseteq \St^{\perp}=\ker \pas^*,$ so that $(W(s+n),Ws) \in \pas^*.$ Therefore, $(s+n,Ws)\in \pas^* W=(W\pas)^*.$
\end{proof}

\begin{Example} Let $W$ be a selfadjoint symmetry, so that  $W=W^*=W^{-1},$ and consider the indefinite inner product  space $\K{x}{y}:=\PI{Wx}{y}.$ Then $(\HH, \K{ \cdot }{ \cdot })$ is a {\emph{Krein space}} \cite{Azizov}. Given a closed subspace $\St,$ if $\St^{[\perp]}:=\St^{{\perp}_W},$ 
	the \emph{isotropic part} of $\St$ is $\St^{\circ}:=\St \cap \St^{[\perp]}.$ Then $\St$ is \emph{non-degenerate} if $\St^{\circ}=\{0\},$ or equivalently $\ol{\St \ [\dotplus] \ \St^{[\perp]}}=\HH,$ where $[+]$ stands for the $[ \cdot, \cdot ]$-orthogonal sum and we write $[\dotplus]$ when it is direct. $\St$ is \emph{pseudo-regular} if $\St \ [+] \ \St^{[\perp]}$ is closed and $\St$ is \emph{regular} if $\St \ [\dotplus] \ \St^{[\perp]}=\HH.$ Regular subspaces are non-degenerate, but pseudo-regular subspaces can be degenerate. 
	It is straighforward to establish the following correspondence:
	\begin{itemize}
		\item[-] $\St$ is non-degenerate if and only if $P_{W,\St}$ is a projection.
		\item[-]  $\St$ is pseudo-regular if and only if $P_{W,\St}$ is closed with closed domain. 
		\item[-]  $\St$ is regular if and only if $P_{W, \St} \in L(\HH).$
	\end{itemize}
	
	In this example we see that, even if the weight is invertible, we can have that $P_{W,\St}$ is not a projection, for example when $\St$ is pseudo-regular and degenerate.
\end{Example}

\subsection{Complementability and Schur complement} \label{fourth1}

The notion of com\-ple\-men\-ta\-bi\-li\-ty of an operator $W \in L(\HH)$ with respect to two given closed subspaces $\St$ and $\T$ of $\HH$ was studied for matrices by Ando \cite{AndoC} and extended to operators in Hilbert spaces by Carlson and Haynsworth \cite{Carlson}, by Corach et al.  \cite{Szeged} and Antezana et al. \cite{bilateral}. We use these ideas for $\St = \T$ and $W \in L(\HH)^s.$ Complementability and quasicomplementability of $W$ with respect to $\St$ \cite{Quasi, Szeged} in terms of $\pas$ read as follows.

\begin{Def} We say that $W$ is $\St-$\emph{quasicomplementable} if $\cldom \pas=\HH$ and is $\St-$\emph{complementable} if $\dom \pas=\HH.$
\end{Def}

\begin{lema} \label{lemaWS} If $W$ is $\St$-quasicomplementable then $$\mul \pas=\St\cap \ker W.$$
\end{lema}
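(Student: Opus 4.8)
The plan is to prove the two inclusions $\mul \pas \subseteq \St \cap \ker W$ and $\St \cap \ker W \subseteq \mul \pas$ separately, observing that only the first uses quasicomplementability. By item 1 of Lemma \ref{propPas} we already have $\mul \pas = \St \cap (W\St)^\perp = \St \cap \St^{\perp_W}$, so the whole task reduces to showing $\St \cap (W\St)^\perp = \St \cap \ker W$. The inclusion $\St \cap \ker W \subseteq \mul \pas$ holds for every selfadjoint $W$: if $x \in \St$ and $Wx = 0$, then trivially $Wx = 0 \in \St^\perp$, which is exactly the condition $x \in (W\St)^\perp = \St^{\perp_W}$; hence $x \in \St \cap \St^{\perp_W} = \mul \pas$. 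No hypothesis beyond selfadjointness is needed here.

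The substance is the reverse inclusion $\mul \pas \subseteq \St \cap \ker W$, and this is where quasicomplementability enters. The key step I would take is to rephrase the hypothesis $\cldom \pas = \HH$ as an intersection condition via orthogonal complements. Since $\dom \pas = \St + (W\St)^\perp$, taking complements gives
$$(\dom \pas)^\perp = \St^\perp \cap \big((W\St)^\perp\big)^\perp = \St^\perp \cap \ol{W\St},$$
so that $W$ being $\St$-quasicomplementable is equivalent to $\St^\perp \cap \ol{W\St} = \{0\}$. Now I would take an arbitrary $x \in \mul \pas = \St \cap (W\St)^\perp$ and locate $Wx$ in both subspaces appearing in this intersection: on one hand $x \in \St$ forces $Wx \in W\St \subseteq \ol{W\St}$; on the other hand $x \in (W\St)^\perp$ means precisely $\langle Wx, s\rangle = 0$ for all $s \in \St$ (using that $W$ is selfadjoint), i.e. $Wx \in \St^\perp$. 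Therefore $Wx \in \St^\perp \cap \ol{W\St} = \{0\}$, so $x \in \ker W$, and together with $x \in \St$ this gives $x \in \St \cap \ker W$.

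The step I expect to require the most care is the identity $\big((W\St)^\perp\big)^\perp = \ol{W\St}$, since this is exactly where the closure appears and where the assumption — a statement about $\cldom \pas$ rather than $\dom \pas$ — is precisely what is needed: the argument only produces $Wx$ inside the generally non-closed subspace $W\St$, so it is essential that the quasicomplementability condition lives at the level of closures. I would also note that, unlike the positive case recorded in Lemma \ref{propPas}, no positivity of $W$ is used anywhere in this argument, which is the whole point of the lemma.
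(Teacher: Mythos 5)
Your proof is correct and follows essentially the same route as the paper: both arguments reformulate quasicomplementability as $\St^{\perp}\cap\ol{W\St}=\{0\}$ by taking orthogonal complements of $\dom \pas=\St+(W\St)^{\perp}$, and then place $Wx$ in both $\ol{W\St}$ and $\St^{\perp}$ for $x\in\mul\pas$ to conclude $Wx=0$. The only cosmetic difference is that the paper writes $\St^{\perp_W}$ as $W^{-1}(\St^{\perp})$ where you write $(W\St)^{\perp}$, which are equal since $W$ is selfadjoint.
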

\begin{proof} It always holds that $\St \cap \ker W \subseteq  \St \cap W^{-1}(\St^{\perp}) =\mul \pas.$ On the other hand, since $\HH=\cldom \pas$ it follows that $\{0\}=\St^{\perp} \cap \ol{W\St}.$ Let $x \in \St \cap W^{-1}(\St^{\perp})$ then $Wx \in W\St \cap \St^{\perp} \subseteq \ol{W\St} \cap \St^{\perp}= \{0\}$ and $x \in \St \cap \ker W.$
\end{proof}

\begin{obs} \begin{enumerate}
		\item[1.] If $W$ is $\St$-complementable then $\mul \pas \subseteq \ker W$ and $W\pas$ is an operator.  If $W\in L(\HH)^+$ then, by Lemma \ref{propPas}, $W^{1/2}\pas$ is always an operator. 
		\item [2.] If $W$ is $\St$-complementable then $W$ is $\ol{\St}$-complementable. If in addition $\ker W=\{0\}$ then $\HH=\St \dotplus W^{-1}(\St^\perp) =\ol{\St} \dotplus W^{-1}(\St^\perp).$ Hence
		$\St=\ol{\St}$ and $\pas$ is an operator. 
	\end{enumerate}
\end{obs}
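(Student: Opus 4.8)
The plan is to handle the two items separately, relying throughout on the identities $\dom\pas=\St+(W\St)^\perp$ and $\mul\pas=\St\cap(W\St)^\perp$ from Proposition~\ref{L1}, together with Lemma~\ref{lemaWS}.

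For item~1, first note that $\St$-complementability ($\dom\pas=\HH$) forces $\cldom\pas=\HH$, so $W$ is $\St$-quasicomplementable and Lemma~\ref{lemaWS} gives $\mul\pas=\St\cap\ker W\subseteq\ker W$. To see that $W\pas$ is an operator it suffices to check $\mul(W\pas)=\{0\}$; by item~2 of Lemma~\ref{propbasicas}, $\mul(W\pas)=W(\mul\pas)$, and since $\mul\pas\subseteq\ker W$ this set is $\{0\}$. For the positive case, Lemma~\ref{propPas} already yields $\mul\pas=\St\cap\ker W$ with no hypothesis on complementability; combining $\mul(W^{1/2}\pas)=W^{1/2}(\mul\pas)$ with the standard identity $\ker W^{1/2}=\ker W$ (valid for $W\in L(\HH)^+$) shows $\mul(W^{1/2}\pas)=\{0\}$, so $W^{1/2}\pas$ is always an operator, as claimed.

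For item~2, the key observation is $(W\ol{\St})^\perp=(W\St)^\perp$, which is exactly the relation $\St^{\perp_W}=(\ol{\St})^{\perp_W}$ recorded in the text (it holds because $W$ is bounded, whence $\ol{W\St}=\ol{W\ol{\St}}$). Consequently $\dom P_{W,\ol{\St}}=\ol{\St}+(W\St)^\perp\supseteq\St+(W\St)^\perp=\HH$, so $W$ is $\ol{\St}$-complementable. Assume now $\ker W=\{0\}$. Applying item~1 (equivalently Lemma~\ref{lemaWS}) to $\St$ and to $\ol{\St}$, both $\mul\pas$ and $\mul P_{W,\ol{\St}}$ are contained in $\ker W=\{0\}$, so the sums $\HH=\St+W^{-1}(\St^\perp)$ and $\HH=\ol{\St}+W^{-1}(\St^\perp)$ are both direct, i.e. $\HH=\St\dotplus W^{-1}(\St^\perp)=\ol{\St}\dotplus W^{-1}(\St^\perp)$.

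Finally I would deduce $\St=\ol{\St}$ from these two direct sums sharing the same complement $\Z:=W^{-1}(\St^\perp)$. Given $x\in\ol{\St}$, write $x=s+z$ with $s\in\St$ and $z\in\Z$ using the first decomposition; then $z=x-s\in\ol{\St}\cap\Z=\{0\}$ by the directness of the second, whence $x=s\in\St$. Thus $\ol{\St}\subseteq\St$ and $\St=\ol{\St}$, and since $\mul\pas=\{0\}$ the relation $\pas$ is an operator. The only steps needing genuine care are the interchange $(W\ol{\St})^\perp=(W\St)^\perp$ and this last short argument identifying two direct complements of the same subspace; everything else is bookkeeping with the domain and multivalued-part formulas.
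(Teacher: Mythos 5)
Your proof is correct and follows essentially the same route the paper intends: the remark is left unproved precisely because it is a direct consequence of Lemma \ref{lemaWS}, Lemma \ref{propPas} and the identity $\St^{\perp_W}=(\ol{\St})^{\perp_W}$ recorded in the text, which are exactly the ingredients you invoke (together with $\mul(W\pas)=W(\mul \pas)$ from Lemma \ref{propbasicas}). All the steps needing care --- the interchange $(W\ol{\St})^{\perp}=(W\St)^{\perp}$, the directness of both sums when $\ker W=\{0\}$, and the short argument identifying $\St$ with $\ol{\St}$ via the common complement $W^{-1}(\St^{\perp})$ --- are carried out correctly.
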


\begin{prop} \label{bounded} The following statements hold:
	\begin{enumerate}
		\item[1. ] $W$ is $\St$-quasicomplementable if and only if $\pas^*$ is a projection.
		\item[2. ] If $W$ is $\St$-complementable then $\pas^*$ is a bounded projection. If $\St$ is closed the converse follows.
	\end{enumerate}
\end{prop}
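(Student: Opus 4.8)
The plan is to read everything off from the two descriptions of the relation at hand: the definition $\pas=P_{\St,(W\St)^{\perp}}$ together with the adjoint formula $\pas^*=P_{\overline{W\St},\,\St^{\perp}}$ furnished by Lemma \ref{propPas}, and then to move between the two by taking orthogonal complements and invoking Deutsch's theorem (Proposition \ref{Deutch}). I will use repeatedly that for a (closed) multivalued projection $P_{\M,\N}$ one has $\dom P_{\M,\N}=\M+\N$ and $\mul P_{\M,\N}=\M\cap\N$ (the formula following Proposition \ref{L1}), that $\pas^*$ is a multivalued projection which is a \emph{projection} exactly when $\mul\pas^*=\{0\}$, and that, being a closed relation, $\pas^*$ is a \emph{bounded} projection precisely when, in addition, its domain is closed (closed graph theorem).

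For item $i)$ I would simply compute $\cldom\pas=\ol{\St+(W\St)^{\perp}}$ and pass to the orthogonal complement. Since $\big(\St+(W\St)^{\perp}\big)^{\perp}=\St^{\perp}\cap\overline{W\St}=\mul\pas^*$, the condition $\cldom\pas=\HH$ is equivalent to $\mul\pas^*=\{0\}$, that is, to $\pas^*$ being a projection. This disposes of the first assertion.

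For the first part of item $ii)$ I would assume $W$ is $\St$-complementable, i.e. $\dom\pas=\St+(W\St)^{\perp}=\HH$. Complementability forces quasicomplementability, so by $i)$ the relation $\pas^*$ is already a projection, and it only remains to check that its domain $\dom\pas^*=\overline{W\St}+\St^{\perp}$ is closed. From $\St+(W\St)^{\perp}=\HH$ one gets $\overline{\St}+(W\St)^{\perp}=\HH$, a closed subspace; applying Proposition \ref{Deutch} to the closed subspaces $\overline{\St}$ and $(W\St)^{\perp}$ then shows that $\St^{\perp}+\overline{W\St}=(\overline{\St})^{\perp}+\big((W\St)^{\perp}\big)^{\perp}$ is closed. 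Hence $\pas^*$ is a projection with closed domain, i.e. a bounded projection. For the converse I would assume $\St$ closed and $\pas^*$ a bounded projection: being a projection, $i)$ gives $\cldom\pas=\HH$; being bounded, its domain $\overline{W\St}+\St^{\perp}$ is closed, and Proposition \ref{Deutch} yields that $\overline{\St}+(W\St)^{\perp}=\St+(W\St)^{\perp}$ is closed (here $\overline{\St}=\St$ is used). Combining closedness with $\cldom\pas=\HH$ gives $\dom\pas=\St+(W\St)^{\perp}=\HH$, so $W$ is $\St$-complementable.

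The steps are short, and the only delicate points are bookkeeping ones. First, one must work throughout with the \emph{closed} subspaces $\overline{\St}$ and $(W\St)^{\perp}=(W\overline{\St})^{\perp}$ when invoking Deutsch's theorem, and it is exactly the hypothesis $\overline{\St}=\St$ that is needed in the converse to descend from the closedness of $\overline{\St}+(W\St)^{\perp}$ to that of $\St+(W\St)^{\perp}$. Second, and more importantly, one must read ``bounded projection'' as ``projection with closed domain'' rather than as an everywhere-defined operator in $L(\HH)$: complementability does not make $\dom\pas^*=\overline{W\St}+\St^{\perp}$ equal to $\HH$ (for instance $W=\mathrm{diag}(1,0)$ with $\St$ the kernel line gives $\pas^*=P_{\{0\},\St^{\perp}}$, a bounded projection with proper domain), so the correct notion of boundedness for the closed relation $\pas^*$ is precisely the closedness of its domain, which is what the above argument produces.
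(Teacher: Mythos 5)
Your proof is correct and takes essentially the same route as the paper's: item 1 by passing to orthogonal complements (so that $\bigl(\St+(W\St)^{\perp}\bigr)^{\perp}=\St^{\perp}\cap\ol{W\St}=\mul \pas^*$), and item 2 by combining Proposition \ref{Deutch} with the closed graph theorem in the forward direction, and, for the converse, closedness of $\dom\pas^*$ plus Deutsch plus quasicomplementability when $\St$ is closed. Your explicit observation that ``bounded projection'' must be read as a closed projection with closed domain (bounded on that domain), rather than an everywhere-defined operator in $L(\HH)$, is precisely the interpretation the paper's closed-graph-theorem argument relies on, and your counterexample confirms it.
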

\begin{proof}
	$1$: follows from the fact that $\St^\bot\cap \overline{W\St}=\{0\}$ if and only if $\overline{\St+(W\St)^\bot}=\HH.$  
	
	$2$: If $\St+(W\St)^\bot=\HH$ then $\ol{\St}+(W\St)^\bot=\HH$ and $\St^\bot \dot{+}\overline{W\St}$ is closed (see Proposition \ref{Deutch}). Then $\pas^*$ is a closed operator with closed domain and then, by the closed graph theorem, $\pas^*$ is a bounded operator. Finally, suppose that $\St$ is closed and $\pas^*$ is a bounded projector. Then $\dom \pas^* =\St^\bot \dot{+}\overline{W\St}$ is closed. Hence, $\HH=\St+(W\St)^\bot.$
\end{proof}

\begin{prop} \label{propScomp} The following are equivalent:
	\begin{enumerate}
		\item $W$ is $\St-$complementable;
		\item $W\pas=\pas^*W$ and $\ran W \subseteq \dom \pas^*;$
		\item There exists a $W$-selfadjoint $Q \in \Sp(\HH)$ with $\ran Q=\St$ and $\dom Q=\HH.$
	\end{enumerate}
\end{prop}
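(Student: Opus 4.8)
The plan is to establish the cycle $i) \Rightarrow ii) \Rightarrow iii) \Rightarrow i)$. The recurring tools will be the description $\pas = P_{\St,\St^{\perp_W}}$ with $\St^{\perp_W}=(W\St)^\perp$, the identities $\dom\pas = \St + (W\St)^\perp$ and $\pas^* = P_{\ol{W\St},\St^\perp}$ from Lemma \ref{propPas}, and the $W$-symmetry $W\pas \subseteq (W\pas)^*$. Since $W \in L(\HH)$, Lemma \ref{productolema} gives the identity $(W\pas)^* = \pas^* W$, so $W$-symmetry reads $W\pas \subseteq \pas^* W$; this inclusion is the backbone of the argument.

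For $i) \Rightarrow ii)$, I assume $\St + (W\St)^\perp = \HH$. Applying $W$ to this identity yields $\ran W = W\St + W(W^{-1}(\St^\perp)) \subseteq \ol{W\St}+\St^\perp = \dom\pas^*$, giving the second assertion of $ii)$. To promote $W\pas \subseteq \pas^* W$ to an equality I will invoke Lemma \ref{lemalr}, comparing domains and multivalued parts. As $W$ is everywhere defined, $\dom(W\pas)=\dom\pas=\HH$, while $\ran W \subseteq \dom\pas^*$ forces $\dom(\pas^* W)=\HH$ too. For the multivalued parts, $\mul(\pas^* W)=\pas^*(\mul W)=\mul\pas^* = \ol{W\St}\cap\St^\perp$ and $\mul(W\pas)=W(\mul\pas)$; complementability implies quasicomplementability, so $\ol{W\St}\cap\St^\perp=\{0\}$ by Proposition \ref{bounded} and $\mul\pas = \St\cap\ker W \subseteq \ker W$ by Lemma \ref{lemaWS}, whence both multivalued parts vanish. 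Lemma \ref{lemalr} then yields $W\pas=\pas^* W$.

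For $ii) \Rightarrow iii)$, the key remark is that $ii)$ already forces $\dom\pas=\HH$: the hypothesis $\ran W \subseteq \dom\pas^*$ gives $\dom(\pas^* W)=\HH$, and combining $W\pas=\pas^* W$ with $\dom(W\pas)=\dom\pas$ yields $\dom\pas=\HH$. One then simply takes $Q:=\pas \in \Sp(\HH)$, which has $\ran Q=\St$ and $\dom Q=\HH$ and is $W$-selfadjoint since $WQ = W\pas = \pas^* W = (W\pas)^* = (WQ)^*$. For $iii) \Rightarrow i)$, given a $W$-selfadjoint $Q \in \Sp(\HH)$ with $\ran Q=\St$ and $\dom Q=\HH$, I show $\ker Q \subseteq \St^{\perp_W}$: if $n\in\ker Q$ then $(n,0)\in Q$, hence $(n,0)\in WQ = (WQ)^* = Q^* W$, which forces $Wn \in \ker Q^* = (\ran Q)^\perp = \St^\perp$, i.e.\ $n \in W^{-1}(\St^\perp)=\St^{\perp_W}$. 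Since $\dom Q = \ran Q + \ker Q$ (Proposition \ref{L1}), we get $\HH = \dom Q \subseteq \St + \St^{\perp_W}=\dom\pas$, so $W$ is $\St$-complementable.

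The main obstacle is the equality $W\pas=\pas^* W$ in $i)\Rightarrow ii)$. The inclusion coming from $W$-symmetry is automatic, but upgrading it to equality hinges on the bookkeeping of Lemma \ref{lemalr}, and precisely on the two vanishing facts $\ol{W\St}\cap\St^\perp=\{0\}$ and $\mul\pas\subseteq\ker W$ — the exact points at which the complementability hypothesis is consumed. The other two implications are comparatively formal once one notices that condition $ii)$ secretly encodes $\dom\pas=\HH$.
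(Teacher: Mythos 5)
Your proof is correct and follows essentially the same route as the paper's: the cycle $i)\Rightarrow ii)\Rightarrow iii)\Rightarrow i)$, upgrading the $W$-symmetry inclusion $W\pas\subseteq\pas^*W$ to equality via Lemma \ref{lemalr} after checking that $\mul(\pas^*W)=\mul\pas^*=\{0\}$, taking $Q=\pas$ for $iii)$, and the identical kernel argument $(n,0)\in WQ=Q^*W\Rightarrow Wn\in\ker Q^*=\St^\perp$ for $iii)\Rightarrow i)$. The only (harmless) deviation is that you establish $\ran W\subseteq\dom\pas^*$ directly by applying $W$ to $\HH=\St+(W\St)^\perp$, whereas the paper reads it off afterwards from $\dom(\pas^*W)=\HH$ once the equality $W\pas=\pas^*W$ is in hand.
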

\begin{proof}$i)\Rightarrow ii)$: By Lemma \ref{propPas}, $W\pas\subseteq \pas^* W.$ If $W$ is $\St$-complementable then $\dom W\pas=\HH.$ Also, by Proposition \ref{bounded}, $\pas^*$ is a projection.  Then $\mul \pas^* W=\{0\}$ and equality follows by Lemma \ref{lemalr}. 
	
	Finally, from $\dom \pas^* W=\HH$ it follows that $\ran W \subseteq \dom \pas^*.$
	
	$ii)\Rightarrow iii)$: Since $\ran W \subseteq \dom \pas^*,$ we get that $\dom W\pas=\HH=\dom \pas.$ Taking $Q=\pas,$ $iii)$ follows.
	
	$iii)\Rightarrow i)$: Let $Q=P_{\St,\N}$ with $\dom Q=\HH=\St+\N$ and $WQ=Q^*W.$ We claim that $\N\subseteq (W\St)^\bot. $ In fact, if $n\in \N$ then $(n,0)\in WQ=Q^*W.$ Thus, $(Wn,0)\in Q^*$, or $Wn \in \ker Q^*=\St^{\perp},$ so that $n\in W^{-1}(\St^\bot)=(W\St)^\bot.$ Therefore, $\HH=\St+(W\St)^\bot.$
\end{proof}

\begin{obs} By the discussion in the proof of the above proposition,  if $W$ is $\St$-complementable then $\pas$ is the maximal multivalued projection with range $\St$ which is $W$-selfadjoint, that is, if $Q=P_{\St,\N}$ and $WQ=Q^*W$ then $Q\subseteq \pas.$
	
	In \cite{Szeged} several characterizations of complementability in terms of $W$- selfadjoint projections in $L(\HH)$ are studied. Unlike the multivalued projection $\pas,$ which always exists, these projections exist if and only if $W$ is $\St$-complementable, and by definition, they are bounded operators acting on $\HH.$ In fact, when $W$ is $\St$-complementable, all of these $W$-selfadjoint projections onto $\St$ are included in $\pas$ and both concepts coincide only when $W$ is invertible. 
\end{obs}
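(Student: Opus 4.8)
The plan is to reduce the maximality claim to a single inclusion of kernels, exactly as the phrase ``by the discussion in the proof of the above proposition'' suggests. Fix a $W$-selfadjoint multivalued projection $Q=P_{\St,\N}$ with $\ran Q=\St$, so that by hypothesis $WQ=Q^*W$. Since $\pas=P_{\St,\St^{\perp_W}}$ also has range $\St$, the whole statement follows once I show the kernel inclusion $\N\subseteq\St^{\perp_W}$; the containment $Q\subseteq\pas$ is then immediate. I would also record at the outset that, under complementability, Proposition \ref{propScomp} $(i)\Rightarrow(ii)$ gives $W\pas=\pas^*W$, so that $\pas$ itself is $W$-selfadjoint and the word ``maximal'' is justified: $\pas$ is one of the relations being compared, and it will dominate all the others.

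The heart of the argument is the kernel inclusion, and it is precisely the computation performed in the step $iii)\Rightarrow i)$ of Proposition \ref{propScomp}, which I would reproduce here; notably, it does not require $\dom Q=\HH$. Take $n\in\N$. By Proposition \ref{L1} we have $\N\times\{0\}\subseteq Q$, hence $(n,0)\in Q$, and since $W$ is an operator with $W\cdot 0=0$ this gives $(n,0)\in WQ$. Using $WQ=Q^*W$ we obtain $(n,0)\in Q^*W$, which by the definition of the product of relations means $(Wn,0)\in Q^*$. By Proposition \ref{L2}, $Q^*=P_{\N^\perp,\St^\perp}$, so $\ker Q^*=\St^\perp$ and therefore $Wn\in\St^\perp$. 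Hence $n\in W^{-1}(\St^\perp)=(W\St)^\perp=\St^{\perp_W}$, proving $\N\subseteq\St^{\perp_W}$.

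To finish, I would invoke the explicit description from Proposition \ref{L1}: $Q=I_\St\ \hat{+}\ (\N\times\{0\})$ and $\pas=I_\St\ \hat{+}\ (\St^{\perp_W}\times\{0\})$. Since $\N\subseteq\St^{\perp_W}$ forces $\N\times\{0\}\subseteq\St^{\perp_W}\times\{0\}$, comparing the two componentwise sums yields $Q\subseteq\pas$ at once. Combined with the first paragraph, this shows that $\pas$ is $W$-selfadjoint and contains every $W$-selfadjoint multivalued projection with range $\St$, which is the asserted maximality.

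I do not expect a genuine obstacle here: the argument is short and is essentially a transcription of the earlier proof. The only point demanding care is the bookkeeping with products of relations when passing from $(n,0)\in Q$ to $(Wn,0)\in Q^*$, where one must apply the definitions of $WQ$, of $Q^*W$, and of $Q^*$ from the preliminaries in the correct order; the fact that $W$ is a bounded, everywhere-defined operator keeps these manipulations clean.
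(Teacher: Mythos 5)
Your proof is correct and takes essentially the same route as the paper: the remark's phrase ``by the discussion in the proof of the above proposition'' refers precisely to the $iii)\Rightarrow i)$ computation in Proposition \ref{propScomp}, which you reproduce to obtain $\N\subseteq W^{-1}(\St^{\perp})=(W\St)^{\perp}=\St^{\perp_W}$ and hence, via Proposition \ref{L1}, $Q=I_\St\ \hat{+}\ (\N\times\{0\})\subseteq I_\St\ \hat{+}\ (\St^{\perp_W}\times\{0\})=\pas$. You also correctly isolate the two roles of the hypotheses: the kernel inclusion never uses $\dom Q=\HH$, while $\St$-complementability enters only through $i)\Rightarrow ii)$ of Proposition \ref{propScomp} to guarantee that $\pas$ itself is $W$-selfadjoint, so that ``maximal'' is meaningful.
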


To get the matrix representation of $\pas$ when $\St$ is closed and $W$ is $\St$-complementable, consider the matrix representation of $W$
\begin{equation} \label{WMR}
	W=\matriz{a}{b}{b^*}{c}_{\St}\in L(\HH)^s.
\end{equation}

By Proposition \ref{repSpMcerrado},
\begin{equation}\label{pasRM}
	\pas=\matriz{I}{x}{0}{0}_{\St},
\end{equation}
where $x=-P_\St (P_{\St^{\perp}}|_{(W\St)^\bot})^{-1}.$

Since $\ker a = \ker (P_\St W|_\St)=\St \cap \ker P_\St W=\St \cap W^{-1}(\St^{\perp})=\mul \pas=\mul x,$ then
\begin{equation} \label{mulx}
	\mul x = \ker a.
\end{equation}

\begin{prop} \label{propScomp2} Let $\St$ be a closed subspace of $\HH,$ and $W$ and $\pas$ with matrix representations \eqref{WMR} and \eqref{pasRM}, respectively.
	Then  $W$ is $\St$-complementable if and only if $ax=b.$
	
	In this case, \begin{equation} \label{pascomp}
		\pas=\matriz{I}{a^{-1}b}{0}{0}_{\St}.
	\end{equation}
\end{prop}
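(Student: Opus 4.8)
The statement to prove is Proposition~\ref{propScomp2}: with $\St$ closed, $W=\matriz{a}{b}{b^*}{c}_{\St}$ and $\pas=\matriz{I}{x}{0}{0}_{\St}$, the operator $W$ is $\St$-complementable if and only if $ax=b$, and in that case $\pas=\matriz{I}{a^{-1}b}{0}{0}_{\St}$.

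The plan is to characterize $\St$-complementability through the defining condition $\dom\pas=\HH$, equivalently (since $\dom\pas=\St+(W\St)^\perp$) through $(W\St)^\perp$ projecting onto all of $\St^\perp$. Using the matrix representation \eqref{pasRM}, the key link is that $\dom\pas=\HH$ is equivalent to $\dom x=\St^\perp$, because $\dom\pas=\St\oplus\dom x$ in the decomposition $\HH=\St\oplus\St^\perp$. So the first step is to reduce the complementability condition to the statement $\dom x=\St^\perp$.

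Next I would compute $\dom x$ directly. Since $x=-P_\St(P_{\St^\perp}|_{(W\St)^\perp})^{-1}$, we have $\dom x=P_{\St^\perp}((W\St)^\perp)$. The central computation is to relate this to the block $a$: I would show that $ax=b$ is forced by the fundamental relation $W\pas\subseteq\pas^*W$ (Lemma~\ref{propPas}, item~$3$) combined with Lemma~\ref{suma} for adding/comparing block matrices. Concretely, computing the product $W\pas$ using Lemma~\ref{producto} yields a block matrix whose $(1,1)$ and $(2,1)$ entries involve $a$ and $b^*$, while the $(1,2)$ entry is governed by $ax$ versus $b$; the inclusion $ax\subseteq b$ always holds, and complementability is exactly what upgrades this to equality $ax=b$. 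I would use \eqref{mulx}, namely $\mul x=\ker a$, to control multivalued parts when invoking Lemma~\ref{lemalr}.

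For the formula \eqref{pascomp}, once $ax=b$ is established I would solve for $x$ as a relation: $x=a^{-1}b$ where $a^{-1}$ is the relational inverse. The consistency check is that $\mul(a^{-1}b)=\ker a=\mul x$ (by \eqref{mulx}) and $\dom(a^{-1}b)=\dom x$, so Lemma~\ref{lemalr} gives $x=a^{-1}b$. The main obstacle I anticipate is the careful bookkeeping of domains and multivalued parts when passing between the inclusion $ax\subseteq b$ and the equality $ax=b$: one must verify that $\St$-complementability supplies precisely the surjectivity onto $\St^\perp$ needed to close the gap, and that the relational identities $\mul x=\ker a$ and $\dom x=P_{\St^\perp}((W\St)^\perp)$ are compatible with the block-matrix product rules of Lemmas~\ref{suma}, \ref{filacolumna} and \ref{producto}. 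The forward direction (complementable $\Rightarrow ax=b$) should follow from the $W$-symmetry of $\pas$, while the converse requires showing $ax=b$ forces $\dom x=\St^\perp$ and hence $\dom\pas=\HH$.
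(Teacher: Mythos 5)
Your proposal is correct, and its route genuinely differs from the paper's at the decisive points. The paper also computes $W\pas$ and $\pas^*W$ as block products (via Lemmas \ref{producto}, \ref{filacolumna} and \ref{filacolumna2}), but it settles the two directions differently: for the forward implication it invokes Proposition \ref{propScomp}, which under complementability gives the \emph{equality} $W\pas=\pas^*W\in L(\HH)$, and reads off $ax=b$ by comparing entries; for the converse it cites \cite[Proposition 3.3]{Szeged}, namely that $\St+(W\St)^{\perp}=\HH$ if and only if $\ran b\subseteq \ran a$. You instead reduce everything to domain bookkeeping: since $\dom \pas=\St\oplus\dom x$, complementability is exactly $\dom x=\St^{\perp}$; the inclusion $ax\subseteq b$ holds \emph{unconditionally}; and since $a=P_\St W|_\St\in L(\St)$ is everywhere defined with $\ran x\subseteq \St$, one has $\dom(ax)=\dom x$, while $b=P_\St W|_{\St^{\perp}}$ is an operator defined on all of $\St^{\perp}$. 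Lemma \ref{lemalr} then gives both directions at once: $ax=b$ if and only if $\dom(ax)=\St^{\perp}$ if and only if $\dom\pas=\HH$. This yields a self-contained proof (no external citation, and only the always-valid $W$-symmetry inclusion $W\pas\subseteq \pas^*W$ instead of the equality), whereas the paper's route produces the explicit range criterion $\ran b\subseteq\ran a$, which it then reuses to get $\dom(a^{-1}b)=\St^{\perp}$ in the formula part; in your setting that range inclusion is immediate from $b=ax$. Your final identification $x=a^{-1}b$ via $x\subseteq a^{-1}b$, $\dom(a^{-1}b)=\dom x$ and $\mul(a^{-1}b)=\ker a=\mul x$ (equation \eqref{mulx}) coincides with the paper's argument.

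Two points to tighten when writing this up. First, the block-sum Lemma \ref{suma} is not the pertinent tool; the product computations rest on Lemmas \ref{producto}, \ref{filacolumna} and \ref{filacolumna2}. Second, the unconditional inclusion $ax\subseteq b$ is a real claim that needs proof: an inclusion between generated relations does not formally transfer to arbitrary generating blocks, so you should either pass to canonical entries (if $T\subseteq S$ then $P_{\St}T|_{\St^{\perp}}\subseteq P_{\St}S|_{\St^{\perp}}$, and one checks $ax\subseteq P_{\St}(W\pas)|_{\St^{\perp}}$ while $P_{\St}(\pas^*W)|_{\St^{\perp}}\subseteq b$), or argue directly: for $n\in (W\St)^{\perp}$ one has $P_\St Wn=0$, hence $a(-P_\St n)=P_\St W P_{\St^{\perp}}n=b(P_{\St^{\perp}}n)$, which is exactly the inclusion $ax\subseteq b$.
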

\begin{proof} In what follows all matrix representations are with respect to $\St.$ By Lemma \ref{producto},
	\begin{align*}
		W\pas=\matriz{a}{b}{b^*}{c}\matriz{I}{x}{0}{0}=\begin{pmatrix} a \\ b^*
		\end{pmatrix} \begin{pmatrix} I & x \end{pmatrix}=\begin{pmatrix} \begin{matrix} a \\ b^* 
			\end{matrix} &  \begin{pmatrix} a \\ b^*
			\end{pmatrix} x \end{pmatrix},
	\end{align*}
	where we used that $\begin{pmatrix} a \\ b^*
	\end{pmatrix} \in L(\St, \HH)$ and Lemma \ref{filacolumna2}.
	But, by \eqref{mulx},  $\mul x \cap \ker a + \mul x \cap \ker b^*=\ker a =\mul x.$ Hence, by Lemma \ref{filacolumna},
	$$W\pas=\begin{pmatrix} a & ax \\ b^* & b^*x
	\end{pmatrix}.$$
	
	By Corollary \ref{estrellaclausura} and Proposition \ref{producto}, 
	\begin{align*}
		\pas^*W=\matriz{I}{0}{x^*}{0}\matriz{a}{b}{b^*}{c}=\begin{pmatrix} I \\ x^*
		\end{pmatrix} \begin{pmatrix} a & b \end{pmatrix}=\begin{pmatrix} \begin{matrix} a & b
			\end{matrix} \\  x^* \begin{pmatrix} a & b
		\end{pmatrix} \end{pmatrix},
	\end{align*}
	where we used that $\begin{pmatrix} a & b \end{pmatrix} \in L(\HH,\St)$ and Lemma \ref{filacolumna}.
	
	If $W$ is $\St-$complementable, by Proposition \ref{propScomp}, $W\pas=\pas^*W \in L(\HH)$ then $ax=b.$ The converse follows using \cite[Proposition 3.3]{Szeged}, where it was proved that $\St+(W\St)^{\perp}=\HH$ if and only if $\ran b \subseteq \ran a.$
	
	Clearly $x \subseteq a^{-1}b.$ But, $\dom x=\dom ax=\dom b= \St^{\perp}=\dom a^{-1}b,$ because $\ran b \subseteq \ran a$ and, by \eqref{mulx}, $\mul x =\ker a=\mul a^{-1}b.$ Then $x=a^{-1}b.$
\end{proof}

Given $W\in L(\HH)^{+}$ and a closed subspace $\St \subseteq \HH$ the Schur complement $\WS$ of $W$ to $\St,$ was introduced by M. G. Krein in \cite{Krein}. If $\leq$ denotes the order in $L(\HH)$ induced by $L(\HH)^+,$ he proved that the set
$\{ X \in L(\HH): \ 0\leq X\leq W \mbox{ and } \ran X\subseteq \St\}$ has a maximum element  denoted by $\WS.$ The notion was later rediscovered by Anderson and Trapp in \cite{Shorted2}, where they also proved that
\begin{equation} \label{shorted}
	\WS=\inf \ \{ E^*WE: E^2=E \in L(\HH), \ \ker E=\St^\bot\}.
\end{equation}

We  show that 	$\WS$ is also the infimum of a suitable set of closed multivalued projections with fixed nullspace $\St^{\perp}.$ 
\begin{lema} \label{propScomp3} Let $W\in L(\HH)^+.$  If $W$ is $\St$-complementable then $W^{1/2}\pas=W^{1/2}\cpas \in L(\HH)$ and  $W\pas\in L(\HH)^+.$ 
\end{lema}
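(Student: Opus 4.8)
The plan is to prove the chain of equalities in Lemma \ref{propScomp3} by exploiting the decomposition $\pas = \cpas + (\{0\}\times \St\cap\ker W)$, which follows because $\cpas = \ol{\pas}$ by item $2$ of Lemma \ref{propPas} and $\mul\pas = \St\cap\ker W$ (Lemma \ref{propPas}, item $1$, using $W\in L(\HH)^+$). First I would observe that since $W$ is $\St$-complementable, $\dom\pas = \HH$, so $\pas$ is a closed-domain relation and its multivalued part is exactly $\St\cap\ker W$. The key point is that $\mul\pas\subseteq\ker W$, so that $W^{1/2}$ annihilates $\mul\pas$; hence $W^{1/2}\pas$ is single-valued even though $\pas$ itself is not, and the extra multivalued summand $\{0\}\times\St\cap\ker W$ of $\pas$ over $\cpas$ is killed upon multiplying by $W^{1/2}$.

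The central step is the equality $W^{1/2}\pas = W^{1/2}\cpas$. I would argue by the inclusion criterion of Lemma \ref{lemalr}. The inclusion $\cpas\subseteq\pas$ gives $W^{1/2}\cpas\subseteq W^{1/2}\pas$ immediately. For the reverse, I would show these two relations have the same domain and the same multivalued part. Since $\dom\cpas = \ol{\St}+(W\St)^\perp$ and $\dom\pas = \St+(W\St)^\perp = \HH$ (by complementability), and since $W^{1/2}$ is everywhere defined, the domains of both products are $\HH$; more carefully, $\St$-complementability forces $\St + (W\St)^\perp = \HH$, and by the Remark following Lemma \ref{lemaWS}, $W$ is also $\ol{\St}$-complementable, so $\dom\cpas = \HH$ as well. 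For the multivalued parts, $\mul(W^{1/2}\pas) = W^{1/2}(\mul\pas) = W^{1/2}(\St\cap\ker W) = \{0\}$ by item $2$ of Lemma \ref{propbasicas} and the fact that $\St\cap\ker W\subseteq\ker W = \ker W^{1/2}$; similarly $\mul(W^{1/2}\cpas) = W^{1/2}(\ol{\St}\cap\ker W) = \{0\}$. Thus both are operators with full domain $\HH$, and Lemma \ref{lemalr} yields equality. Boundedness then follows since $\cpas = \ol{\pas}$ is a closed multivalued projection with $\dom\cpas = \HH$, hence $W^{1/2}\cpas$ is a closed operator (as $W^{1/2}\in L(\HH)$ and $\cpas$ is closed) defined on all of $\HH$, so it is bounded by the closed graph theorem; therefore $W^{1/2}\pas = W^{1/2}\cpas\in L(\HH)$.

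For the final assertion $W\pas\in L(\HH)^+$, I would write $W\pas = W^{1/2}(W^{1/2}\pas)$. Since $W^{1/2}\pas\in L(\HH)$ and $W^{1/2}\in L(\HH)$, the product $W\pas$ is bounded. Positivity and selfadjointness I would extract from the $W$-selfadjointness of $\pas$: by Proposition \ref{propScomp}, $\St$-complementability gives $W\pas = \pas^*W$, so $W\pas$ is selfadjoint. To see positivity, I would use $W\pas = (W^{1/2}\cpas)^*(W^{1/2}\cpas)$, which I obtain because $\cpas$ is an idempotent-type relation: more precisely, since $\pas^2=\pas$ and $\pas$ is $W$-symmetric, $\pas^*W\pas = W\pas$, and replacing $\pas$ by $\cpas$ where $W^{1/2}$ is applied gives $W\pas = \pas^*W\pas = (W^{1/2}\pas)^*(W^{1/2}\pas)\ge 0$.

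The main obstacle I anticipate is the careful bookkeeping in the equality $W^{1/2}\pas = W^{1/2}\cpas$: one must verify that multiplication by $W^{1/2}$ genuinely collapses the multivalued discrepancy between $\pas$ and $\cpas$, and that the product relations inherit the full domain $\HH$. This rests on the interplay $\mul\pas = \St\cap\ker W\subseteq\ker W^{1/2}$ together with $\St$-complementability propagating to $\ol{\St}$-complementability; handling the closed-graph argument for boundedness, and the identity $\pas^*W\pas = W\pas$ extracted from idempotency and $W$-symmetry, will require invoking Lemmas \ref{productolema} and \ref{lemalr} to control products of unbounded relations.
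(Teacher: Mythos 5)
Your plan reproduces the paper's skeleton in its two correct parts, but let me first flag a slip of orientation: the containment between $\pas$ and $\cpas$ is backwards throughout your first paragraph. Since $\cpas=\ol{\pas}$ (Lemma \ref{propPas}, item 2), we have $\pas\subseteq\cpas$, not $\cpas\subseteq\pas$, and your starting ``decomposition'' $\pas=\cpas+(\{0\}\times\St\cap\ker W)$ cannot hold: with the relation sum the right-hand side has domain $\{0\}$, and with the componentwise sum it strictly contains $\pas$ whenever $\St$ is not closed. Thus the inclusion you call immediate, $W^{1/2}\cpas\subseteq W^{1/2}\pas$, is the false one; the true immediate inclusion is $W^{1/2}\pas\subseteq W^{1/2}\cpas$. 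This slip is ultimately harmless for your Lemma \ref{lemalr} step, because you verify that both products have domain $\HH$ and trivial multivalued part (via Lemma \ref{propbasicas} and $\mul\pas=\St\cap\ker W$, $\mul\cpas=\ol{\St}\cap\ker W$ from Lemma \ref{propPas}), so equality of the two relations follows from whichever inclusion one starts with; and your closing identity $W\pas=\pas^*W\pas=(W^{1/2}\pas)^*(W^{1/2}\pas)$, justified by $(W^{1/2}\pas)^*=\pas^*W^{1/2}$ from Lemma \ref{productolema}, is exactly the paper's.

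The genuine gap is the boundedness step. You assert that $W^{1/2}\cpas$ is closed ``as $W^{1/2}\in L(\HH)$ and $\cpas$ is closed'' and then invoke the closed graph theorem. That general principle is false: left multiplication of a closed relation by a bounded operator need not preserve closedness (right multiplication does). For instance, if $B\in L(\HH)$ is injective with nonclosed range, then $B^{-1}$ is a closed relation, yet $BB^{-1}=I_{\ran B}$ is not closed. Since an everywhere defined operator need not be bounded unless it is closed, your argument leaves $W^{1/2}\pas\in L(\HH)$ --- and with it the boundedness, selfadjointness and positivity of $W\pas$ --- unproven. The paper fills precisely this hole by exhibiting $W^{1/2}\cpas$ as an adjoint, hence closed: by Proposition \ref{propScomp}, $\ran W\subseteq\dom\pas^*$; complementability makes $\pas^*$ a closed projection with closed domain (Proposition \ref{bounded}, via Proposition \ref{Deutch}), so $\ran W^{1/2}\subseteq\clran W\subseteq\dom\pas^*$, and Lemma \ref{productolema} then gives $(\pas^*W^{1/2})^*=W^{1/2}\cpas$, which is closed, everywhere defined and single valued (its multivalued part is $(\dom \pas^*W^{1/2})^{\perp}=\{0\}$); only now does the closed graph theorem apply. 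Alternatively you could bypass adjoint calculus: $\pas^*$ is bounded (Proposition \ref{bounded}) and $W\pas=\pas^*W\in L(\HH)$ (Proposition \ref{propScomp}), whence $\Vert W^{1/2}y\Vert^2=\PI{Wy}{y}=\PI{Wy}{x}\leq\Vert W\pas\Vert\,\Vert x\Vert^2$ for every $(x,y)\in\pas$. Either way, some such argument must replace the unjustified closedness claim.
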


\begin{proof} If $W$ is $\St$-complementable, $\pas^*=\cpas^*$ is a closed projection with closed domain and, by Proposition \ref{propScomp}, $\ran W \subseteq \dom \pas^*.$ Then $\clran W \subseteq \dom \pas^*.$  Hence $\ran W^{1/2} \subseteq \dom \pas^*$ so that $\dom \pas^* W^{1/2}=\HH.$ Using Lemma \ref{productolema}, it follows that
	$$(\pas^*W^{1/2})^*=W^{1/2} \cpas.$$ 
	Then $W^{1/2}\cpas$ is an everywhere defined closed operator so that $W^{1/2}\cpas \in L(\HH).$ 
	Since $W^{1/2}\pas \subseteq W^{1/2}\cpas$ and $\dom W^{1/2}\pas=\HH$ it follows that $W^{1/2}\pas= W^{1/2}\cpas \in L(\HH).$
	
	Finally, using Proposition \ref{propScomp} and Lemma \ref{productolema}, $$W\pas=W\pas^2=\pas^*W\pas=(W^{1/2}\pas)^* (W^{1/2}\pas)\in L(\HH)^+.$$
\end{proof}

\begin{prop} \label{propinfSchur} Let $W\in L(\HH)^+$ and $\St$ be a closed subspace of $\HH.$ Then
	$$\WS=\inf \ \{ Q^*WQ: Q \in \SP(\HH), \dom Q=\HH, \ \ker Q=\St^{\perp}, \ \mul Q \subseteq \ker W\}.$$	
\end{prop}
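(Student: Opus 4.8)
The plan is to exhibit the set on the right—call it $\mathcal{Q}=\{Q^{*}WQ:Q\in\SP(\HH),\ \dom Q=\HH,\ \ker Q=\St^{\perp},\ \mul Q\subseteq\ker W\}$—as an enlargement of the Anderson--Trapp family in \eqref{shorted} that does not change the infimum, the key being that each $Q^{*}WQ$ collapses onto $Q_{0}^{*}WQ_{0}$ for a genuine bounded idempotent $Q_{0}$. First I would record the easy inclusion: if $E^{2}=E\in L(\HH)$ with $\ker E=\St^{\perp}$, then $E$ is a closed multivalued projection with $\dom E=\HH$ and $\mul E=\{0\}\subseteq\ker W$, so $E$ is an admissible $Q$ and $E^{*}WE\in\mathcal{Q}$. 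Hence every lower bound of $\mathcal{Q}$ is a lower bound of $\{E^{*}WE:E^{2}=E,\ \ker E=\St^{\perp}\}$, and by \eqref{shorted} any such lower bound is dominated by $\WS$.

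For the reverse direction I would fix an admissible $Q$. Since $Q\in\SP(\HH)$ with $\ker Q=\St^{\perp}$, write $Q=P_{\M,\St^{\perp}}$ with $\M:=\ran Q$ closed. Proposition \ref{PyI-P} decomposes $Q=Q_{0}\ \hat{\oplus}\ (\{0\}\times\mul Q)$, where the operator part $Q_{0}:=P_{\M\ominus(\M\cap\St^{\perp})\pl\St^{\perp}}$ satisfies $\dom Q_{0}=\dom Q=\HH$ and is closed, hence $Q_{0}\in L(\HH)$; it is a bounded idempotent with $\ker Q_{0}=\St^{\perp}$. The hypothesis $\mul Q\subseteq\ker W=\ker W^{1/2}$ gives, via Lemma \ref{propbasicas}, $\mul(W^{1/2}Q)=W^{1/2}(\mul Q)=\{0\}$, so $W^{1/2}Q$ is single valued; and since $W^{1/2}$ annihilates the multivalued summand, $W^{1/2}Q=W^{1/2}Q_{0}\in L(\HH)$.

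Next I would identify $Q^{*}WQ$, following the computational template of Lemma \ref{propScomp3}. Using Lemma \ref{productolema} with $W^{1/2}\in L(\HH)$ to justify $(W^{1/2}Q)^{*}=Q^{*}W^{1/2}$, together with $W=W^{1/2}W^{1/2}$, I obtain
\[
Q^{*}WQ=(Q^{*}W^{1/2})(W^{1/2}Q)=(W^{1/2}Q)^{*}(W^{1/2}Q)=(W^{1/2}Q_{0})^{*}(W^{1/2}Q_{0})=Q_{0}^{*}WQ_{0},
\]
a bounded positive operator lying in the Anderson--Trapp family. Thus $Q^{*}WQ=Q_{0}^{*}WQ_{0}\ge\WS$ by \eqref{shorted}, so $\WS$ is a lower bound of $\mathcal{Q}$. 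Combined with the first paragraph, which shows every lower bound of $\mathcal{Q}$ is $\le\WS$, this proves that $\WS$ is the greatest lower bound of $\mathcal{Q}$, that is, the asserted infimum.

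The membership checks and the domain/multivalued-part bookkeeping are routine; the one point requiring care is legitimizing the operator identity $Q^{*}WQ=Q_{0}^{*}WQ_{0}$—namely ensuring that $W^{1/2}Q$ is a genuine \emph{bounded} operator and that the adjoint distributes over the product—which is precisely where the decomposition of Proposition \ref{PyI-P} and the product-adjoint equality of Lemma \ref{productolema} carry the argument.
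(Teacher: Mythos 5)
Your proof is correct and takes essentially the same route as the paper's: decompose $Q=Q_0\ \hat{\oplus}\ (\{0\}\times \mul Q)$ via Proposition \ref{PyI-P}, use $\mul Q\subseteq \ker W$ to get $W^{1/2}Q=W^{1/2}Q_0$ with $Q_0\in L(\HH)$ a bounded idempotent with kernel $\St^{\perp}$, conclude $Q^*WQ=(W^{1/2}Q)^*(W^{1/2}Q)=Q_0^*WQ_0$, and invoke \eqref{shorted}. The only difference is that you make explicit two points the paper leaves implicit, namely the trivial containment of the Anderson--Trapp family in the new family and the adjoint identity $(W^{1/2}Q)^*=Q^*W^{1/2}$ via Lemma \ref{productolema}.
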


\begin{proof} Take $Q \in \SP(\HH)$ with $\dom Q=\HH,$ $\ker Q=\St^{\perp}$ and $\mul Q \subseteq \ker W.$ By Proposition \ref{PyI-P}, 
	$$Q=Q_0 \oplus Q_{\mul},$$ where $Q_0$ is a (closed) projection with $\ker Q_0=\ker Q=\St^{\perp}.$ But $\dom  Q_0=\HH$ so that $Q_0 \in L(\HH).$	Therefore, since $\mul Q \subseteq \ker W,$ $W^{1/2}Q=W^{1/2}Q_0,$ and then
	$$Q^*WQ=(W^{1/2}Q)^*(W^{1/2}Q)=Q_0^*WQ_0.$$ So, by \eqref{shorted}, the result follows.
\end{proof}

\begin{cor} \label{SchurWS} Let $W\in L(\HH)^+.$ If $W$ is $\St$-complementable then $$\WSb=W(I-\pas).$$
\end{cor}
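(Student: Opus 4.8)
The plan is to identify $I-\pas$ explicitly, recognize it as an admissible competitor in the infimum formula of Proposition \ref{propinfSchur}, and then trap $W(I-\pas)$ between $\WSb$ and itself by playing off the Krein (maximum) description of the Schur complement against the infimum description. First I would compute $I-\pas$. Since $\pas=P_{\St,(W\St)^\perp}$, a direct check from Proposition \ref{L1} (using the elementary identity $I-P_{\M,\N}=P_{\N,\M}$) gives $I-\pas=P_{(W\St)^\perp,\St}$. As $\St$ and $(W\St)^\perp$ are closed, Proposition \ref{L2} shows this is a closed multivalued projection; moreover $\dom(I-\pas)=\dom\pas=\HH$ by complementability, $\ker(I-\pas)=\St$, and $\mul(I-\pas)=\St\cap(W\St)^\perp=\St\cap\ker W\subseteq\ker W$ by Lemma \ref{lemaWS}. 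Thus $Q:=I-\pas$ is admissible for the infimum in Proposition \ref{propinfSchur} applied to the closed subspace $\St^\perp$ (whose orthogonal complement is $\St$), which yields $\WSb\le Q^*WQ$.

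Next I would show $Q^*WQ=W(I-\pas)$ and that this is a positive operator dominated by $W$ with range in $\St^\perp$. Since $W$ is $\St$-complementable, $\pas^*$ is a bounded projection (Proposition \ref{bounded}) and $W\pas=\pas^*W$ (Proposition \ref{propScomp}), so $W(I-\pas)=W-W\pas=(I-\pas^*)W$ is a bounded self-adjoint operator. Because $\St=\ker(I-\pas)$ and $W$ annihilates $\mul(I-\pas)\subseteq\ker W$, one gets $W(I-\pas)|_{\St}=0$, hence $\ran(W(I-\pas))\subseteq\St^\perp$; since $(I-\pas)^*=P_{\St^\perp,\overline{W\St}}$ acts as the identity on $\St^\perp$, it follows that $(I-\pas)^*W(I-\pas)=W(I-\pas)$. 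By Lemma \ref{propScomp3} the operator $G:=W^{1/2}(I-\pas)=W^{1/2}-W^{1/2}\pas$ lies in $L(\HH)$, so $W(I-\pas)=G^*G\ge0$, while $W-W(I-\pas)=W\pas\ge0$ by the same lemma. Therefore $0\le W(I-\pas)\le W$ with range in $\St^\perp$, and Krein's maximality definition of the Schur complement to $\St^\perp$ gives $W(I-\pas)\le\WSb$.

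Combining the two inequalities, $\WSb\le Q^*WQ=W(I-\pas)\le\WSb$, forces $\WSb=W(I-\pas)$. The main obstacle is the bookkeeping behind the identity $(I-\pas)^*W(I-\pas)=W(I-\pas)$: one must justify the adjoint and product manipulations for (possibly multivalued) relations, and this is exactly where $\St$-complementability is used, since it supplies the boundedness of $\pas^*$, of $W\pas$ and of $W^{1/2}\pas$ through Propositions \ref{bounded}, \ref{propScomp} and Lemma \ref{propScomp3}, turning otherwise delicate relational products into compositions of bounded operators.
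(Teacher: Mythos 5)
Your argument takes a genuinely different route from the paper's proof, and for \emph{closed} $\St$ it is correct: $I-\pas=P_{(W\St)^{\perp},\St}$ is then a closed multivalued projection with domain $\HH$, kernel $\St$ and multivalued part $\St\cap\ker W$, so it is an admissible competitor in Proposition \ref{propinfSchur} applied to $\St^{\perp}$, and your identities $Q^*WQ=W(I-\pas)=G^*G$, $0\le W(I-\pas)\le W$ and $\ran (W(I-\pas))\subseteq\St^{\perp}$ are all justified by Propositions \ref{bounded}, \ref{propScomp} and Lemma \ref{propScomp3}. This squeeze between the infimum characterization and Krein's maximality is self-contained, whereas the paper instead applies the operator-part decomposition of Proposition \ref{PyI-P} to $I-\cpas$ and quotes the external result \cite[Proposition 5.1]{CM}; notably, the paper never actually uses its own Proposition \ref{propinfSchur} in this proof, while you do.

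However, there is a genuine gap: the corollary does not assume $\St$ closed. In Section \ref{fourth} $\St$ is an arbitrary subspace, and the corollary is later invoked in Proposition \ref{ALSS5II} for an arbitrary subspace (in Section \ref{fifth}, for $\St=\ran A$, which need not be closed). For non-closed $\St$ your first inequality fails precisely at the step you treat as routine: by Proposition \ref{L2}, $I-\pas=P_{(W\St)^{\perp},\St}$ is closed if and only if $\St$ is closed, so $Q=I-\pas\notin\SP(\HH)$; moreover, admissibility in Proposition \ref{propinfSchur} applied to $\St^{\perp}$ requires $\ker Q=(\St^{\perp})^{\perp}=\ol{\St}$, whereas $\ker(I-\pas)=\St\subsetneq\ol{\St}$. (Your Krein-maximality half does survive for general $\St$, since Propositions \ref{bounded}, \ref{propScomp} and Lemma \ref{propScomp3} are stated without any closedness hypothesis.) The repair is exactly the closing move of the paper's own proof: since $W$ is also $\ol{\St}$-complementable and $(W\ol{\St})^{\perp}=(W\St)^{\perp}$, run your infimum argument with $\cpas=P_{W,\ol{\St}}$ in place of $\pas$ to obtain $\WSb\le W(I-\cpas)$, and then use Lemma \ref{propScomp3}: from $W^{1/2}\pas=W^{1/2}\cpas$ one gets $W\pas=W\cpas$, hence $W(I-\cpas)=W-W\cpas=W-W\pas=W(I-\pas)$, which closes the squeeze.
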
  
\begin{proof} Since $I-\cpas$ is closed and $\mul (I-\cpas) \subseteq \ker W,$ we get that $W(I-\cpas)=W(I-\cpas)_0,$ where $(I-\cpas)_0$ is the operator part of $I-\cpas,$ see Proposition \ref{PyI-P}. 
	Then, by \cite[Proposition 5.1]{CM}, 
	$$\WSb=W(I-\cpas)_0=W(I-\cpas)=W(I-\pas),$$ where we used Lemma \ref{propScomp3} for the last equality.
\end{proof}

\section{Weighted least squares solutions of inclusions}  \label{fifth}

In what follows, $W\in L(\HH)^+$ and $A\in \lr(\HH)$ are given. We consider the semi-norm
$$\Vert x \Vert_W:=\PI{x}{x}_W^{1/2}=\Vert W^{1/2} x\Vert, \ x \in \HH.$$
Recall that $Ax=\{y \in \HH: (x,y) \in A\}=y+\mul A$ for any $(x,y) \in A.$

\begin{Def}Given $b\in\HH$ a vector $x_0\in\HH$ is a $W$-\emph{least squares solution} ($W$-LSS) of the inclusion $b\in Ax$ if $x_0\in\dom A$ and there exists $z\in Ax_0$ such that
	\begin{equation}\label{ALSS}
		||z-b||_{W}= \underset{y \in \ran A}{\min} \ \ ||y-b||_{W}.
	\end{equation}
\end{Def}

\begin{obs} \label{obsLSS} A vector $x_0$ is a solution of the inclusion $b \in Ax$ if and only if $x_0$ is a solution of  the equation $Ax=b+\mul A.$ In fact, if $x_0$ is a solution of the inclusion  then $(x_0,b)\in A$ so that $Ax_0=b+\mul A.$ Conversely, if $Ax_0=b+\mul A$ then $b \in Ax_0,$ or equivalently $x_0$ is a solution of the inclusion $b \in Ax.$ 
\end{obs}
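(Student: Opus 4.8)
The plan is to unwind both sides directly from the definition of the image of a point under a relation, using the identity $Ax=y+\mul A$ valid for every $(x,y)\in A$ that was recorded in Section \ref{two}. By definition, $x_0$ is a solution of the inclusion $b\in Ax$ exactly when $(x_0,b)\in A$, while $x_0$ is a solution of the equation $Ax=b+\mul A$ exactly when the set $Ax_0$ coincides with the coset $b+\mul A$. Thus the assertion reduces to showing that $(x_0,b)\in A$ if and only if $Ax_0=b+\mul A$.

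For the forward implication I would assume $(x_0,b)\in A$ and apply the coset identity $Ax_0=y+\mul A$ with the choice $y=b$; this yields $Ax_0=b+\mul A$ immediately, so $x_0$ solves the equation. For the converse I would assume $Ax_0=b+\mul A$. Since $A$ is a linear relation, its multivalued part $\mul A$ is a subspace and in particular contains $0$, so that $b=b+0\in b+\mul A=Ax_0$. By the definition of $Ax_0$ this is precisely $(x_0,b)\in A$, that is $b\in Ax_0$, so $x_0$ solves the inclusion.

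I do not expect any genuine obstacle here, since the argument is a one-line translation in each direction. The only two points worth flagging are the use of $0\in\mul A$ in the converse, which guarantees that the coset $b+\mul A$ actually contains the representative $b$, and the use of the coset identity $Ax_0=y+\mul A$ in the forward direction; both are immediate from the conventions on linear relations fixed in Section \ref{two}.
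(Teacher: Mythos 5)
Your proposal is correct and follows essentially the same argument as the paper: the forward direction uses the coset identity $Ax_0=y+\mul A$ for $(x_0,b)\in A$, and the converse uses that $b\in b+\mul A=Ax_0$. The only difference is that you make explicit the (immediate) fact $0\in\mul A$, which the paper leaves unsaid.
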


The $W$-LSS of the equation $Ax=b$ with $A\in L(\HH)$ were studied in \cite{CM}. On the other side, the  LSS of the inclusion $b\in Ax$ were studied in \cite{LN}. Here, we discuss the $W-$LSS of $b\in Ax$ by means of the multivalued projection $P_{W,\ran A}$.  

For an operator $A\in L(\HH)$ and $b\in\HH,$ there is a LSS of the equation $Ax=b$ if and only if $b \in \ran A \oplus \ran A^{\perp}$ \cite{Nashed}. The analogue for $W$-LSS of inclusions is the following. 

\begin{prop}\label{ALSS5} Let $b\in \HH$. There exists a $W$-LSS of $b\in Ax$ if and only if  $b\in\dom P_{W,\ran A}.$ Therefore, there exists a $W$-LSS of $b\in Ax$ for every $b \in \HH$ if and only if $W$ is $\ran A$-complementable.
\end{prop}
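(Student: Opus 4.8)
The plan is to reduce the existence of a $W$-LSS to the attainment of the minimum in \eqref{ALSS} by some element of $\St := \ran A$, and then to identify that attainment with the membership $b \in \dom \pas$.

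First I would note that, since $\ran A=\{z\in\HH: z\in Ax_0 \text{ for some } x_0\in\dom A\}$, there is a $W$-LSS of $b\in Ax$ if and only if the minimum $\min_{y\in\St}\|y-b\|_W$ is attained at some $z\in\St$. Indeed, any minimizer $z\in\ran A$ satisfies $z\in Ax_0$ for some $x_0\in\dom A$, so that $x_0$ is a $W$-LSS; and conversely, the second component $z$ of a $W$-LSS is such a minimizer. Thus the proposition reduces to showing that the minimum over $\St$ is attained if and only if $b\in\St+\St^{\perp_W}=\dom\pas$.

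For the implication from $\dom\pas$, write $b=s+n$ with $s\in\St$ and $n\in\St^{\perp_W}$. For every $y\in\St$ one has $y-s\in\St$, so $\PI{W(y-s)}{n}=0$ by the definition of $\St^{\perp_W}$ (using $W=W^*$); expanding gives $\|y-b\|_W^2=\|(y-s)-n\|_W^2=\|y-s\|_W^2+\|n\|_W^2\geq\|n\|_W^2$, with equality at $y=s$. Hence $z=s$ attains the minimum. For the converse, if $z\in\St$ attains the minimum, the usual variational argument applied to $t\mapsto\|z+ts-b\|_W^2$ for fixed $s\in\St$ and $t\in\mathbb{C}$ forces $\PI{W(z-b)}{s}=0$ for all $s\in\St$, so $b-z\in\St^{\perp_W}$ and therefore $b=z+(b-z)\in\St+\St^{\perp_W}=\dom\pas$. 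This establishes the first equivalence.

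The final assertion then follows immediately: by the first part a $W$-LSS exists for every $b\in\HH$ precisely when $\dom\pas=\HH$, which is by definition the $\ran A$-complementability of $W$. The only point demanding care throughout is that $W$ is merely positive semi-definite, so $\|\cdot\|_W$ is a genuine seminorm; but the orthogonality characterization of a minimizer uses only that $\PI{W\cdot}{\cdot}$ is a positive semi-definite sesquilinear form, so the degeneracy causes no difficulty and no extra hypothesis is needed.
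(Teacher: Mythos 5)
Your proof is correct and takes essentially the same route as the paper's: both reduce the existence of a $W$-LSS to the attainment of the minimum over $\ran A$, and identify attainment with the $W$-orthogonality condition $z-b\in (W\ran A)^{\perp}$, hence with $b\in \ran A+(W\ran A)^{\perp}=\dom P_{W,\ran A}$, after which the complementability statement is just the definition $\dom P_{W,\ran A}=\HH$. The only difference is one of execution: the paper factors through $W^{1/2}$ and invokes the classical best-approximation criterion ($d(w_0,\M)=\Vert w_0-m_0\Vert$ iff $w_0-m_0\in\M^{\perp}$) in the genuine Hilbert norm, whereas you prove the same criterion directly for the semi-definite form $\PI{W\cdot}{\cdot}$ by the variational argument, which handles the degeneracy equally well.
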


\begin{proof}
	Using the well-known fact that given $w_0\in\HH$ and  a subspace $\M\subseteq \HH$, $d(w_0, \M)=\inf_{m\in \M}||w_0-m||=||w_0-m_0||,$  for $m_0\in \M$ if and only if $w_0-m_0\in\M^\bot$, and the definition of $W$-LSS of an inclusion, the result follows. In fact, $x_0$ is a $W$-LSS of $b\in Ax$ if and only if there exists $z\in Ax_0$ such that $||W^{1/2}z-W^{1/2}b||=d(W^{1/2}b, W^{1/2}\ran A). $ But this is equivalent to $W^{1/2}(z-b)\in (W^{1/2}\ran A)^\bot$ or $z-b\in W^{-1}(\ran A^\bot)$, or $b\in \ran A+W^{-1}(\ran A^\bot)=\dom P_{W,\ran A}.$
	Finally, there exists a $W$-LSS for every $b \in \HH$ if and only if $\dom P_{W, \ran A}=\HH,$ or equivalently $W$ is $\ran A$-complementable.
\end{proof}

Recall that if $W \in L(\HH)^+$ and $\St$ is  a subspace of $\HH$ the relation $W^{1/2}(I-P_{W,\St})$ is in fact an operator,  see Lemma \ref{propPas}, 1. 
\begin{prop}\label{ALSS5II} Let $\St$ be a subspace of $\HH$ and let $b\in\dom P_{W,\St}.$ Then
	$$\min_{y\in\St}||y-b||_{W}=\Vert W^{1/2} (I-P_{W,\St}) b\Vert$$ and the minimum is attained at $y\in \St$ if and only if $y \in  P_{W,\St}b.$ 	
	
	If $W$ is $\St$-complementable then
	$$\min_{y\in\St}||y-b||_{W}=\Vert (\WSb)^{1/2}b \Vert.$$
\end{prop}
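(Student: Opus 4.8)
The plan is to establish the two displayed formulas separately, relying on the geometry of $W$-orthogonality encoded in $\pas = P_{W,\St}$. First I would handle the main identity $\min_{y \in \St} \|y-b\|_W = \|W^{1/2}(I-\pas)b\|$. The key observation is that minimizing $\|y-b\|_W = \|W^{1/2}(y-b)\|$ over $y \in \St$ is the same as computing the distance in the ambient inner product from $W^{1/2}b$ to the subspace $W^{1/2}\St$, i.e. $\min_{y \in \St}\|y-b\|_W = d(W^{1/2}b, W^{1/2}\St)$ (noting that $W^{1/2}\St$ is a subspace, though possibly non-closed, so the infimum is what is meant). Since $b \in \dom \pas = \St + (W\St)^\perp$, I can write $b = y_0 + n$ with $y_0 \in \St$ and $n \in (W\St)^\perp = \St^{\perp_W}$, so that $y_0 \in \pas b$. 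For this particular $y_0$ one has $y_0 - b = -n \in \St^{\perp_W}$, which means $W^{1/2}(y_0-b) = -W^{1/2}n \perp W^{1/2}\St$; hence $y_0$ realizes the distance and the minimum equals $\|W^{1/2}(y_0-b)\| = \|W^{1/2}(b-y_0)\|$. Finally $b - y_0 = (I-\pas)b$ as elements modulo $\mul \pas$, but since $W^{1/2}(I-\pas)$ is a genuine operator (by Lemma \ref{propPas}, item 1, as $\mul(I-\pas) = \mul \pas \subseteq \ker W$ when $W \in L(\HH)^+$), the value $\|W^{1/2}(I-\pas)b\|$ is well-defined and independent of the representative, giving $\min_{y \in \St}\|y-b\|_W = \|W^{1/2}(I-\pas)b\|$.

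Next I would prove the characterization of minimizers: the minimum is attained at $y \in \St$ if and only if $y \in \pas b$. The forward direction uses the standard fact recalled in the proof of Proposition \ref{ALSS5}, namely that $y$ attains the distance from $b$ in the $W$-semi-norm if and only if $y - b \in \St^{\perp_W} = (W\St)^\perp$, equivalently $b - y \in \St^{\perp_W}$; combined with $y \in \St$ this says exactly $b = y + (b-y)$ with $y \in \St$ and $b-y \in \St^{\perp_W}$, which is precisely the statement $y \in \pas b = P_{\St,\St^{\perp_W}}b$. The converse runs the same equivalence backward: if $y \in \pas b$ then $b - y \in \St^{\perp_W}$, so $W^{1/2}(b-y) \perp W^{1/2}\St$ and $y$ is a minimizer.

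For the final formula, I would invoke $\St$-complementability together with the Schur complement identity from Corollary \ref{SchurWS}, which gives $\WSb = W(I-\pas)$. The idea is that $\|W^{1/2}(I-\pas)b\|^2 = \PI{W(I-\pas)b}{(I-\pas)b}$, and I want to recognize this as $\PI{(\WSb)b}{b}= \|(\WSb)^{1/2}b\|^2$. When $W$ is $\St$-complementable, Lemma \ref{propScomp3} shows $W^{1/2}\pas \in L(\HH)$ and $W\pas = \pas^* W \pas \in L(\HH)^+$, so $W(I-\pas) = W - W\pas$ is a bounded positive operator equal to $\WSb$ by Corollary \ref{SchurWS}; here $I - \pas$ should be read via its operator part so the expression makes sense. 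Then $\|W^{1/2}(I-\pas)b\|^2 = \PI{(I-\pas)^*W(I-\pas)b}{b}$, and using that $I-\pas$ is ($W$-)idempotent in the appropriate sense together with $\WSb = W(I-\pas)$ being positive and satisfying $(I-\pas)^*W(I-\pas) = W(I-\pas) = \WSb$, one concludes $\|W^{1/2}(I-\pas)b\| = \|(\WSb)^{1/2}b\|$.

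The main obstacle I expect is the careful bookkeeping around multivalued parts: $\pas b$ is a coset $y_0 + \mul \pas$ rather than a single vector, so every expression like $W^{1/2}(I-\pas)b$ or $(I-\pas)^*W(I-\pas)$ must be justified as well-defined by checking that $\mul \pas \subseteq \ker W^{1/2} = \ker W$, which is exactly where the hypothesis $W \in L(\HH)^+$ and Lemma \ref{propPas} enter. The second delicate point is verifying the algebraic identity $(I-\pas)^*W(I-\pas) = W(I-\pas)$ at the level of operator parts, so that the positive square root $(\WSb)^{1/2}$ can legitimately replace $W^{1/2}(I-\pas)$ inside the norm; this requires the $W$-selfadjointness of $\pas$ guaranteed by $\St$-complementability (Proposition \ref{propScomp}) rather than mere $W$-symmetry.
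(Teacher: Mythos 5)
Your proposal is correct and takes essentially the same approach as the paper: the value of the minimum and the characterization of its minimizers both come from the orthogonality of $W^{1/2}\St$ and $W^{1/2}\St^{\perp_W}$ together with the fact that $\mul P_{W,\St}\subseteq \ker W$ makes $W^{1/2}(I-P_{W,\St})$ a well-defined (single-valued) operator, and the final formula follows from Corollary \ref{SchurWS} and Lemma \ref{propScomp3} exactly as in the paper. The only cosmetic difference is that the paper derives both claims from the explicit Pythagorean splitting $W^{1/2}b=W^{1/2}P_{W,\St}b+W^{1/2}(I-P_{W,\St})b$, whereas you invoke the classical best-approximation criterion (residual orthogonal to the subspace), which is the same argument in different packaging.
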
	

\begin{proof}
	If $b\in\dom P_{W,\St},$ using that $W^{1/2}P_{W, \St}$ is an operator, by \cite[Proposition I.4.2 (e)]{Cross1}, we can write $W^{1/2}-W^{1/2}P_{W, \St}=W^{1/2}(I-P_{W, \St}).$ Then
	\begin{align*}
		W^{1/2}b=W^{1/2}P_{W, \St}b+(W^{1/2}-W^{1/2}P_{W, \St})b=W^{1/2}P_{W, \St}b+W^{1/2}(I-P_{W, \St})b.
	\end{align*}
	
	So that, for any $y\in \St,$
	\begin{align}\label{eq611}
		\Vert y -b \Vert_W^2&=\Vert W^{1/2}y-W^{1/2}b\Vert^2\nonumber\\
		&=\Vert W^{1/2}y-W^{1/2}P_{W, \St}b\Vert^2+\Vert W^{1/2}(I-P_{W, \St})b\Vert^2\nonumber\\
		&\geq \Vert W^{1/2}(I-P_{W, \St})b\Vert^2,
	\end{align}
	because $W^{1/2}(\St)$ and $W^{1/2}(\ker P_{W, \St})$ are orthogonal subspaces. 
	Moreover, equality is attained in \eqref{eq611} taking any $y \in P_{W, \St}b.$ In fact
	$$W^{1/2}P_{W, \St}b=W^{1/2}y \mbox{ for any } y\in P_{W, \St}b.$$ To see this, take $y\in P_{W, \St}b$ then $P_{W, \St}b=y+\mul P_{W, \St}.$ If $z \in P_{W, \St}b$ then $z=y+n,$ $n \in \ker W.$ So that $W^{1/2}z=W^{1/2}y.$
	
	Furthermore,  $y \in \St$ is such that $\Vert y- b\Vert_W=\min_{y\in\St}||y-b||_{W}$ if and only if $W^{1/2}y-W^{1/2}P_{W, \St}b=0$ if and only if $y-y'\in \ker W \cap \St=\mul P_{W, \St}$ for any $y' \in P_{W, \St}b,$  or equivalently, $y\in y'+\mul P_{W, \St}=P_{W, \St}b.$
	
	If $W$ is $\St$-complementable, by Corollary \ref{SchurWS}, $\WSb=W(I-P_{W, \St}).$ Then, by Lemma \ref{propScomp3}, 
	\begin{align*}
		\Vert W^{1/2}(I-P_{W, \St})b\Vert^2&=\PI{W^{1/2}(I-P_{W, \St})b}{W^{1/2}(I-P_{W, \St})b}\\
		&=\PI{(W^{1/2}(I-P_{W, \St}))^*W^{1/2}(I-P_{W, \St})b}{b}\\
		&=\Vert (\WSb)^{1/2}b \Vert^2
	\end{align*} and the result follows.
\end{proof}


\begin{prop}\label{ALSS3} Let $b\in\dom P_{W,\ran A}$. Then the following are equivalent:
	\begin{enumerate}
		\item $x_0$ is a $W-$LSS of the inclusion $b\in Ax$.
		\item $A x_0\cap P_{W, \ran A}b$ is not empty.
		\item $x_0\in A^{-1}P_{W, \ran A}b.$
		\item $x_0$ is a solution of the inclusion $0 \in A^*W(Ax-b).$
	\end{enumerate}
\end{prop}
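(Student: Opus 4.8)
Set $\St:=\ran A$, so that the relevant multivalued projection is $\pas=P_{W,\ran A}$. The plan is to route every condition through membership in the set $\pas b$, proving $i)\Leftrightarrow ii)$, $ii)\Leftrightarrow iii)$, and $ii)\Leftrightarrow iv)$. The first two equivalences are essentially an unwinding of definitions once Proposition \ref{ALSS5II} is in hand, while the equivalence with $iv)$ is where the actual work lies.

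For $i)\Leftrightarrow ii)$ I would apply Proposition \ref{ALSS5II} with the subspace $\St=\ran A$. Since $b\in\dom \pas$, that result identifies the minimizers of $\min_{y\in\ran A}\Vert y-b\Vert_W$ as exactly the points of $\pas b$. By definition $x_0$ is a $W$-LSS iff $x_0\in\dom A$ and some $z\in Ax_0\subseteq\ran A$ attains this minimum, i.e. $z\in\pas b$; this is precisely $Ax_0\cap\pas b\neq\emptyset$ (which in turn forces $x_0\in\dom A$). For $ii)\Leftrightarrow iii)$ I would unwind the inverse relation: $Ax_0\cap\pas b\neq\emptyset$ means there is $z$ with $(x_0,z)\in A$ and $z\in\pas b$, equivalently $(z,x_0)\in A^{-1}$ with $z\in\pas b$, which is $x_0\in A^{-1}(\pas b)=A^{-1}\pas b$.

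The core step is $ii)\Leftrightarrow iv)$. First I would give the relation in $iv)$ a precise meaning as a composition of linear relations: $x_0$ solves $0\in A^*W(Ax-b)$ iff $(x_0,0)$ lies in the product $A^*W(A-b)$, i.e. iff there is $y\in Ax_0$ with $(W(y-b),0)\in A^*$. Using $\ker A^*=(\ran A)^\perp=\St^\perp$ from the preliminaries, this says $W(y-b)\in\St^\perp$, equivalently $y-b\in W^{-1}(\St^\perp)=\St^{\perp_W}=\ker\pas$. To match this with $ii)$ I would prove that for $y\in\ran A=\St$ one has $y\in\pas b\Leftrightarrow y-b\in\ker\pas$. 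Writing $b=s+n$ with $s\in\St$ and $n\in\ker\pas$ (possible as $b\in\dom\pas=\St+\ker\pas$) and recalling $\pas b=s+\mul\pas$ with $\mul\pas=\St\cap\ker\pas=\St\cap\ker W$ (Lemma \ref{propPas}), the forward direction is immediate, and conversely $y\in\St$ with $y-b\in\ker\pas$ forces $y-s\in\St\cap\ker\pas=\mul\pas$, hence $y\in\pas b$. Combining, $iv)$ asserts exactly the existence of $y\in Ax_0$ with $y\in\pas b$, which is $ii)$.

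The main obstacle I anticipate is the correct formalization and unwinding of the relation inclusion in $iv)$: one must interpret $A^*W(Ax-b)$ as an honest composition of linear relations and extract the identity $\ker A^*=(\ran A)^\perp$, taking care that no closedness or boundedness of $A$ is needed for this membership computation. Once $iv)$ is reduced to ``$\exists\,y\in Ax_0$ with $y-b\in\ker\pas$,'' the remaining argument is routine bookkeeping with the decomposition $b=s+n$ and the description of $\mul\pas$.
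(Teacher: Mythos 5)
Your proposal is correct and follows essentially the same route as the paper: minimizers are identified with $P_{W,\ran A}b$ via Proposition \ref{ALSS5II}, $iii)$ is obtained by unwinding the inverse relation, and $iv)$ is reduced, via $\ker A^*=(\ran A)^\perp$, to the existence of $y\in Ax_0$ with $y-b\in W^{-1}\big((\ran A)^\perp\big)=\ker P_{W,\ran A}$, which is then matched with $ii)$ using the decomposition $b=s+n\in \ran A+\ker P_{W,\ran A}$ and $\mul P_{W,\ran A}=\ran A\cap\ker W$. The only cosmetic difference is that you isolate the equivalence ``$y\in P_{W,\ran A}b\Leftrightarrow y-b\in\ker P_{W,\ran A}$ for $y\in\ran A$'' as a separate claim, which the paper argues inline in both directions.
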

\begin{proof} 
	
	Set $Q:= P_{W,\ran A}.$ By Proposition \ref{ALSS5}, $\underset{y \in \ran A}{\min} \ \ ||y-b||_{W}$ is attained at any   $y\in  Qb .$ Consequently, $x_0$ is a $W$-LSS of the inclusion $b\in Ax$ if and only if there exists $y\in A x_0\cap Qb.$ Thus, $i)$ and $ii)$ are equivalent. On the other hand, there exists $y\in Ax_0\cap Qb$ if and only if there exists $y\in\HH$ such that $(x_0,y)\in A$ and $(b,y)\in Q$ or equivalently, $(b,x_0)\in A^{-1}Q. $ This means that $ii)$ and $iii)$ are equivalent.
	
	$ii)\Leftrightarrow iv)$: If $Ax_0 \cap Qb$ is not empty then there exists $y$ such that $(x_0,y) \in A$ and  $(b,y) \in Q.$ Therefore $b=y+w$ for $w \in (W \ran A)^{\perp}.$
	Then $y-b=-w \in (Ax_0-b) \cap W^{-1}(\ran A^{\perp}),$ so that $W(y-b)\in W(Ax_0-b) \cap \ran A^{\perp}=W(Ax_0-b) \cap \ker A^*.$ Hence $(W(y-b),0) \in A^*$ then $0 \in A^*W(y-b)$ and so $0 \in A^*W(Ax_0-b).$
	Conversely, suppose that $0 \in A^*W(Ax_0-b).$ Then there exists $y \in Ax_0$ such that $(y-b,0) \in A^*W,$ or $y-b \in W^{-1}(\ran A^{\perp}).$ Then $y=b+w$ for $w \in W^{-1}(\ran A^{\perp}).$ But, since $b \in \dom Q,$ then $b=z+w',$ $z \in \ran A$ and $w' \in (W\ran A)^{\perp},$ so that $z \in Qb$ and $y=z+w+w'$ and, since $y \in \ran A,$ it follows that $w+w' \in \mul P_{W,\ran A}.$ Finally, since $Qb=z+ \mul Q,$ we get that $y \in Qb \cap Ax_0.$
\end{proof}

When $A \in L(\HH)$, it is well known that $x_0$ is a LSS of $Ax=b$ if and only if $Ax_0=P_{\clran A}b.$ Or equivalently, $x_0$ is a solution of the normal equation $A^*(Ax-b)=0.$ The set of LSS of $Ax=b$ is $x_0+\ker A$ where $x_0$ is a particular LSS \cite{GrevilleGI}.
Similar results were given in \cite{CM} when a positive weight is considered, with the additional hypothesis of $\St$-complementability. For inclusions we have the following result.
\begin{cor}\label{corALSS3} Let $b\in\dom P_{W,\ran A}$. Then the following are equivalent:
	\begin{enumerate}
		\item $x_0$ is a $W-$LSS of the inclusion $b\in Ax.$
		\item $Ax_0+\mul P_{W, \ran A}=P_{W, \ran A}b+\mul A.$
		\item $x_0$ is a solution of the normal equation $$A^*W(Ax-b)=A^*W(\mul A).$$
	\end{enumerate}
\end{cor}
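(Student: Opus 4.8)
The plan is to prove Corollary \ref{corALSS3} by building on Proposition \ref{ALSS3}, which already provides several equivalent characterizations of a $W$-LSS; in particular, item $i)$ here coincides with item $i)$ there, so it suffices to show that the present items $ii)$ and $iii)$ are each equivalent to one of the characterizations already available. The natural route is to establish the cycle $i) \Leftrightarrow ii)$ using Proposition \ref{ALSS3} $ii)$, and $i) \Leftrightarrow iii)$ using Proposition \ref{ALSS3} $iv)$.

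For $i) \Leftrightarrow ii)$, the key observation is that, by Proposition \ref{ALSS3}, $x_0$ is a $W$-LSS if and only if $Ax_0 \cap P_{W,\ran A}b \neq \emptyset$. First I would recall that for any $(x_0,y)\in A$ we have $Ax_0 = y + \mul A$, and that $P_{W,\ran A}b = y_0 + \mul P_{W,\ran A}$ for any $y_0 \in P_{W,\ran A}b$. Two cosets $y+\mul A$ and $y_0+\mul P_{W,\ran A}$ of (possibly different) subspaces intersect if and only if $y - y_0 \in \mul A + \mul P_{W,\ran A}$, which is precisely the statement that the two cosets $Ax_0 + \mul P_{W,\ran A}$ and $P_{W,\ran A}b + \mul A$ coincide as affine sets. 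Thus the nonemptiness of the intersection translates exactly into the equality in $ii)$; this is the algebraic heart of the argument and should be written carefully, since it is the point where the multivalued parts genuinely interact.

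For $i) \Leftrightarrow iii)$, I would start from Proposition \ref{ALSS3} $iv)$, namely that $x_0$ is a $W$-LSS if and only if $0 \in A^*W(Ax_0 - b)$. Since $A^*W(Ax_0-b)$ is a coset of $\mul(A^*W) = \mul A^* + A^*W(\mul A)$, and more to the point $A^*W(Ax_0) = A^*W y + A^*W(\mul A)$ for a representative $y$, the inclusion $0 \in A^*W(Ax_0-b)$ can be rewritten as $A^*Wb \in A^*W(Ax_0) = A^*WAx_0 + \text{(multivalued contributions)}$. Unwinding the definitions of the sum and product of relations, the condition $0\in A^*W(Ax-b)$ is equivalent to $A^*WAx_0 \cap \left(A^*Wb + A^*W(\mul A)\right) \neq \emptyset$, which rearranges to the normal equation $A^*W(Ax-b) = A^*W(\mul A)$ as an equality of cosets. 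The main obstacle throughout will be bookkeeping the multivalued parts: one must keep track of $\mul A$, $\mul P_{W,\ran A} = \ran A \cap \ker W$ (from Lemma \ref{propPas}), and $\mul(A^*W)$, and verify that the coset equalities asserted in $ii)$ and $iii)$ are the correct reformulations of the set-intersection conditions rather than stronger or weaker statements. I would handle this by consistently using Lemma \ref{lemalr} and the identity $Tx = y + \mul T$ to pass between cosets and their representatives.
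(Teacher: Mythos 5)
Your treatment of $i)\Leftrightarrow ii)$ is correct and is essentially the paper's own argument: both reduce, via Proposition \ref{ALSS3}, to the nonemptiness of $Ax_0\cap P_{W,\ran A}b$, and your observation that the cosets $y+\mul A$ and $y_0+\mul P_{W,\ran A}$ meet exactly when $y-y_0\in \mul A+\mul P_{W,\ran A}$, which is in turn the equality of the cosets $Ax_0+\mul P_{W,\ran A}$ and $P_{W,\ran A}b+\mul A$ of the common subspace $\mul A+\mul P_{W,\ran A}$, is a clean and valid packaging of the paper's element chase.

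The $i)\Leftrightarrow iii)$ part, however, contains a step that fails. You rewrite $0\in A^*W(Ax_0-b)$ as ``$A^*Wb\in A^*W(Ax_0)$'', i.e.\ as $A^*WAx_0\cap\bigl(A^*Wb+A^*W(\mul A)\bigr)\neq\emptyset$. Images under a linear relation do not distribute over differences of cosets: $y-b$ may lie in $\dom (A^*W)$ while neither $y$ nor $b$ does, in which case $A^*W(Ax_0-b)$ is nonempty but $A^*WAx_0$ and $A^*Wb$ are both \emph{empty}. Concretely, take $W=I$, $A=\gr S$ with $S$ the selfadjoint multiplication by $t$ on $L^2(1,\infty)$, $x_0(t)=t^{-5/2}$ and $b=Sx_0$: then $x_0$ solves the inclusion exactly, so $0\in A^*W(Ax_0-b)=S^*(0)=\{0\}$ and both $i)$ and $iii)$ hold, yet $A^*Wb=A^*WAx_0=S^*(Sx_0)=\emptyset$ because $Sx_0\notin\dom S^*$; hence your intermediate condition is not equivalent to $iv)$ of Proposition \ref{ALSS3}. (A minor slip in the same passage: since $W$ is an operator, $\mul(A^*W)=\mul A^*$; the subspace $\mul A^*+A^*W(\mul A)=A^*W(\mul A)$ that you wrote is $\mul(A^*WA)$.) The repair is to never split off $A^*Wb$. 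Either argue as the paper does --- pick $y\in Ax_0\cap P_{W,\ran A}b$, write $b=y+z$ with $z\in\ker P_{W,\ran A}$, so that $Wz\in(\ran A)^{\perp}=\ker A^*$ and $A^*W(Ax_0-b)=A^*W(\mul A-z)=A^*W(\mul A)$, the converse being immediate because $0\in\mul(A^*W)\subseteq A^*W(\mul A)$ always --- or carry your own coset idea to the end: $A^*W(Ax_0-b)$ is the image of a coset of $\mul A$ under the relation $A^*W$, hence it is either empty or a single coset of $A^*W(\mul A)$; since $0\in A^*W(\mul A)$, the condition $0\in A^*W(Ax_0-b)$ of Proposition \ref{ALSS3} $iv)$ is equivalent to the equality $A^*W(Ax_0-b)=A^*W(\mul A)$, which is $iii)$.
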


\begin{proof} 	Set $Q:= P_{W,\ran A}.$ By Proposition \ref{ALSS3}, $i)$ holds if and only if $Ax_0 \cap Qb \not = \emptyset.$ 
	
	$i)\Leftrightarrow ii)$: If $i)$ holds, let $y \in Ax_0 \cap Qb.$ Then $Ax_0=y+\mul A$ and $Qb=y+\mul Q.$ Hence $Ax_0+\mul Q=Qb+\mul A.$ Conversely, if $Ax_0+\mul Q=Qb+\mul A$ then $Qb \subseteq Ax_0+\mul Q.$  Then, if $y \in Qb$  there exists $y' \in Ax_0$ and $z\in \mul Q$ such that $y=y'+z.$ Then $y-z=y' \in Qb+\mul Q=Qb$ so that $y-z=y'\in Qb \cap Ax_0$ and $i)$ holds. 
	
	$i)\Leftrightarrow iii)$: If $i)$ holds, let $y \in Ax_0 \cap Qb.$ Then $(x_0,y) \in A$ and $b=y+z$ for $z \in \ker Q.$ Since $Ax_0=y+\mul A,$ it follows that
	\begin{align*}
		A^*W(Ax_0-b)&=A^*W(y+\mul A-y-z)=A^*W(\mul A-z)\\
		&=A^*W(\mul A)+A^*Wz=A^*W(\mul A),
	\end{align*}
	because $A^*Wz \in A^*W(\ker Q)=A^*W(W^{-1}(\ran A^{\perp})) \subseteq A^*(\ran A^{\perp})=\{0\}.$ Conversely, if $A^*W(Ax_0-b)=A^*W(\mul A)$ then $0 \in A^*W(Ax_0-b)$ because $0 \in \mul A^*W \subseteq A^*W(\mul A).$ Then, by item $iv)$ of Proposition \ref{ALSS3}, $i)$ holds.
\end{proof}

\begin{prop}\label{ALSS2}Let $b\in\dom P_{W,\ran A}$ and $x_0$ be a $W$-LSS solution of $b\in Ax$. Then the set of $W$-LSS of the inclusion $b \in Ax$ is $x_0+A^{-1}(\mul P_{W,\ran A})=x_0+A^{-1}\ker W.$
\end{prop}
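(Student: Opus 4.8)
The plan is to identify the solution set explicitly and then recenter it at the particular solution $x_0$. Set $Q := P_{W,\ran A}$. By Proposition \ref{ALSS3}, a vector $x$ is a $W$-LSS of $b \in Ax$ precisely when $x \in A^{-1}Qb$, so the entire solution set equals $A^{-1}Qb$. Since $x_0$ is a $W$-LSS, there is some $y_0 \in Ax_0 \cap Qb$, and because $Qb$ is a coset of $\mul Q$ we may write $Qb = y_0 + \mul Q$. The goal then splits into two equalities: $A^{-1}Qb = x_0 + A^{-1}(\mul Q)$ and $A^{-1}(\mul Q) = A^{-1}\ker W$.

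The heart of the argument is the first equality, which I would establish by double inclusion, exploiting that $A$ is a linear subspace of $\HH \times \HH$. For $\subseteq$, given $x \in A^{-1}Qb$ I would pick $(x,y) \in A$ with $y \in Qb$, so that $y - y_0 \in \mul Q$; subtracting $(x_0,y_0) \in A$ yields $(x - x_0, y - y_0) \in A$, whence $x - x_0 \in A^{-1}(\mul Q)$. For $\supseteq$, given $x = x_0 + x'$ with $(x',m) \in A$ and $m \in \mul Q$, adding $(x_0,y_0) \in A$ gives $(x, y_0 + m) \in A$ with $y_0 + m \in y_0 + \mul Q = Qb$, so $x \in A^{-1}Qb$.

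Finally I would upgrade $A^{-1}(\mul Q)$ to $A^{-1}\ker W$. By Lemma \ref{propPas}, applied with $\St = \ran A$, one has $\mul Q = \ran A \cap \ker W$, so the inclusion $A^{-1}(\mul Q) \subseteq A^{-1}\ker W$ is immediate. Conversely, if $(x,k) \in A$ with $k \in \ker W$, then automatically $k \in \ran A$, hence $k \in \ran A \cap \ker W = \mul Q$ and $x \in A^{-1}(\mul Q)$. Combining the two equalities gives the stated description $x_0 + A^{-1}(\mul P_{W,\ran A}) = x_0 + A^{-1}\ker W$.

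I do not anticipate a serious obstacle: the whole proof rests on the subspace (linearity) structure of $A$ together with the coset description of $Qb$. The only point requiring care is the bookkeeping with multivalued parts—ensuring the representative $y_0$ is chosen simultaneously in $Ax_0$ and $Qb$, and checking that subtracting and adding graph elements keeps us inside the relation $A$.
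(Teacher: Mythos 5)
Your proof is correct and follows essentially the same route as the paper: both reduce to Proposition \ref{ALSS3}'s description of the solution set as $A^{-1}P_{W,\ran A}b$ and then identify its coset structure, recentered at $x_0$, with multivalued part $A^{-1}(\mul P_{W,\ran A})=A^{-1}\ker W$. The only difference is that the paper cites general facts---the coset property of relation images applied to the product relation $A^{-1}P_{W,\ran A}$, and Lemma \ref{propbasicas} ($\mul(RT)=R(\mul T)$)---where you re-derive these by hand through double inclusions using the linearity of $A$.
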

\begin{proof} Let $x_0$ be a $W$-LSS solution of $b\in Ax$. By Proposition \ref{ALSS3}, $x_1$ is a $W$-LSS solution of $b\in Ax$ if and only if $x_0-x_1\in \mul A^{-1}(P_{W,\ran A}).$ But, by Lemma \ref{propbasicas}, $\mul A^{-1}P_{W,\ran A}=A^{-1}\mul P_{W,\ran A}=A^{-1} (\ran A \cap \ker W)=A^{-1}\ker W.$ 
\end{proof}

The above result can be used to describe the $W_1W_2-$LSS of inclusions. See also \cite{CM}.

\begin{Def} Let $W_1, W_2 \in L(\HH)^+.$ Then  $x_0 \in \HH$ is a $W_1 W_2$-LSS of $b\in Ax$ if $x_0$ is a $W_1$-LSS of $b\in Ax$ and $||x_0||_{W_2}\leq ||\tilde{x}||_{W_2}$ for every $\tilde{x}$ which is a $W_1$-LSS of $b\in Ax.$
\end{Def}

\begin{prop}  Let $b\in \dom P_{W_1,\ran A}.$ Then $x_0\in \HH$ is a $W_1W_2-$LSS of  $b\in Ax$ if and only if  $x_0\in (I-P_{W_2, A^{-1}\ker W_1})A^{-1}P_{W_1,\ran A}b.$
\end{prop}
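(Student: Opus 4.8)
The plan is to reduce the two-stage minimization defining a $W_1W_2$-LSS to a single application of the $W_2$-least squares machinery already developed for subspaces. Set $Q_1:=P_{W_1,\ran A}$ and $M:=A^{-1}\ker W_1$. Since $b\in\dom Q_1$, Propositions \ref{ALSS3} and \ref{ALSS2} identify the set of $W_1$-LSS of $b\in Ax$ as $A^{-1}Q_1 b=\tilde{x}_0+M$, where $\tilde{x}_0$ is any fixed particular $W_1$-LSS. By definition, $x_0$ is a $W_1W_2$-LSS precisely when it lies in this set and minimizes the $W_2$-seminorm over it.

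First I would fix such an $\tilde{x}_0$ and observe that, because $M$ is a subspace,
\[
\min_{x\in \tilde{x}_0+M}\Vert x\Vert_{W_2}=\min_{m\in M}\Vert \tilde{x}_0-m\Vert_{W_2},
\]
so the second stage is exactly the best $W_2$-approximation of $\tilde{x}_0$ from $M$. This is the setting of Proposition \ref{ALSS5II} applied with weight $W_2$, subspace $M$, and data vector $\tilde{x}_0$: the minimum is attained at some $m\in M$ if and only if $m\in P_{W_2,M}\tilde{x}_0$, and then an optimal vector is $x_0=\tilde{x}_0-m\in (I-P_{W_2,M})\tilde{x}_0$ (recall that $W_2^{1/2}(I-P_{W_2,M})$ is an operator by Lemma \ref{propPas}). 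Conversely, any $W_1W_2$-LSS is of the form $\tilde{x}_0-m$ with $m$ a minimizer, hence with $m\in P_{W_2,M}\tilde{x}_0$, so it lies in $(I-P_{W_2,M})\tilde{x}_0$.

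Next I would pass from $\tilde{x}_0$ to the full coset. By \eqref{pas18} one has $P_{W_2,M}=P_{M,M^{\perp_{W_2}}}$, and a direct computation gives $I-P_{W_2,M}=P_{M^{\perp_{W_2}},M}$, whose kernel is $M$. Consequently adding any element of $M=\ker(I-P_{W_2,M})$ to $\tilde{x}_0$ leaves its image under $I-P_{W_2,M}$ unchanged, whence
\[
(I-P_{W_2,M})\tilde{x}_0=(I-P_{W_2,M})(\tilde{x}_0+M)=(I-P_{W_2,M})A^{-1}Q_1 b.
\]
Combining this with the previous step yields both implications: $x_0$ is a $W_1W_2$-LSS if and only if $x_0\in(I-P_{W_2,M})A^{-1}Q_1 b$, which is the claim.

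The main obstacle is the bookkeeping around existence and domains. The minimum in the second stage is attained only when $\tilde{x}_0\in\dom P_{W_2,M}=M+M^{\perp_{W_2}}$, so I expect to verify that this is exactly the condition under which the right-hand set $(I-P_{W_2,M})A^{-1}Q_1 b$ is nonempty (equivalently $(\tilde{x}_0+M)\cap\dom P_{W_2,M}\neq\emptyset$), ensuring that both sides are empty together and the equivalence is never vacuously broken. Care is also needed because $P_{W_2,M}$ is genuinely multivalued, with $\mul P_{W_2,M}=M\cap\ker W_2$ by Lemma \ref{propPas}; the observation (already used in Proposition \ref{ALSS5II}) that $W_2^{1/2}y$ is constant across a multivalued image is what guarantees that every representative of $(I-P_{W_2,M})\tilde{x}_0$ achieves the same minimal value $\Vert x_0\Vert_{W_2}$, so that the whole set consists of $W_1W_2$-LSS.
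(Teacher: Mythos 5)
Your proposal is correct and takes essentially the same approach as the paper's proof: both identify the set of $W_1$-LSS as the coset $A^{-1}P_{W_1,\ran A}b=\tilde{x}_0+A^{-1}\ker W_1$ via Propositions \ref{ALSS3} and \ref{ALSS2}, and then apply Proposition \ref{ALSS5II} with weight $W_2$ and subspace $A^{-1}\ker W_1$ to solve the second-stage minimization. The only difference is bookkeeping: where you fix an arbitrary representative $\tilde{x}_0$ and pass to the full coset using $\ker\bigl(I-P_{W_2,A^{-1}\ker W_1}\bigr)=A^{-1}\ker W_1$, the paper chooses the representative adaptively ($x_0$ itself in the forward direction, $x_1$ in the converse), which makes that coset-invariance step unnecessary.
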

\begin{proof}
	Assume that $x_0\in \HH$ is a $W_1W_2$-LSS of the inclusion $b\in Ax$. Then $x_0$ is a $W_1$-LSS of $b \in  Ax$ and, by Proposition \ref{ALSS3}, $x_0\in  A^{-1}P_{W_1,\ran A}b.$ By Proposition \ref{ALSS2}, the set of  $W_1-$LSS is $x_0+A^{-1}\ker W_1.$ By the definition of $W_1W_2-$LSS, $x_0$ also solves $\underset{w\in A^{-1}\ker W_1}{\min}||x_0-w||_{W_2}.$ Since the minimum is attained at $x_0=x_0+0,$ by Proposition \ref{ALSS5II}, $0=P_{W_2, A^{-1}\ker W_1}x_0.$
	Therefore,  $x_0\in (I-P_{W_2, A^{-1}\ker W_1})x_0.$ So $x_0\in (I-P_{W_2, A^{-1}\ker W_1})A^{-1}P_{W_1,\ran A}b.$
	
	Conversely, let $x_0\in (I-P_{W_2, A^{-1}\ker W_1})A^{-1}P_{W_1,\ran A}b$. Then, consider $x_1\in A^{-1}P_{W_1,\ran A}b$ such that $x_0\in (I-P_{W_2, A^{-1}\ker W_1})x_1$. Thus, $x_0-x_1\in P_{W_2, A^{-1}\ker W_1}x_1$ and, by Proposition \ref{ALSS5II},  $||x_0||_{W_2}\leq ||x_1+y||_{W_2}$ for all $y\in A^{-1}\ker W_1$. Therefore, by Propositions \ref{ALSS3} and \ref{ALSS2}, $x_0$ is a $W_1W_2-$LSS of  $b\in Ax$.
\end{proof}
\subsection{Abstract splines and smoothing problems}
In this section we apply our previous study on $W$-LSS of inclusions to solve a classical interpolating problem introduced by Atteia in \cite{Atteia} and associated regularized version.

Let $T\in L(\HH, \E)$ with closed range and $V\in L(\HH, \KK)$ surjective. Given $b\in\HH,$ consider the problem:
\begin{equation}\label{l}
	\min ||Tx||, \; \ \ \text{subject to} \;  Vx=b.
\end{equation}
A$(T, V)$-abstract spline to $b$ is
any element of the set 
$$\spl(T,V,b)=\left\{y\in \HH: Vy=b,  ||Ty||=\underset{Vx=b}{\min} \ ||Tx||\right\}.$$

Since $V$ is surjective, given $\tilde{x}\in\HH$ such that $V\tilde{x}=b$, we can  write the above problem as:
\begin{center}
	$\underset{w\in\ker V}{\min} ||T(\tilde{x}-w)||.$ 
	
\end{center}
Equivalently, $\underset{Vx=b}{\min} \ ||Tx||^2=\underset{w\in\HH}{\min} \ ||T(\tilde{x}-P_{\ker V}w)||^2=\underset{w\in\HH}{\min} \ ||\tilde{x}-P_{\ker V}w||_{T^*T}^2.$ Thus, the set $\spl(T,V,b)$ is related to $T^*T-$LSS in the following fashion.

\begin{lema}\label{ALSSsplines} Let $\tilde{x}\in\HH$ be such that $V\tilde{x}=b$. Then, $y\in \spl(T,V,b)$ if and only if $y=\tilde{x}-P_{\ker V}w$ where $w$ is a $T^*T-$LSS of $\tilde{x}=P_{\ker V}x$. 
	
\end{lema}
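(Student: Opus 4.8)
The plan is to translate the spline problem directly into the language of weighted least squares solutions of an inclusion, using the reformulation already carried out in the paragraph preceding the lemma. The key observation is that, with $W := T^*T \in L(\HH)^+$ as the weight, the minimization $\min_{w \in \HH} \Vert \tilde{x} - P_{\ker V} w \Vert_{T^*T}$ is exactly a $W$-LSS problem for the inclusion $\tilde{x} = P_{\ker V} x$, where $P_{\ker V}$ is viewed as a bounded operator (hence a relation with trivial multivalued part). So the strategy is to apply Proposition \ref{ALSS3} (or its corollaries) to the operator $A := P_{\ker V}$, the weight $W := T^*T$, and the target vector $\tilde{x}$, and then carry the conclusion back through the substitution $y = \tilde{x} - P_{\ker V} w$.

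First I would record that $V \tilde{x} = b$ guarantees $\tilde{x}$ is an admissible starting point, and that the affine constraint set $\{x : Vx = b\}$ equals $\tilde{x} + \ker V$; hence minimizing $\Vert Tx \Vert$ over this set is the same as minimizing $\Vert T(\tilde{x} - w)\Vert$ over $w \in \ker V$, which in turn (since $P_{\ker V}$ maps $\HH$ onto $\ker V$ and fixes $\ker V$ pointwise) equals minimizing $\Vert \tilde{x} - P_{\ker V} w \Vert_{T^*T}$ over all $w \in \HH$. This is precisely the statement that $w$ realizes $\min_{y \in \ran P_{\ker V}} \Vert \tilde{x} - y \Vert_W$ in the sense of \eqref{ALSS}, i.e.\ $w$ is a $W$-LSS of the inclusion $\tilde{x} \in P_{\ker V} x$, which because $P_{\ker V}$ is an operator reads $\tilde{x} = P_{\ker V} x$.

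The two directions then follow by chasing definitions. For the forward implication, I take $y \in \spl(T,V,b)$; then $Vy = b$ forces $y = \tilde{x} - w_0$ with $w_0 \in \ker V$, so $w_0 = P_{\ker V} w_0$, and the optimality of $y$ translates into $w_0$ attaining the minimum above, i.e.\ $w_0$ being a $T^*T$-LSS of $\tilde{x} = P_{\ker V} x$; writing $y = \tilde{x} - P_{\ker V} w_0$ gives the claimed form. For the converse, given a $T^*T$-LSS $w$ of $\tilde{x} = P_{\ker V} x$ and setting $y := \tilde{x} - P_{\ker V} w$, one checks $V y = V\tilde{x} - V P_{\ker V} w = b$ (since $P_{\ker V} w \in \ker V$) and that the LSS property of $w$ yields $\Vert Ty \Vert = \min_{Vx = b}\Vert Tx\Vert$, so $y \in \spl(T,V,b)$.

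The main obstacle, such as it is, is purely bookkeeping rather than analytic: one must be careful that the relevant $W = T^*T$ with $\ran T$ closed is genuinely a bounded positive operator so that $\Vert \cdot \Vert_{T^*T}$ is a legitimate semi-norm and $P_{W, \ran P_{\ker V}}$ is the right object, and that replacing the free variable $x$ by $P_{\ker V} w$ does not change the infimum (it does not, because $\tilde{x} - P_{\ker V} w$ ranges over $\tilde{x} + \ker V$ as $w$ ranges over $\HH$, matching the constraint set exactly). Once this identification of the constraint set with $\ran P_{\ker V}$ is made precise, the lemma is an immediate unwinding of the definition of $W$-LSS of an inclusion together with the equality $\ran P_{\ker V} = \ker V$.
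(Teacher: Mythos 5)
Your proof is correct and is essentially the paper's own argument: the paper states this lemma without a separate proof, treating it as the immediate unwinding of the reformulation $\min_{Vx=b}\Vert Tx\Vert = \min_{w\in\HH}\Vert \tilde{x}-P_{\ker V}w\Vert_{T^*T}$ given in the paragraph before the lemma, which is exactly the substitution and definition-chasing you carry out (your invocation of Proposition~\ref{ALSS3} is not even needed, as you in fact never use it). The only point worth noting is the one you already handle: since $A=P_{\ker V}$ is an everywhere-defined operator, $Ax_0$ is a singleton, so the $W$-LSS condition for the inclusion reduces to $P_{\ker V}w$ attaining the minimum over $\ran P_{\ker V}=\ker V$.
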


In \cite{CMS}, the existence of splines for all $\xi\in\HH$ is characterized in terms of the existence of $T^*T$-self adjoint projections when $T^*T$ is $\ker V$-complementable. Here, we propose a simpler characterization of $\spl(T,V,b)$ by means of multivalued projections.  Compare the following result with \cite[Theorem 3.2]{CMS}.
\begin{prop}\label{ALSS1} \label{2} Let $\tilde{x}\in\HH$ be such that $V\tilde{x}=b$. Then, $\spl(T, V, b)$ is not empty if and only if $\tilde{x}\in\dom P_{T^*T,\ker V}.$ If $\tilde{x}\in \dom P_{T^*T,\ker V}$ then
	\begin{equation}
		\spl(T,V,b)=(I- P_{T^*T,\ker V})\tilde{x}.
	\end{equation}
\end{prop}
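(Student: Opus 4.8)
The plan is to reduce the abstract spline problem to the $W$-LSS framework already developed, with $W=T^*T$ and $\St=\ker V$, and then apply Propositions \ref{ALSS5} and \ref{ALSS3} together with Lemma \ref{ALSSsplines}. First I would invoke Lemma \ref{ALSSsplines} to translate membership in $\spl(T,V,b)$ into a statement about $T^*T$-LSS: namely $y \in \spl(T,V,b)$ if and only if $y=\tilde{x}-P_{\ker V}w$ where $w$ is a $T^*T$-LSS of the inclusion $\tilde{x} \in P_{\ker V}x$ (here $P_{\ker V}$ is regarded as the operator/relation whose range is $\ker V$). Observe that $\ran P_{\ker V}=\ker V$, so the relevant multivalued projection is exactly $P_{T^*T,\ker V}$.

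For the existence claim, I would apply Proposition \ref{ALSS5} with $A=P_{\ker V}$ and $W=T^*T$: there is a $T^*T$-LSS of $\tilde{x}\in P_{\ker V}x$ if and only if $\tilde{x}\in\dom P_{T^*T,\ran P_{\ker V}}=\dom P_{T^*T,\ker V}$. Combined with Lemma \ref{ALSSsplines}, this gives that $\spl(T,V,b)\neq\emptyset$ exactly when $\tilde{x}\in\dom P_{T^*T,\ker V}$, which is the first assertion.

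For the formula, assuming $\tilde{x}\in\dom P_{T^*T,\ker V}$, I would use Proposition \ref{ALSS3} (item $iii$) to describe the set of $T^*T$-LSS of $\tilde{x}\in P_{\ker V}x$ as $P_{\ker V}^{-1}P_{T^*T,\ker V}\tilde{x}$. Applying Lemma \ref{ALSSsplines}, the spline set is $\{\tilde{x}-P_{\ker V}w : w \in P_{\ker V}^{-1}P_{T^*T,\ker V}\tilde{x}\}$. The key computation is that $P_{\ker V}w$ runs exactly over $P_{T^*T,\ker V}\tilde{x}$ as $w$ ranges over $P_{\ker V}^{-1}P_{T^*T,\ker V}\tilde{x}$: indeed $w \in P_{\ker V}^{-1}(P_{T^*T,\ker V}\tilde{x})$ means precisely that $P_{\ker V}w \in P_{T^*T,\ker V}\tilde{x}$ (since $P_{\ker V}$ is an everywhere-defined projection onto $\ker V$). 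Hence $\tilde{x}-P_{\ker V}w$ ranges over $\tilde{x}-P_{T^*T,\ker V}\tilde{x}=(I-P_{T^*T,\ker V})\tilde{x}$, giving the desired equality.

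The step I expect to require the most care is the identification $\ran P_{\ker V}=\ker V$ and the bookkeeping with $P_{\ker V}$ as a relation, ensuring that $A=P_{\ker V}$ legitimately satisfies the hypotheses of Propositions \ref{ALSS5} and \ref{ALSS3} (in particular that $Ax=P_{\ker V}x$ is the expected object so that the inclusion $\tilde{x}\in Ax$ matches the minimization $\min_{w\in\ker V}\|T(\tilde{x}-w)\|$). The subtlety is purely that the minimization is over the translate and that $P_{\ker V}$ may add its own multivalued part trivially; once the dictionary $W\leftrightarrow T^*T$, $A\leftrightarrow P_{\ker V}$, $\St\leftrightarrow\ker V$ is set up correctly, both assertions follow directly from the cited results and the computation $\tilde{x}-P_{T^*T,\ker V}\tilde{x}=(I-P_{T^*T,\ker V})\tilde{x}$.
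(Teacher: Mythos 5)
Your proposal is correct and follows essentially the same route as the paper: the paper's proof likewise reduces to the $W$-LSS framework via Lemma \ref{ALSSsplines} with $W=T^*T$, $A=P_{\ker V}$, and invokes Propositions \ref{ALSS5} and \ref{ALSS3}; your write-up actually makes explicit the set computation $\{\tilde{x}-P_{\ker V}w : P_{\ker V}w\in P_{T^*T,\ker V}\tilde{x}\}=(I-P_{T^*T,\ker V})\tilde{x}$ that the paper leaves implicit. The only thing the paper adds beyond your argument is a supplementary remark (via Proposition \ref{PyI-P}) decomposing $I-P_{T^*T,\ker V}$ into its operator part plus multivalued part, which is extra structural information rather than a needed step.
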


\begin{proof}

	The proof of the first part follows by Lemma \ref{ALSSsplines} and Propositions \ref{ALSS5} and	\ref{ALSS3}.	Finally,  $I-P_{T^*T,\ker V}$ is decomposable because $\ker V$ and $(T^*T\ker V)^\bot$ are closed subspaces. Moreover, by Proposition \ref{PyI-P},   $I-P_{T^*T,\ker V}=P_0\hat{\oplus}(\{0\}\times \mul P_{T^*T,\ker V})$ where $P_0:=P_{(T^*T\ker V)^\bot\ominus \mul P_{T^*T,\ker V} \pl \\ \ker V}.$ 
\end{proof}

The \emph{regularized problem} associated to (\ref{l}), known as the \emph{smoothing problem} (or Tikhonov regularization) \cite{Tikhonov} is
\begin{equation}\label{t}
	\underset{x\in\HH}{\min}\left(||Tx||^2+\rho ||Vx-b||^2\right)^{1/2},
\end{equation}
where $\rho>0$ is a  parameter.

We follow the notation used in \cite{Hassi} and given $A \in L(\mathcal E, \HH)$ and $B \in L(\mathcal E, \KK),$ we write
$$L(A,B):=\{(Ax,Bx) : x \in \mathcal E\}.$$

Thus, $L(A,B)=\ran \begin{pmatrix} A \\ B
\end{pmatrix}$ is a range operator and $L(A,B)=BA^{-1}$. That is, in the sense of the product of linear relations, $L(A,B)$ is a \emph{quotient}.

For each fixed $\rho,$ define the inner product in $\E\times\HH$
$$\left\langle (y,z), (y',z')\right\rangle_\rho:=\left\langle y,y'\right\rangle_{\E}+\rho \left\langle z,z'\right\rangle_{\HH},$$
$y,y'\in\E, z,z'\in\HH,$ and consider the associated norm $\Vert (y,z) \Vert_\rho.$ 
Then
\begin{eqnarray}
	\underset{x\in\HH}{\min}\left(||Tx||^2+\rho ||Vx-b||^2\right)^{1/2}&=&\underset{x\in\HH}{\min}||(Tx,Vx)-(0,b)||_{\rho}\nonumber\\
	&=&d_\rho(VT^{-1}, (0,b)).\nonumber
\end{eqnarray}

If  $VT^{-1}$ is closed, then
$$d_\rho(VT^{-1}, (0,b))=||(I-P_{VT^{-1}})(0,b)||_{\rho}.$$

But $VT^{-1}=\ran \begin{pmatrix} T \\ V\end{pmatrix} $ is closed if and only if $\ran \begin{pmatrix} T^* & V^*\end{pmatrix}=\ran T^*+\ran V^*$ is closed if and only if  $\ker T+\ker V$ is closed. Or equivalently,  $\begin{pmatrix}
	T^* & V^*
\end{pmatrix} \begin{pmatrix}
	T\\ V
\end{pmatrix}=T^*T+V^*V$ has closed range \cite{Hassi}.

Following the ideas in the proof of \cite[Lemma 3.6]{Hassi}, consider $T':=T((T^*T+V^*V)^{1/2})^\dagger$ and $V':=V((T^*T+V^*V)^{1/2})^\dagger$, where $A^\dagger$ denotes the Moore-Penrose inverse of the operator $A$ and $A^\dagger$ is bounded if and only if $A$ has closed range. Then,  $VT^{-1}=V'T'^{-1}$ and  $T'^*T'+V'^*V'=P_{\ran(T^*T+V^*V)}$ and, by \cite[Lemma 3.4]{Hassi}, the orthogonal projection $P_{VT^{-1}}$ is given by
\begin{eqnarray}\label{m}
	P_{VT^{-1}}&=&\begin{pmatrix} T'T'^* & T'V'^* \\ V'T'^* & V'V'^*\end{pmatrix}\nonumber\\
	&=&\begin{pmatrix} T(T^*T+V^*V)^\dagger T^* & T(T^*T+V^*V)^\dagger V^* \\V(T^*T+V^*V)^\dagger T^* & V(T^*T+V^*V)^\dagger V^*\end{pmatrix}.
\end{eqnarray}
This proves the following.
\begin{prop}
	Let $T\in L(\HH,\E)$ with closed range and $V\in L(\HH,\KK)$ surjective. If $\ker V+\ker T$ is closed then the minimum in (\ref{t}) is given by
	$$||(I-P_{VT^{-1}})(0,b)||_{\rho},$$
	where $P_{VT^{-1}}$ is given by (\ref{m}).
\end{prop}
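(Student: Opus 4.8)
The plan is to recast the Tikhonov minimization \eqref{t} as an orthogonal–distance computation in the product space $\E\times\KK$ equipped with the inner product $\langle\cdot,\cdot\rangle_\rho$, to use the hypothesis to make the relevant subspace closed, and then to read off the minimum from the orthogonal projection onto it. Indeed, this is exactly the content of the computation preceding the statement (``This proves the following''), so the task is to assemble those four ingredients into a proof. Concretely, since $t\mapsto t^{1/2}$ is monotone, minimizing the objective in \eqref{t} is the same as minimizing its square, and for every $x\in\HH$
$$
\|Tx\|^2+\rho\,\|Vx-b\|^2=\|(Tx,Vx)-(0,b)\|_\rho^2 .
$$
As $x$ ranges over $\HH$ the points $(Tx,Vx)$ sweep out exactly $VT^{-1}=\ran\begin{pmatrix}T\\V\end{pmatrix}\subseteq\E\times\KK$, so the minimum in \eqref{t} equals $d_\rho(VT^{-1},(0,b))$, the $\rho$–distance from $(0,b)$ to this subspace.

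Next I would use the standing assumption to guarantee that $VT^{-1}$ is \emph{closed}, which is what lets the distance be realized by an orthogonal projection. Here I invoke the chain of equivalences already recorded: $\ran\begin{pmatrix}T\\V\end{pmatrix}$ is closed iff its adjoint range $\ran\begin{pmatrix}T^*&V^*\end{pmatrix}=\ran T^*+\ran V^*$ is closed, and by Proposition \ref{Deutch} applied to the closed subspaces $\ker T$ and $\ker V$ this is in turn equivalent to the closedness of $\ker T+\ker V$ (cf. \cite{Hassi}). Once $VT^{-1}$ is a closed subspace of the Hilbert space $\E\times\KK$, the projection theorem gives $d_\rho(VT^{-1},(0,b))=\|(I-P_{VT^{-1}})(0,b)\|_\rho$, with $P_{VT^{-1}}$ the orthogonal projection onto $VT^{-1}$.

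Finally I would produce the explicit matrix \eqref{m} for $P_{VT^{-1}}$ following \cite[Lemma 3.4]{Hassi}: right–multiplying $\begin{pmatrix}T\\V\end{pmatrix}$ by $\big((T^*T+V^*V)^{1/2}\big)^\dagger$ yields $\begin{pmatrix}T'\\V'\end{pmatrix}$ with unchanged range (so $VT^{-1}=V'T'^{-1}$, because $\ran\big((T^*T+V^*V)^{1/2}\big)^\dagger=(\ker T\cap\ker V)^\perp$) and with $T'^*T'+V'^*V'=P_{\ran(T^*T+V^*V)}$. Thus $\begin{pmatrix}T'\\V'\end{pmatrix}$ is a partial isometry, and the projection onto its range is the outer product $\begin{pmatrix}T'\\V'\end{pmatrix}\begin{pmatrix}T'^*&V'^*\end{pmatrix}$, which simplifies to \eqref{m}. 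Combining the four steps proves the proposition.

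The step demanding the most care is the bookkeeping of inner products in this last stage. The distance is measured in $\langle\cdot,\cdot\rangle_\rho$, whereas the partial–isometry normalization and the outer–product formula producing \eqref{m} are carried out with respect to the standard inner product of $\E\times\KK$. One must therefore confirm that the projection appearing in the final identity is genuinely the orthogonal projection for the inner product in which the distance is computed, so that $d_\rho(VT^{-1},(0,b))=\|(I-P_{VT^{-1}})(0,b)\|_\rho$ holds with the $P_{VT^{-1}}$ of \eqref{m}; this is where the argument is most delicate and is the point I would check most carefully.
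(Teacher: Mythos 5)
Your four steps are, almost verbatim, the computation the paper itself presents before the words ``This proves the following'', so your proposal reconstructs the paper's own argument. However, the point you flag in your last paragraph and leave as ``to be checked'' is not a removable technicality: it is a genuine gap, and the check fails. The matrix \eqref{m}, obtained from \cite[Lemma 3.4]{Hassi} applied to the partial isometry $\left(\begin{smallmatrix} T' \\ V' \end{smallmatrix}\right)$, is the orthogonal projection onto $VT^{-1}$ for the \emph{standard} inner product of $\E\times\KK$, whereas the identity $d_\rho(VT^{-1},(0,b))=\Vert (I-P_{VT^{-1}})(0,b)\Vert_\rho$ requires the idempotent whose kernel is the $\langle\cdot,\cdot\rangle_\rho$-orthogonal companion of $VT^{-1}$; for $\rho\neq 1$ these two projections differ unless the off-diagonal blocks of \eqref{m} vanish. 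A one-dimensional example makes this concrete: take $\HH=\E=\KK=\mathbb{C}$, $T=V=I$, $b\neq 0$. The minimum of $\left(|x|^2+\rho|x-b|^2\right)^{1/2}$ is $|b|\sqrt{\rho/(1+\rho)}$, attained at $x=\rho b/(1+\rho)$, while \eqref{m} gives $P_{VT^{-1}}=\frac{1}{2}\left(\begin{smallmatrix}1&1\\1&1\end{smallmatrix}\right)$ and
$$\Vert (I-P_{VT^{-1}})(0,b)\Vert_\rho=\tfrac{|b|}{2}\sqrt{1+\rho}\,;$$
the two agree only when $\rho=1$. So the final step of your proof (and of the paper's) cannot be completed as stated: the proposition with the projection \eqref{m} is false for $\rho\neq 1$.

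The first three steps are sound: the reformulation of \eqref{t} as $d_\rho(VT^{-1},(0,b))$, the closedness of $VT^{-1}$ under the hypothesis via Proposition \ref{Deutch}, and the projection theorem in the Hilbert space $\left(\E\times\KK,\langle\cdot,\cdot\rangle_\rho\right)$. To finish correctly, eliminate the weight \emph{before} invoking \cite[Lemma 3.4]{Hassi}: since $\Vert Tx\Vert^2+\rho\Vert Vx-b\Vert^2=\Vert(Tx,\sqrt{\rho}\,Vx)-(0,\sqrt{\rho}\,b)\Vert^2$ in the standard norm, run your argument for the pair $(T,\sqrt{\rho}\,V)$ and the vector $(0,\sqrt{\rho}\,b)$; the closedness hypothesis is unaffected because $\ker(\sqrt{\rho}\,V)=\ker V$. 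This shows the minimum in \eqref{t} equals $\Vert (I-Q_\rho)(0,\sqrt{\rho}\,b)\Vert$ with
$$Q_\rho=\begin{pmatrix} T(T^*T+\rho V^*V)^\dagger T^* & \sqrt{\rho}\,T(T^*T+\rho V^*V)^\dagger V^* \\ \sqrt{\rho}\,V(T^*T+\rho V^*V)^\dagger T^* & \rho\, V(T^*T+\rho V^*V)^\dagger V^*\end{pmatrix};$$
equivalently, the minimum is $\Vert (I-Q)(0,b)\Vert_\rho$ where $Q=\left(\begin{smallmatrix} T \\ V \end{smallmatrix}\right)(T^*T+\rho V^*V)^\dagger\begin{pmatrix} T^* & \rho V^* \end{pmatrix}$ is the genuine $\rho$-orthogonal projection onto $VT^{-1}$. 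In short, your instinct about where the argument is delicate was exactly right, but the delicate step does not go through: the statement itself needs the $\rho$-corrected projection.
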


\subsection*{Acknowledgements}
M.~L.~Arias was supported in part by FonCyT (PICT 2017-0883) and UBACyT (20020190100330BA). M.~Contino was supported by María Zambrano Postdoctoral Grant CT33/21 at Universidad Complutense de Madrid financed by the Ministry of Universities with Next Generation EU funds.
A. ~Maestripieri was supported in part by the Interdisciplinary Center for Applied Mathematics at Virginia Tech.
M.~Contino, A.~Maestripieri  and S.~Marcantognini were
supported by CONICET PIP 11220200102127CO.  

\bibliographystyle{amsplain}

\end{document}